\newtheorem{theorem}{Theorem}[section]
\newtheorem{lemma}[theorem]{Lemma}
\newtheorem{proposition}[theorem]{Proposition}
\newtheorem{corollary}[theorem]{Corollary}
\newtheorem*{assumption}{Assumption}
\newtheorem{question}[theorem]{Question}
\theoremstyle{remark}
\newtheorem{remark}[theorem]{Remark}
\newtheorem{definition}[theorem]{Definition}
\numberwithin{equation}{section}
\newcommand {\R} {\mathbb{R}}
\newcommand {\E} {\mathbb{E}}
\newcommand {\N} {\mathbb{N}}
\newcommand {\Z} {\mathbb{Z}}
\renewcommand{\P} {\mathbb{P}}
\newcommand {\Var} {\mathrm{Var}}
\newcommand {\Vol} {\mathrm{Vol}}
\newcommand {\EC} {\mathrm{EC}}
\newcommand {\diam} {\mathrm{diam}}
\newcommand {\Cov} {\mathrm{Cov}}
\newcommand {\dist} {\mathrm{dist}}
\newcommand{\ind}{\mathds{1}}
\newenvironment{namedassumption}[2]{
  \begin{assumption}[\textbf{#1}]\label{#2}}{\end{assumption}}
\newcommand{\aref}[1]{\hyperref[#1]{\textbf{(\nameref*{#1})}}}
\begin{document}
\title[]{Limit theorems for non-local functionals of smooth Gaussian fields via quasi-association}
\author{Michael McAuley\textsuperscript{1}}
\address{\textsuperscript{1}School of Mathematics and Statistics, Technological University Dublin}
\email{m.mcauley@cantab.net}
\subjclass[2020]{60G60, 60G15, 60K35}
\keywords{Gaussian fields, excursion sets, level sets, limit theorems, percolation} 
\begin{abstract}
Many classical objects of study related to the geometry/topology of smooth Gaussian fields (e.g.\ the volume, surface area or Euler characteristic of excursion sets) have a `locality' property which is crucial to their analysis. More recently, progress has been made in studying `non-local' quantities of such fields (e.g.\ the component/nodal count or the Betti numbers of excursion sets). In this work, we develop a new approach to analysing non-local functionals based on a form of topological quasi-association. We use this to establish a variety of limit theorems for approximately additive functionals on growing domains, including concentration bounds, a quantitative central limit theorem and the law of the iterated logarithm.
\end{abstract}
\date{\today}
\thanks{}

\maketitle
\section{Introduction}\label{s:intro}
\subsection{Motivation}
Smooth Gaussian fields are widely used across science for modelling spatial phenomena. For instance, such fields are used in cosmology to describe the cosmic microwave background radiation \cite{pc14} or the distribution of galaxies \cite{bbk86}, in quantum chaos to understand high-energy quantum wavefunctions \cite{ber77} and to model the noise process in medical imaging \cite{wor96}. Within mathematics, smooth Gaussian fields have been used to describe `typical' real algebraic varieties as a way of addressing Hilbert's 16th problem \cite{ll15}, whilst there has also been significant progress in recent years in understanding the percolation behaviour (i.e., large scale topology) of level sets of Gaussian fields \cite{bel23}.

In many of these applications, one is naturally interested in geometric/topological properties of the field (such as the volume or number of connected components of the level sets of the field). A rich theory has developed to describe the statistics of geometric/topological functionals of Gaussian fields. This includes tools like the Kac-Rice formula for computing moments \cite{bll22} and the Malliavin-Stein method for establishing quantitative distributional limit theorems \cite{np12}. Such tools rely heavily on the functionals being \emph{local}, in the sense that they are determined by summing infinitesimal contributions. More precisely, given a $C^2$-smooth Gaussian field $f$ defined on $\R^d$, one can describe a functional $\Phi$ as local if it can be expressed as
\[
    \Phi(D,f)=\int_D\varphi(f(x),\nabla f(x),\nabla^2 f(x))\;\mu(dx),\qquad\text{for compact }D\subset\R^d,
\]
for some real-valued function $\varphi$ and some measure $\mu$. For example, the volume of level sets can be expressed in this form by choosing $\varphi(f(x),\nabla f(x),\nabla^2f(x))=\ind_{f(x)=\ell}$ and $\mu$ to be $(d-1)$-dimensional Hausdorff measure. Many geometric functionals are local, whereas most topological functionals are non-local. (One notable exception is the Euler characteristic of excursion sets, which is local by the Poincar\'e-Hopf theorem.)

In this work, we will mainly be interested in non-local functionals which are somehow `approximately additive' over disjoint domains, since such functionals could be expected to behave analogously to local functionals. For example, the number of connected components of the level set of a Gaussian field would be additive over domains if the components intersecting the boundary could be neglected. Whilst progress has been made recently in studying such non-local functionals mathematically, there is still a large gap between what is known in the two cases. This is particularly dissatisfying, given that the known results all indicate that approximately additive, non-local functionals do in fact exhibit similar behaviour to their local counterparts (more detail will be given in the next subsection). It would therefore be desirable to have a unified framework which can analyse both classes of functional.

In an effort to address this gap, we develop a new method for studying non-local functionals of Gaussian fields on Euclidean space, which we use to establish a variety of limit theorems. Our approach uses a covariance formula for topological events, developed in \cite{bmr20}, to show \emph{quasi-association} of the functionals. This property has been widely used to establish limit theorems for sums of discrete random fields and many of the relevant arguments can be adapted to our setting. Whilst our method does not immediately apply to geometric functionals, it could in principle be generalised to this case. If so, this may lead to a unified framework for local and (approximately additive) non-local functionals, at least for weakly dependent Gaussian fields. We discuss this further in Section~\ref{ss:discussion}, after presenting our results.

\subsection{Related literature}
To provide context for our work, we will briefly summarise some of the literature on geometric/topological functionals of Gaussian fields, focusing on results which are most closely related to our own. We are interested in the following setting: let $f:\R^d\to\R$ be a stationary, Gaussian field which is $C^2$-smooth almost surely. We assume that $f$ is normalised so that $\E[f(x)]=0$ and $\Var[f(x)]=1$ at any point $x$, and we let $K(x-y):=\E[f(x)f(y)]$ denote the covariance function of $f$. Given $\ell\in\R$, we define the \emph{excursion sets} and \emph{level sets} of $f$ (at level $\ell$) respectively as
\[
    \{f\geq\ell\}:=\big\{x\in\R^d\;\big|\;f(x)\geq\ell\big\}\qquad\text{and}\qquad\{f=\ell\}:=\big\{x\in\R^d\;\big|\;f(x)=\ell\big\}.
\]
Topological functionals of $f$ are usually defined in terms of these sets (restricted to some compact domain).

\underline{Law of large numbers:} Nazarov and Sodin \cite{ns16} established one of the seminal results in this area: a law of large numbers for the `component count'. Specifically they showed that, under some minimal non-degeneracy and covariance decay assumptions on $f$, if $\Phi(\Lambda_R,\ell)$ denotes the number of connected components of $\{f=\ell\}$ in the domain $\Lambda_R:=[-R/2,R/2]^d$ then there exists some constant $c>0$ such that
\begin{equation}\label{e:ns_lln}
    \frac{\Phi(\Lambda_R,\ell)}{R^d}\to c
\end{equation}
almost surely and in $L^1$, as $R\to\infty$. (Although the result was stated only for the level set at $\ell=0$, the proof holds verbatim for excursion sets and arbitrary levels.) This result was subsequently generalised to the number of components with a given volume \cite{bw18} or a given diffeomorphism type \cite{sw19} as well as to the (higher order) Betti numbers of level sets \cite{wig21}. Further properties of the limiting constant in \eqref{e:ns_lln} have also been established \cite{kw18,bmm20,bmm20a}.

\underline{Variance asymptotics:} Subsequent work has studied the variance of the component count. The first significant result \cite{ns20} established a polynomial lower bound on the variance (with a small exponent) under minimal assumptions on the field. Whilst the result was proven for ensembles of Gaussian fields on the sphere, most parts of the proof should extend to our Euclidean setting. It was later shown \cite{bmm24b} that for fields with integrable covariance function, there exists $\sigma^2(\ell)\geq 0$ such that
\[
    \frac{\Var[\Phi(\Lambda_R,\ell)]}{R^d}\to\sigma^2(\ell)
\]
as $R\to\infty$. For fields with slower covariance decay (i.e., long-range dependence), precise asymptotics have not been established, but lower and upper bounds are known. Specifically if $K(x)\sim\lvert x\rvert^{-\alpha}$ for some $\alpha\in(0,d)$ as $\lvert x\rvert\to\infty$ and some additional regularity assumptions hold, then \cite{bmm22,bmm24b} there exist $c_2>c_1>0$ such that
\[
    c_1R^{2d-\alpha}\leq\Var[\Phi(\Lambda_R,\ell)]\leq c_2R^{2d-\alpha}
\]
for all $R\geq 1$. The upper bound here is valid for all $\ell$, while the lower bound requires some condition which should hold for generic $\ell$.

\underline{Distributional limits:} A central limit theorem for the component count has been established \cite{bmm24} under slightly stronger assumptions: if $f$ is $C^\infty$-smooth almost surely, $\sup_{\alpha\leq 2}\lvert \partial^\alpha K(x)\rvert\leq C(1+\lvert x\rvert)^{-3d-\epsilon}$ for all $x\in\R^d$ and some $C,\epsilon>0$ and some additional regularity conditions hold, then
\[
    \frac{\Phi(\Lambda_R,\ell)-\E[\Phi(\Lambda_R,\ell)]}{R^{d/2}}\overset{d}{\to}\sigma(\ell)Z
\]
where $Z\sim\mathcal{N}(0,1)$ and $\sigma(\ell)>0$. This result was proven using a martingale argument which has since been generalised to prove distributional limits for non-local functionals with a mixed geometric and topological character \cite{mca26} as well as to Betti numbers of excursion sets \cite{hlr24}.

More recently, the analogue of the component count for a discrete Gaussian field $f:\Z^d\to\R$ with long-range dependence has been studied using the Wiener chaos expansion \cite{mm25}. This work proved variance asymptotics and distributional limit theorems, both of which depend on the underlying level of the excursion/level set due to a cancellation phenomenon. In the context of local fuctionals, this phenomenon is known as `Berry cancellation' \cite{ber02} and has been thoroughly studied \cite{wig09,kkw13,npr19,mrw20}. Striking results related to Berry cancellation have recently been established for a variety of local and non-local functionals of continuous fields \cite{gmp25}.

\underline{Concentration bounds:} One of the earliest results for a non-local functional of a Gaussian field showed that the normalised component count for random Laplace eigenfunctions on the sphere concentrates exponentially about its mean \cite{ns09}. This was subsequently adapted to fields on Euclidean space and the torus \cite{pri20}, although both works rely heavily on specific properties of Laplace eigenfunctions. For general Gaussian fields, somewhat weaker lower concentration bounds were proven \cite{rv19,bmr20}. Specifically, if $K(x)=O(\lvert x\rvert^{-\beta})$ as $\lvert x\rvert\to\infty$ then for any $\epsilon,\delta>0$
\[
    \P\big(\overline{\Phi}(\Lambda_R,\ell)<-\epsilon R^d\big)=O(R^{2d-\beta+\delta})\qquad\text{as }R\to\infty,
\]
where $\overline{X}:=X-\E[X]$. While if $K(x)=O(e^{-c\lvert x\rvert^\alpha})$ for some $c,\alpha>0$ as $\lvert x\rvert\to\infty$ then for some $c^\prime>0$,
\[
    \P\big(\overline{\Phi}(\Lambda_R,\ell)<-\epsilon R^d\big)=O\big(\exp\big(-c^\prime R^{d\alpha/(d+\alpha)}\big)\big)\qquad\text{as }R\to\infty.
\]

\underline{Euler characteristic:} Although our main interest lies in non-local functionals, we also derive results for the Euler characteristic of Gaussian field excursion sets. We recall that the Euler characteristic is an integer-valued topological invariant defined on a wide class of sets in Euclidean space. In particular for a two-dimensional set, the Euler characteristic is equal to the number of connected components minus the number of `holes' and analogous interpretations exist in higher dimensions. For further details, and a robust theoretical presentation in the context of Gaussian fields, we refer the reader to \cite{at07}.

The Poincar\'e-Hopf theorem implies that the Euler characteristic of an excursion set is in fact a local functional, rendering it amenable to study using tools such as the Kac-Rice theorem and Wiener chaos expansion. The latter was used in \cite{el16} to derive a central limit theorem for the Euler characteristic of a stationary Gaussian excursion set assuming that the covariance function is integrable. Further results, including a quantitative central limit theorem, have been established for two-dimensional fields whose realisations are eigenfunctions of the Laplacian \cite{cmw16,cm18,bst24}.

\section{Main results}
We now describe our main results. Given a stationary Gaussian field $f$ with covariance function $K$, we define the \emph{spectral measure} $\nu$ to be the symmetric measure (i.e., $\nu(A)=\nu(-A)$) satisfying
\[
    \Cov[f(x)f(y)]=K(x-y)=\int_{\R^d}e^{it\cdot(x-y)}\;d\nu(t)
\]
for all $x,y\in\R^d$. We will require the following baseline assumption throughout our work:
\begin{namedassumption}{Basic}{a:basic}
    For $d\geq 2$, let $f:\R^d\to\R$ be a stationary Gaussian field with covariance kernel $K$ such that
    \begin{enumerate}
        \item $\E[f(0)]=0$ and $\Var[f(0)]=1$,
        \item $f$ is $C^4$-smooth almost surely,
        \item $\sup_{\lvert\alpha\rvert\leq 2}\lvert\partial^\alpha K(x)\rvert\to 0$ as $\lvert x\rvert\to\infty$,
        \item the support of the spectral measure of $f$ contains an open set.
    \end{enumerate}
\end{namedassumption}

\subsection{Topological functionals: qualitative limit theorems}
Our first class of results concerns the qualitative asymptotic behaviour of topological functionals on large domains. These will require (different levels of) control over the decay of the covariance function, which we now specify:

For $x\in\R^d$ we define $\widetilde{K}(x)=\sup_{\lvert y-x\rvert\leq 1}\lvert K(y)\rvert$.

\begin{namedassumption}{CovDecay-Weak}{a:cov_decay_weak}
    For some $\epsilon>0$, as $R\to\infty$
    \[
        \int_{\Lambda_R^2}\widetilde{K}(x-y)\;dxdy=O(R^{2d}(\log R)^{-1-\epsilon}).
    \]
\end{namedassumption}
\begin{namedassumption}{CovDecay-Int}{a:cov_decay_int}
    The function $\widetilde{K}$ is integrable.
\end{namedassumption}
Note that a sufficient condition for these assumptions would be that $\lvert K(x)\rvert\leq C(1+\lvert x\rvert)^{-\beta}$ for all $x\in \R^d$, some $C>0$ and some $\beta>0$ or $\beta>d$ respectively. We also remark that Assumption~\aref{a:cov_decay_int} implies that the spectral measure is absolutely continuous with respect to Lebesgue measure, and so verifies the fourth part of Assumption~\aref{a:basic}.

We now specify the class of topological functionals to which our results will apply. We say that a function $g:\R^d\to\R$ is \emph{Morse} if it is $C^2$ smooth and all of its critical points are non-degenerate (i.e., the Hessian matrix has full rank at each critical point). Given $\ell\in\R$, we define $\mathrm{Morse}(\R^d,\ell)$ to be the set of Morse functions for which $\ell$ is not a critical value.
\begin{definition}[Bounded excursion/level-set functional]
    Let $\Phi$ be a real-valued function defined on triples $(g,D,\ell)$ where $\ell\in\R$, $g\in\mathrm{Morse}(\R^d,\ell)$ and $D\subset\R^d$ is a `nice' compact set (e.g., a box). We say that $\Phi$ is a \emph{bounded excursion-set functional} if it can be expressed as
    \begin{equation}\label{e:es_functional}
        \Phi(D,g):=\Phi(D,g,\ell)=\sum_{E\in\mathrm{Comp}(\{g\geq\ell\},D)}\varphi(E)
    \end{equation}
    where $\mathrm{Comp}(\{g\geq\ell\},D)$ denotes the set of connected components of $\{g\geq\ell\}$ which intersect $D$ but not $\partial D$ and $\varphi$ is a bounded function depending only on the topology of $E$ (i.e., it is invariant under isotopies of $E$).

    We say that $\Phi$ is a \emph{bounded level-set functional} if it can be represented as above with $\{g=\ell\}$ replacing $\{g\geq\ell\}$ and we use the phrase bounded excursion/level-set functional to denote a functional of either type.
\end{definition}

We give a more rigorous definition in Section~\ref{s:quasi-association}. For now, we simply observe that the component count and the various `diffeomorphism counts' (i.e., the number of excursion/level set components of a particular diffeomorphism type inside a domain) are examples of bounded excursion/level-set functionals. However the higher order Betti numbers of the excursion/level-set do not fall into this class, because the contribution from an individual component may be unbounded.

We will consider one other example of a topological functional in our results: the Euler characteristic of the excursion set. In this case we will take $\Phi(D,g,\ell)$ to denote the Euler characteristic of $\{g\geq\ell\}\cap D$ (i.e., we consider the whole excursion set in $D$ and not just the components in the interior), to match the convention in previous literature. For a precise definition of the Euler characteristic in this setting, as well as a proof that it is well defined for Gaussian field excursion sets, we refer the reader to \cite[Chapters 6 and 11]{at07}. 
Throughout the remainder of this section, $\Phi$ will denote either an arbitrary excursion/level-set functional or the Euler characteristic.

Our first main result is a law of large numbers for topological functionals:

\begin{theorem}\label{t:lln}
    Let $f$ satisfy Assumption~\aref{a:basic} and $\ell\in\R$ and suppose that either:
    \begin{enumerate}
        \item Assumption~\aref{a:cov_decay_weak} holds, or
        \item $\Phi$ is the Euler characteristic.
    \end{enumerate}
    In the first case, there exists $c=c(\ell)\in\R$ such that, as $R\to\infty$
    \[
        \frac{\Phi(\Lambda_R,f)}{R^d}\longrightarrow c
    \]
    almost surely and in $L^2$. In the second case, the same conclusion holds with almost sure convergence restricted to integer values of $R$.
\end{theorem}

The result for the Euler characteristic follows fairly directly from the ergodic theorem (so is included for completeness rather than novelty). The statement for excursion/level-set functionals would follow (under slightly weaker conditions) from the ergodic argument of Nazarov-Sodin in \cite{ns16}, however we use a very different method of proof. Moreover our method could likely be generalised to deal with non-stationary fields (see Remark~\ref{r:lln_non-stationary}).

For fields with stronger covariance decay, we are able to characterise the asymptotic behaviour of the variance of topological functionals and establish distributional/almost sure central limit theorems:

\begin{theorem}\label{t:var+clt}
    Let $f$ satisfy Assumptions~\aref{a:basic} and~\aref{a:cov_decay_int}, $\ell\in\R$ and $Z\sim\mathcal{N}(0,1)$.
    \begin{enumerate}
        \item (Variance asymptotics) As $R\to\infty$
        \begin{equation}\label{e:sigma}
            \frac{\Var[\Phi(\Lambda_R,f)]}{R^d}\longrightarrow\sigma^2:=\int_{\R^d}K(x)\int_0^1\mu^t_{x,0}(d_x\Phi_\infty d_0^t\Phi_\infty)\;dtdx
        \end{equation}
        where the latter integrand is defined in Section~\ref{s:quasi-association}.
        \item (Distributional CLT) As $R\to\infty$
        \[
            \widetilde{\Phi}_R:=\frac{\Phi(\Lambda_R,f)-\E[\Phi(\Lambda_R,f)]}{R^{d/2}}\overset{d}{\longrightarrow}\sigma Z.
        \]
        \item (Almost sure CLT) For any Lipschitz function $F:\R\to\R$, as $R\to\infty$
        \[
            \frac{1}{\log(R)}\int_1^Rr^{-1}F\big(\widetilde{\Phi}_r\big)\;dr\overset{a.s.}{\longrightarrow}\E[F(\sigma Z)].
        \]
    \end{enumerate}
\end{theorem}

The expression for the limiting variance $\sigma^2$ given in \eqref{e:sigma} is in general quite difficult to analyse, although could plausibly be used to derive upper/lower bounds. In the case of the Euler characteristic, the expression simplifies considerably; see Remark~\ref{r:var_ec}.

Theorem~\ref{t:var+clt} imposes significantly weaker assumptions on the field than the central limit theorems for non-local functionals in \cite{bmm24,mca26,hlr24}.

The statement of the almost-sure CLT given above implies the more conventional statement that, with probability one,
\[
    \frac{1}{\log(R)}\int_1^R r^{-1}\delta_{\widetilde{\Phi}_r}\;dr
\]
converges weakly to the standard Gaussian measure as $R\to\infty$, where $\delta_x$ denotes the Dirac measure on the point $x$. This follows from a standard argument making use of the separability of $\R$. Theorem~\ref{t:var+clt} appears to be the first example of an almost-sure central limit theorem for a non-local functional (or the Euler characteristic) of a Gaussian field. An analogous limit theorem for local functionals has been proven in \cite{mrz25}.

\subsection{Topological functionals: quantitative limit theorems}\label{ss:quant_lim}
We can attain refined, quantitative versions of the previous limit theorems by imposing stronger assumptions on the underlying field.

We first require the covariance function to be smooth and decay either polynomially or exponentially:

\begin{namedassumption}{Smooth}{a:smooth}
    The covariance function $K$ is $C^\infty$-smooth (or equivalently, $f$ is $C^\infty$-smooth almost surely).
\end{namedassumption}

\begin{namedassumption}{CovDecay-Pol($\beta$)}{a:cov_decay_pol}
    For some $\beta>d$,
        \[
            \sup_{\lvert\alpha\rvert\leq 1}\lvert\partial^\alpha K(x)\rvert=O(\lvert x\rvert^{-\beta})\qquad\text{as }\lvert x\rvert\to\infty.
        \]
\end{namedassumption}

\begin{namedassumption}{CovDecay-Exp}{a:cov_decay_exp}
        For some $c>0$,
        \[
            \sup_{\lvert\alpha\rvert\leq 1}\lvert\partial^\alpha K(x)\rvert=O(e^{-c\lvert x\rvert})\qquad\text{as }\lvert x\rvert\to\infty.
        \]
\end{namedassumption}

In order to prove quantitative results for excursion/level-set functionals, we also require probabilistic control over excursion/level-set components with large diameter. This control is closely related to the \emph{percolation phase transition} for the field $f$, which we now describe. For a variety of smooth, stationary Gaussian fields $f$, it has been shown \cite{bg17,mv20,riv21,sev22,mrv23} that there exists a \emph{critical level} $\ell_c$ such that:
\begin{enumerate}
    \item for all $\ell>\ell_c$, $\{f\geq\ell\}$ almost surely contains only bounded connected components,
    \item for all $\ell<\ell_c$, $\{f\geq\ell\}$ almost surely contains an unbounded connected component.
\end{enumerate}
In the planar case, $\ell_c=0$, while in higher dimensions it is expected that $\ell_c>0$, and this has been proven assuming sufficiently fast covariance decay \cite{drrv23}.

For fields with integrable covariance function (and satisfying some additional regularity conditions), it is known that for each subcritical level (i.e., $\ell>\ell_c$), the probability of the `one-arm event' that there exists a component of $\{f\geq\ell\}$ intersecting both $\Lambda_1$ and $\partial\Lambda_R$ decays exponentially in $R$. At supercritical levels ($\ell<\ell_c$), it is expected that the same is true if one considers only \emph{bounded} components (known as a `truncated one-arm event') although this is unproven. The statement follows for $\ell<-\ell_c$ (the so-called `strongly supercritical' regime) using a simple argument based on subcritical decay and symmetry of the normal distribution. At the critical level in two dimensions $\ell=\ell_c=0$, the probability of the one-arm event is known to decay (at least) polynomially and this is expected to be the correct type of decay (albeit with unknown exponent). Critical one-arm decay in higher dimensions is a major open question.

We impose different assumptions to reflect the subcritical, supercritical and critical (in dimension two) cases respectively:

Given three sets $A,B,C\subset\R^d$, we write $A\overset{B}{\longleftrightarrow}C$ to denote the event that there exists a continuous path in $B$ which starts in $A$ and ends in $C$. We let $B(x,r)$ denote the Euclidean ball of radius $r$ centred at $x$ and $B(r):=B(0,r)$.

\begin{namedassumption}{ArmDecay-NonCrit}{a:arm_decay}
    The field $f$ and level $\ell\in\R$ satisfy one of the following:
    \begin{enumerate}
        \item (Subcritical) For some $\epsilon,c>0$, as $R\to\infty$
        \[
            \P\Big(\Lambda_1\overset{\{f\geq\ell-\epsilon\}}{\longleftrightarrow}\partial\Lambda_R\Big)=O(e^{-cR}).
        \]
        \item (Strongly supercritical) For some $\epsilon,c>0$, as $R\to\infty$
            \[
                \P\Big(\Lambda_1\overset{\{f\leq\ell+\epsilon\}}{\longleftrightarrow}\partial\Lambda_R\Big)=O(e^{-cR}).
            \]
    \end{enumerate}
\end{namedassumption}

\begin{namedassumption}{ArmDecay-Crit}{a:arm_decay_critical}
    The field $f$ and level $\ell$ satisfy the following:
    \begin{enumerate}
        \item the density of the spectral measure of $f$ is strictly positive at the origin, and
        \item there exists $C,\epsilon>0$ such that for all $r,R\geq 1$
    \[
        \P\Big(\partial B(r)\overset{\{f\geq\ell\}}{\longleftrightarrow}\partial B(R)\Big)\leq C\Big(\frac{r}{R}\Big)^\epsilon.
    \]
    \end{enumerate}
\end{namedassumption}

From our preceding description, it is apparent that Assumption~\aref{a:arm_decay} is valid (for many fields with integrable covariance function) whenever $\lvert\ell\rvert>\ell_c$ which covers the subcritical and strongly supercritical regimes. In the planar case, $\ell_c=0$ and so this will comprise all non-critical levels, however in higher dimensions it will not. In Remark~\ref{r:trunc_arm}, we describe a weaker (albeit more technical) assumption which could replace Assumption~\aref{a:arm_decay} in our arguments and conjecturally holds at all non-critical levels $\ell\neq\ell_c$. Assumption~\aref{a:arm_decay_critical} holds at criticality for (many) planar fields.

With these assumptions in hand, we may proceed to our results. Our first theorem controls the higher central moments of topological functionals:

\begin{theorem}[Centred moment bounds]\label{t:higher_moments}
    Let $f$ satisfy Assumptions~\aref{a:basic}, \aref{a:cov_decay_pol} and \aref{a:smooth}. Let $\ell\in\R$ and $n\in\N$ such that $n\geq2$.
    \begin{enumerate}
        \item If $\Phi$ is the Euler characteristic and $\beta>(n+1)d$, then as $R\to\infty$
        \[
            \E[\overline{\Phi}(\Lambda_R)^{2n}]=O\Big(R^{nd}\Big).
        \]
        \item If $d=2$ and $\beta>2(n+1)$, then for any $\epsilon>0$ as $R\to\infty$
        \[
            \E[\overline{\Phi}(\Lambda_R)^{2n}]=O\Big(R^{\frac{14}{5}n+\epsilon}\Big).
        \]
        \item If Assumption~\aref{a:arm_decay} holds and $\beta>2d(d^2-1)(n-1)$, then for any $\epsilon>0$ as $R\to\infty$
        \[
            \E[\overline{\Phi}(\Lambda_R)^{2n}]=O\Big(R^{nd+\epsilon}\Big).
        \]
        \item If Assumptions~\aref{a:arm_decay} and~\aref{a:cov_decay_exp} hold, then for any $m\in\N$ as $R\to\infty$
        \[
            \E[\overline{\Phi}(\Lambda_R)^{2n}]=O\Big(R^{nd}\log^{(m)}(R)\Big),
        \]
        where $\log^{(m)}(x)=\log\log\dots\log(x)$ denotes the $m$-fold composition of the logarithm.
    \end{enumerate}
\end{theorem}

We note that the bound in the first case is optimal in the sense of matching the order that would be obtained by taking a $d$-dimensional sum of independent, identically distributed random variables.

These bounds immediately yield concentration estimates, quantifying convergence in the law of large numbers, using Markov's inequality. We state only the strongest form of such estimates:

\begin{corollary}[Superpolynomial concentration]\label{c:concentration}
    In the setting of Theorem~\ref{t:higher_moments}, suppose that Assumption~\aref{a:cov_decay_pol} holds for all $\beta>d$, then for all $\epsilon>0$ and $N\in\N$
    \[
        \P\big(\lvert\overline{\Phi}(\Lambda_R)\rvert>\epsilon R^d\big)=O(R^{-N})\qquad\text{as }R\to\infty.
    \]
\end{corollary}

Assuming super-polynomial covariance decay, we are able to explicitly characterise the rate of convergence in the law of large numbers for the Euler characteristic:

\begin{theorem}[Law of the iterated logarithm]\label{t:ec_lil}
    Let $f$ satisfy Assumptions~\aref{a:basic}, \aref{a:smooth} and \aref{a:cov_decay_pol} for all $\beta>d$. Let $\Phi$ denote the Euler characteristic and for $\ell\in\R$ suppose that $\sigma$, as defined in Theorem~\ref{t:var+clt}, is positive. Then with probability one,
    \[
        \limsup_{\substack{n\to\infty\\n\in\N}}\frac{\overline{\Phi}(\Lambda_n,f)}{\sqrt{2n^d\log\log(n)}}=\sigma\qquad\text{and}\qquad\liminf_{\substack{n\to\infty\\n\in\N}}\frac{\overline{\Phi}(\Lambda_n,f)}{\sqrt{2n^d\log\log(n)}}=-\sigma.
    \]
\end{theorem}

We note that an explicit expression for $\E[\Phi(\Lambda_n,f)]$ is obtained in \cite{at07} and that a sufficient condition for positivity of $\sigma$ is given in \cite[Proposition~2.1]{el16}.

Our final result for topological functionals is a quantitative version of the central limit theorem. This is defined in terms of Kolmogorov distance, which we now recall: given two random variables $X$ and $Y$, let
\[
    d_\mathrm{Kol}(X,Y):=\sup_{x\in\R}\lvert \P(X\leq x)-\P(Y\leq x)\rvert.
\]

\begin{theorem}[Quantitative CLT]\label{t:qclt}
    Let $f$ satisfy Assumptions~\aref{a:basic} and \aref{a:cov_decay_pol} and let $\ell\in\R$. Suppose that $\sigma$ defined in Theorem~\ref{t:var+clt} is positive and that one of the following conditions holds:
    \begin{enumerate}
        \item $\Phi$ is the Euler characteristic,
        \item Assumption~\aref{a:arm_decay} holds at level $\ell$, or
        \item Assumption~\aref{a:arm_decay_critical} holds at level $\ell$,
    \end{enumerate}
    then there exists $\eta>0$ such that as $R\to\infty$,
    \begin{equation}\label{e:qclt}
        \Delta_R:=d_\mathrm{Kol}\big(\widetilde{\Phi}_R,\sigma Z\big)=O(R^{-\eta}),
    \end{equation}
    where $Z\sim\mathcal{N}(0,1)$. 

    Moreover, if Assumption~\aref{a:smooth} also holds and $\beta>2d(d^2-1)$, then we may set
    \[
        \eta=\begin{cases}
            \frac{d\beta-d^2}{2(2d+1)\beta-d(4d-1)} &\text{in case (1),}\\
            \frac{d}{2}\frac{\beta}{(2d+1)\beta+4d(d+1)}-\epsilon &\text{for any $\epsilon>0$ in case (2),}
        \end{cases}
    \]
    and in either case (1) or (2), if Assumption~\aref{a:cov_decay_exp} holds then
    \[
        \Delta_R=O\Big(R^{-\frac{d}{2(2d+1)}}\log(R)\Big).
    \]
\end{theorem}

Sufficient conditions for positivity of $\sigma(\ell)$ when $\Phi$ is an excursion/level-set functional can be found in \cite{bmm22,bmm24,hlr24}.

The exponent $\eta$ could be compared with the exponent $d/2$ that holds in the independent, identically distributed and additive case (i.e., for the classical Berry-Esseen theorem when summing over $d$-dimensions). It is not clear whether one would expect the rate $R^{-d/2}$ to hold generally (i.e., for all $\beta>d$) in our setting.

\subsection{Volume of the unbounded component}
We will consider one further example of a non-local functional, which is not purely topological. Assuming that the field $f$ undergoes a percolation phase transition (as described above), for every $\ell<\ell_c$ there exists an unbounded component of $\{f\geq\ell\}$ almost surely and we denote the union of all such bounded/unbounded components by $\{f\geq\ell\}_{<\infty}$ and $\{f\geq\ell\}_\infty$ respectively. (Under some regularity conditions on $f$, the unbounded component of the excursion set is known to be unique \cite{sev24}, but this fact will not be needed for our arguments.) Our functional of interest is the volume of the unbounded component (restricted to a compact domain) denoted by
\[
    \Vol_\infty(D,\ell):=\lvert D\cap\{f\geq\ell\}_\infty|=\int_D\ind_{x\in\{f\geq\ell\}_\infty}\;dx
\]
for compact $D\subset\R^d$, where $\lvert\cdot\rvert$ denotes $d$-dimensional Lebesgue measure. Observe that this functional is non-local since $\ind_{x\in\{f\geq\ell\}_\infty}$ is not a pointwise function of $f$ and its derivatives. This functional, together with two other geometric functionals of the unbounded component, were analysed in \cite{mca26}, wherein a law of large numbers and central limit theorem were proven under fairly strong assumptions on the field (including Assumption~\aref{a:cov_decay_pol} for $\beta>3d$). Here, we will need only the following, weaker conditions: 

\begin{namedassumption}{CovPos}{a:cov_pos}
    The covariance function $K$ is non-negative.
\end{namedassumption}

\begin{namedassumption}{ArmDecay-Trunc}{a:trunc_arm_decay}
    For any $\gamma>0$, as $R\to\infty$
        \[
            \P\Big(0\overset{\{f\geq\ell\}_{<\infty}}{\longleftrightarrow}\partial \Lambda_R\Big)=O(R^{-\gamma}).
        \]
\end{namedassumption}

Assumption~\aref{a:trunc_arm_decay} can easily be deduced using subcritical arm decay and symmetry of the normal distribution when $\ell<-\ell_c$ (see \cite[Proposition~3.4]{mca26}). So in particular, when $d=2$, this holds for all $\ell<\ell_c=0$ (under the conditions necessary for subcritical arm decay, see \cite{riv21}). Conjecturally, Assumption~\aref{a:trunc_arm_decay} should hold for all $\ell<\ell_c$.

Under these assumptions, we can obtain analogous versions of some of our previous results:

\begin{theorem}\label{t:vol}
    Let $f$ and $\ell<\ell_c$ satisfy Assumptions~\aref{a:basic}, \aref{a:cov_pos}, \aref{a:cov_decay_pol} and \aref{a:trunc_arm_decay}. Given $x\in\R^d$, let $E_x$ denote the event that $x\in\{f\geq\ell\}_\infty$ and define
    \begin{equation}\label{e:vol_sigma}
        \theta(\ell):=\P(E_0)\qquad\text{and}\qquad\sigma^2:=\int_{\R^d}\Cov[E_x,E_0]\;dx,
    \end{equation}
    then $0<\sigma^2<\infty$ and the following holds:
    \begin{enumerate}
        \item (Variance asymptotics and CLT) As $R\to\infty$
        \[
            \frac{\Var[\Vol_\infty(\Lambda_R,\ell)]}{R^d}\longrightarrow\sigma^2\qquad\text{and}\qquad\frac{\Vol_\infty(\Lambda_R,\ell)-\theta(\ell)R^d}{R^{d/2}}\overset{d}{\longrightarrow}\sigma Z
        \]
        where $Z\sim\mathcal{N}(0,1)$.
        \item (Quantitative CLT) For all $\epsilon>0$, as $n\to\infty$ (with $n\in\N$)
        \[
            \Delta_n:=d_\mathrm{Kol}\Big(\frac{\Vol_\infty(\Lambda_n,\ell)-\theta(\ell)n^d}{\sqrt{\Var[\Vol_\infty(\Lambda_n,\ell)]}},Z\Big)=O\Big(n^{-\frac{d}{2}+\frac{d^2}{\beta+d}+\epsilon}\Big).
        \]
        Moreover, if Assumption~\aref{a:cov_decay_exp} holds, then $\Delta_n=O(n^{-d/2}\log(n)^{2d})$ as $n\to\infty$.
    \end{enumerate}
\end{theorem}
We note that for fields with sufficiently fast correlation decay, the rate of convergence in the CLT approaches the conjecturally optimal rate of $n^{-d/2}$, up to a small polynomial/logarithmic factor.

\subsection{Method of proof}\label{ss:proof_outline} We now sketch our method of proof:

\underline{Quasi-association:} Our approach builds on ideas which originated in the study of associated random fields. We recall that a set of random variables $\mathcal{X}$ is said to be associated if, for any $n\in\N$, $X_1,\dots,X_n\in\mathcal{X}$ and any bounded, measurable, coordinate-wise non-decreasing functions $f,g:\R^n\to\R$
\[
    \Cov[f(X_1,\dots,X_n),g(X_1,\dots,X_n)]\geq 0.
\]
This property is also known as the FKG inequality, and is satisfied by a variety of statistical physics models. Newman \cite{new80} proved that for a stationary, associated (discrete) random field $(X_k)_{k\in\Z^d}$, the block sums $\sum_{k\in\Z^d\cap\Lambda_n}X_k$ satisfy a central limit theorem, as $n\to\infty$, provided that the following \emph{finite susceptibility} condition holds:
\[
    \sum_{k\in\Z^d}\Cov[X_k,X_0]<\infty.
\]
Newman's proof made use of the following inequality: for associated random variables $\{X_1,\dots,X_n\}$ and Lipschitz functions $F,G:\R^n\to\R$
\begin{equation}\label{e:quasi-association}
    \lvert\Cov[F(X_1,\dots,X_n),G(X_1,\dots,X_n)]\rvert\leq C\sum_{i,j=1}^n\|F\|_{\mathrm{Lip},i}\|G\|_{\mathrm{Lip},j}\Cov[X_i,X_j]
\end{equation}
where $C>0$ is an absolute constant and $\|F\|_{\mathrm{Lip},i}$ denotes the Lipschitz constant of $F$ with respect to its $i$-th argument. Subsequent work showed that \eqref{e:quasi-association} was sufficient to establish a variety of limit theorems (see \cite{bs07} for a textbook presentation), as well as being satisfied by systems exhibiting other forms of weak dependence, and so it became known as \emph{quasi-association}.

Our goal is to replicate the arguments for quasi-associated block sums in the context of non-local functionals of Gaussian fields. To do so, we must establish three properties of such functionals: (i) quasi-association, (ii) approximate additivity and (iii) geometric stabilisation (explained below).

\underline{Topological covariance formula:} In \cite{bmr20}, a general covariance formula for topological events related to smooth Gaussian fields was proven under minimal smoothness and regularity conditions on the field. The result can be roughly described as follows: given a $C^2$ field $f:\R^d\to\R$ and $t\in[0,1]$ we let
\[
    f^t=tf+\sqrt{1-t^2}\widetilde{f}    
\]
where $\widetilde{f}$ is an independent copy of $f$. For $\ell\in\R$ fixed and $x,y\in\R^d$, the measure $\mu^t_{x,y}$ is defined as
\[
    \mu^t_{x,y}(H)=\E\Big[\lvert\det\nabla^2 f(x)\rvert\lvert\det\nabla^2 f^t(y)\rvert H(f,f^t)\;\Big|\;A^t_{x,y}\Big]\phi^t_{x,y}
\]
where $\det$ denotes the determinant, $H$ is an appropriate real-valued Borel function,
\begin{equation}\label{e:conditioning_outline}
    A^t_{x,y}:=\big\{f(x)=f^t(y)=\ell,\nabla f(x)=\nabla f^t(y)=0\big\}
\end{equation}
and $\phi^t_{x,y}$ denotes the (Gaussian) density of $(f(x),f^t(y),\nabla f(x),\nabla f^t(y))$ evaluated at $(\ell,\ell,0,0)$. The covariance formula of \cite{bmr20} then says that for events $A_1$ and $A_2$, depending only on the topology of $\{f\geq\ell\}$ restricted to the compact sets $D_1,D_2\subset\R^d$ respectively,
\begin{equation}\label{e:cov_formula_outline}
    \Cov[A_1,A_2]=\int_{\mathcal{F}_1\times\mathcal{F}_2}K(x-y)\int_0^1\mu^t_{x,y}(d_x\ind_{A_1}d_y^t\ind_{A_2})\;dtdxdy
\end{equation}
where $\mathcal{F}_i$ denotes an appropriate `stratification' of $D_i$, which accounts for boundary effects, $d_x\ind_A$ denotes the change in $\ind_A$ when a small positive perturbation is added to $f$ at the critical point $x$ and $d_y^t\ind_A$ denotes the analogous change for the interpolated field $f^t$.  (We define these notions rigorously in Section~\ref{s:quasi-association}.) 

\underline{Topological quasi-association:} The first step in our work is to extend the covariance formula \eqref{e:cov_formula_outline} to apply to Lipschitz functions of topological functionals. Specifically, for Lipschitz $F,G:\R\to\R$, certain topological functionals $\Phi$ and suitable compact sets $D_1$, $D_2$, we show that
\begin{equation}\label{e:cov_formula_lip_outline}
    \Cov\big[F(\Phi(D_1)),G(\Phi(D_2))\big]=\int_{\mathcal{F}_1\times\mathcal{F}_2}K(x-y)\int_0^1\mu^t_{x,y}\big(d_xF(\Phi(D_1))d_y^tG(\Phi(D_2))\big)\;dtdxdy,
\end{equation}
where we have omitted the arguments $f$ and $f^t$ from the functional $\Phi$ for conciseness. The proof is based on approximating $F$ and $G$ by simple functions and applying a dominated convergence argument. A very similar argument was used in \cite{bmm24b} to deduce a covariance formula for the component count.

As a corollary of the covariance formula \eqref{e:cov_formula_lip_outline}, we establish a type of quasi-association (up to boundary effects) in the continuum. Specifically, using the same notation as above, we have
\begin{equation}\label{e:quasi-association_Gaussian}
    \big\lvert\Cov\big[F(\Phi(D_1)),G(\Phi(D_2))\big]\big\rvert\leq C\|F\|_\mathrm{Lip}\|G\|_\mathrm{Lip}\int_{D_1\times D_2}\widetilde{K}(x-y)\;dxdy
\end{equation}
where $C>0$ depends only on $\Phi$ and the distribution of $f$ and we recall that $\widetilde{K}(x):=\sup_{\lvert y-x\rvert\leq 1}\lvert K(y)\rvert$. The use of $\widetilde{K}$, rather than $K$, accounts for boundary effects.

\underline{Additivity:} We show that bounded excursion/level-set functionals are approximately additive on large domains, by using a three-scale argument. We first divide the macroscopic box $\Lambda_R$ into mesoscopic boxes of size $r$ which are separated by distance $\approx a$ where $1\ll a\ll r\ll R$ (see Figure~\ref{fig:box_stratification}). We let $\chi_R$ denote the set of points at the centres of the mesoscopic boxes and $U_R=\overline{\Lambda_R\setminus\cup_{x\in\chi_R}(x+\Lambda_r)}$ the closed complement of all mesoscopic boxes in $\Lambda_R$.

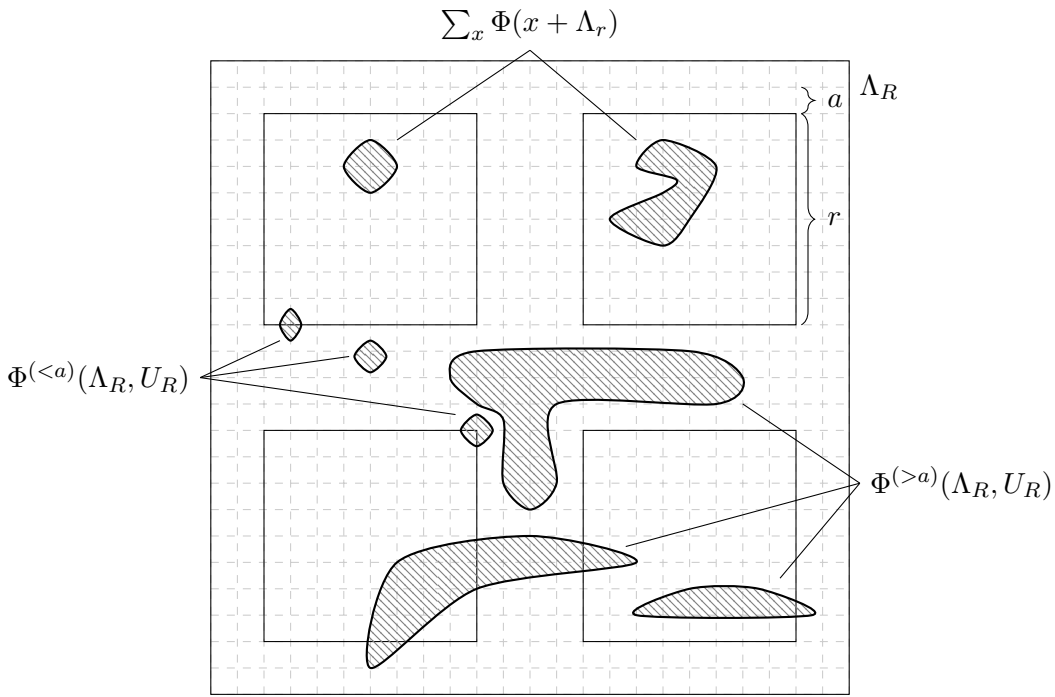
\begin{figure}[h]
    \centering
    \begin{tikzpicture}[brace/.style={decorate,decoration={brace, amplitude=5pt}}, scale=0.7]

    \draw[black!20,dashed,step=0.5] (0,0) grid (12,12);
    \draw (0,0) rectangle (12,12);
    \foreach \x in {1,7}
        \foreach \y in {1,7} {
        \draw (\x,\y) rectangle (\x+4,\y+4);
    }
    \draw[brace] (11.1,11) --(11.1,7) node[midway, right=6pt] {$r$};
    \draw[brace] (11.1,11.5) --(11.1,11) node[midway, right=6pt] {$a$};
    \node[right] at (12,11.5) {$\Lambda_R$};

    \draw [thick, pattern = north west lines, pattern color = gray] plot [smooth cycle, tension = 0.5] coordinates {(3,10.5) (2.5,10) (3,9.5) (3.5,10)};
    \draw [thick, pattern = north west lines, pattern color = gray] plot [smooth cycle, tension = 0.5] coordinates {(8,10) (8.5,10.5)(9.5,10) (9,9) (8.5,8.5) (7.5,9) (8.5,9.5) (8.75,9.75)};
    \node[above] at (6,12.2) {$\sum_x\Phi(x+\Lambda_r)$};
    \draw (6,12.2) -- (3.5, 10.5);
    \draw (6,12.2) -- (8,10.5);
    
    \draw [thick, pattern = north west lines, pattern color = gray] plot [smooth cycle, tension = 0.5] coordinates {(1.5,7.3) (1.3,7) (1.5,6.7)(1.7,7)};
    \draw [thick, pattern = north west lines, pattern color = gray] plot [smooth cycle, tension = 0.5] coordinates {(5,5.3)(4.7,5)(5,4.7)(5.3,5)};
    \draw [thick, pattern = north west lines, pattern color = gray] plot [smooth cycle, tension = 0.5] coordinates {(3,6.7)(2.7,6.4)(3,6.1)(3.3,6.4)};
    \node[left] at (-0.2,6) {$\Phi^{(<a)}(\Lambda_R,U_R)$};
    \draw (-0.2,6) -- (1.3,6.7);
    \draw (-0.2,6) -- (4.6,5.3);
    \draw (-0.2,6) -- (2.6,6.4);
    
    \draw [thick, pattern = north west lines, pattern color = gray] plot [smooth cycle, tension = 0.5] coordinates {(11.3,1.5) (10.3, 2) (9, 2) (8, 1.5)};
    \draw [thick, pattern = north west lines, pattern color = gray] plot [smooth cycle, tension = 0.5] coordinates {(3,0.5) (3.5,2.5) (6,3) (8,2.5) (5,2)};
    \draw [thick, pattern = north west lines, pattern color = gray] plot [smooth cycle, tension = 0.5] coordinates {(10,6) (9,6.5) (5,6.5) (4.5, 6) (5,5.5) (5.5,5.2) (5.5,4) (6, 3.5) (6.5, 4) (6.5,5.5) (9.5,5.5)};
    \node[right] at (12.2,4) {$\Phi^{(>a)}(\Lambda_R,U_R)$};
    \draw (12.2,4) -- (7.8,2.8);
    \draw (12.2,4) -- (10.7,2.2); 
    \draw (12.2,4) -- (10,5.5);

\end{tikzpicture}

    \caption{The box $\Lambda_R$ subdivided into separated mesoscopic $r$-boxes and stratified by hyperplanes at distance $a$. The shaded regions represent components of the excursion set $\{f\geq\ell\}$, labelled by the term of \eqref{e:top_decomp_outline} to which they contribute.}
    \label{fig:box_stratification}
\end{figure}

We then decompose
\begin{equation}\label{e:top_decomp_outline}
    \Phi(\Lambda_R)=\sum_{x\in\chi_R}\Phi(x+\Lambda_r)+\Phi^{(>a)}(\Lambda_R,U_R)+\Phi^{(<a)}(\Lambda_R,U_R)
\end{equation}
where $\Phi^{(>a)}(\Lambda_R,U_R)$ and $\Phi^{(<a)}(\Lambda_R,U_R)$ respectively denote the contributions from `large' and `small' components in $\Lambda_R$ which intersect $U_R$. The terms `large' and `small' are defined topologically, in terms of intersecting two strata at distance $a$. We then use quasi-association (i.e., \eqref{e:quasi-association_Gaussian} with $F$ and $G$ as the identity) to show that each of the last two terms has variance $o(R^d)$ as $R\to\infty$. For $\Phi^{(<a)}$, this holds because the contributing components are in a domain of volume $o(R^d)$ (and $\widetilde{K}$ is assumed integrable). For $\Phi^{(>a)}$, it follows from the fact that contributing components must be bounded but have diameter at least $a$ and this probability decays to zero (possibly very slowly) as $a\to\infty$.

Neglecting these terms of lower order variance, this expresses $\Phi_R$ as a sum of identically distributed terms. Correlations between these terms can be controlled by quasi-association, and decay since the mesoscopic boxes are separated on scale $a\gg 1$. Altogether this yields the desired additivity statement (for bounded excursion/level-set functionals).

\underline{Stabilisation:} In our setting, stabilisation intuitively refers to the property that for any $x\in\R^d$, the contribution to $\Phi(\Lambda_R)$ from the field on a neighbourhood of $x$ is roughly constant for all $R$ sufficiently large. The precise statement we need is that for a Morse function $g$ with a critical point at $x\in\R^d$, the topological derivative $d_x\Phi(\Lambda_R,g)$ converges to some constant $d_x\Phi_\infty(g)$ as $R\to\infty$. Applying this property to the covariance formula \eqref{e:cov_formula_lip_outline} with $D_1=D_2=\Lambda_R$ and $F=G$ as the identity, shows that the integrand is (approximately) independent of $R$, from which follows the convergence of $R^{-d}\Var[\Phi(\Lambda_R)]$ as $R\to\infty$. (Note that in the setting of a block sum of a discrete, quasi-associated field, this independence between the domain and the contribution of a single term to the variance is trivial.)

For a bounded excursion/level-set functional, stabilisation follows from a simple topological argument. Specifically, for a given $g$, a positive perturbation at the critical point $x$ can affect the topology by creating/destroying/merging excursion/level-set components. However for an excursion/level-set functional, the only changes which matter are those affecting \emph{bounded} components. For $R$ sufficiently large, all such changes must be captured within $\Lambda_R$, so that $d_x\Phi(\Lambda_R,g)$ is constant. This also reveals that the rate of stabilisation is closely related to the rate of truncated arm decay.

\underline{Qualitative vs quantitative limit theorems:} Combining the three properties just established allows us to adapt many classical arguments for quasi-associated block sums and so establish the previously stated qualitative limit theorems (for excursion/level-set functionals). For instance, we prove the law of large numbers using Etemadi's method and the distributional central limit theorem using Newman's argument \cite{new80} based on the characteristic function. Due to the variety of techniques involved for the various limit theorems, we do not attempt to outline them here.

The quantitative limit theorems for excursion/level-set functionals require somewhat more refined additivity and stabilisation estimates. Such estimates can be achieved by controlling the measure of certain truncated arm events under $\mu^t_{x,y}$. This, in turn, reduces to controlling arm events for $(f\mid A^t_{x,y})$, the field $f$ conditioned to have critical points at $x$ and $y$ at level $\ell$. To do so, we couple the conditioned and unconditioned fields (which are close, away from the critical points $x$ and $y$) and invoke the arm decay assumptions we have imposed on $f$.

\underline{Euler characteristic and volume functional:} In studying our remaining two functionals, we make use of their more bespoke properties.

The Euler characteristic satisfies an exact additivity property and by the Poincar\'e-Hopf theorem, it stabilises immediately. We are therefore able to prove stronger versions of our limit theorems for excursion/level-set functionals, without recourse to arm decay assumptions.

The volume of the unbounded component is not a topological functional, so the previous arguments can not be used directly. Instead we observe that the discrete random field $(V_w)_{w\in\Z^d}$, defined by $V_w=\Vol_\infty(w/2+[0,1/2]^d)$, is associated. This follows from the fact that $\Vol_\infty[\cdot]$ is a pointwise non-decreasing function of $f$ and a theorem of Pitt for positively correlated Gaussian variables \cite{pit82}. We can then apply standard limit theorems for discrete associated fields to $\sum_{w\in\Z^d\cap[-n,n)^d}V_w=\Vol_\infty(\Lambda_{n})$ provided that $(V_w)_{w\in\Z^d}$ satisfies the finite susceptibility condition. To control the correlation of the field, we use the covariance formula \eqref{e:cov_formula_outline} along with Assumption~\aref{a:trunc_arm_decay}.

\subsection{Discussion}\label{ss:discussion}
We briefly outline some possible generalisations of our results and directions for future study.

\underline{Optimal rate of convergence:} We recall that the optimal rate of convergence in the CLT for i.i.d.\ sums (in $d$-dimensions) is $n^{-d/2}$, courtesy of the Berry-Esseen theorem. The same rate has been verified for local functionals of smooth Gaussian fields with integrable covariance function \cite{npp11} (at least in the one-dimensional case) using the Malliavin-Stein method. While our result for the volume functional approaches this optimal rate for sufficiently fast decay of correlations, the best rate we obtain for topological functionals is $n^{-\frac{d}{2}\frac{1}{2d+1}}\log(n)$. However it is unclear whether one should expect convergence at the optimal rate in this case, since the functionals are not additive, and so boundary effects may come into play. Moreover, at the critical level it is conjectured that large bounded clusters occur more frequently, which may conceivably exacerbate boundary effects, leading to a slower rate of convergence than for other levels. If so, this would be a novel phenomenon when compared to the case of local functionals.

\begin{question}
    Can the rate of convergence in the CLT for topological functionals be improved to $n^{-d/2}$? For all fields with integrable covariance function, or under stronger decay assumptions?
    Is the rate of convergence slower at the critical level?
\end{question}

\underline{Further extensions:} The use of quasi-association for studying non-local functionals of Gaussian fields offers many possibilities for further generalisation. For instance, the covariance formula in \cite{bmr20} was actually proven on general manifolds, which may offer a route to establishing limit theorems for ensembles of Gaussian fields on manifolds with suitable stationary local limits (see \cite{ns16} for a law of large numbers in this setting). A quasi-association approach could also be applied to functionals of non-stationary fields: much of the theory in the discrete setting holds under assumptions of uniform non-degeneracy and correlation decay (e.g., \cite{cg84}). Moreover, quasi-association methods provide a roadmap to proving further limit theorems such as strong/weak invariance principles.

\underline{Unified approach to approximately additive functionals:} Whilst our primary motivation in this work is the study of non-local functionals, it would be valuable to develop methods that can be used to give a unified treatment of both local and (approximately additive) non-local functionals. One might ask whether it is possible to do so using a quasi-association approach. The proof of the covariance formula in \cite{bmr20} relies on a fundamental interpolation formula for Gaussian vectors, along with sophisticated arguments to relate this to changes in the topology of excursion sets. If this analysis could be generalised suitably, it may be possible to establish quasi-association for a much wider class of functionals, leading to such a desired unified treatment. This could also allow the study of non-local functionals which are not purely topological: for instance, the measure of the unbounded component of the level set $\{f=\ell\}$. Such a component is known to exists at certain levels \cite{drrv23} and the resulting functional is non-local, but does not have the monotonicity property that allows us to study the volume of the unbounded excursion component.

\subsection{Acknowledgements}
I would like to thank Stephen Muirhead for helpful comments on an earlier draft of this work, as well as suggestions for proving the quantitative CLT under Assumption~\aref{a:arm_decay_critical}.

\section{Covariance formula and quasi-association}\label{s:quasi-association}
In this section we describe the covariance formula from \cite{bmr20} which forms the foundation of our work, extend it to Lipschitz functions of topological functionals and thereby establish quasi-association of said functionals. Along the way, we formalise the topological concepts and preliminary results required for our later work.

\subsection{Covariance formula for events}\hfill

\underline{Stratified sets:} Throughout our analysis, we fix an orthonormal basis $e_1,\dots,e_d$ of $\R^d$ which we assume to be parallel to our coordinate axes. We define a \emph{regular affine subspace} to be a set formed by translating the span of some subset of these basis vectors, in other words a set of the form
\[
    \{x+a_1e_{i_1}+\dots +a_ne_{i_n}\;|\;a_1,\dots,a_n\in\R\}    
\]
for $x\in\R^d$, $n\in\{0,1,\dots,d\}$ and $i_1,\dots,i_n\in\{1,\dots,d\}$. We define a \emph{regular affine stratified set} to be a compact set $D\subset\R^d$ together with a finite partition $D=\sqcup_{F\in\mathcal{F}}F$ of open, connected subsets $F$ of regular affine subspaces of $\R^d$ such that, for any $F_1,F_2\in\mathcal{F}$ if $F_1\cap\overline{F}_2\neq\emptyset$ then $F_1\subseteq \overline{F}_2$. We refer to the elements of $\mathcal{F}$ as \emph{strata}. A natural example of a regular affine stratified set is the cube $\Lambda_R:=[-R/2,R/2]^d$ for $R>0$ together with its (open) faces of dimension $0,1,\dots,d$. We will call this the \emph{minimal} stratification of $\Lambda_R$, denoted $\mathcal{F}_R$, and often use it without explicit mention.

We equip each stratum $F\in\mathcal{F}$ with $v_F$, the Lebesgue measure of the minimal affine subspace containing $F$. If $F$ is a zero-dimensional stratum, then $v_F$ denotes the Dirac measure at the point in $F$. We use the following notation for integrating over all strata of an affine stratified set:
\[
    \int_\mathcal{F} h(x)\;dx:=\sum_{F\in\mathcal{F}}\int_F h(x)\;dv_F(x)
\]
where $h:D\to\R$ is integrable on each stratum.

For a given regular affine stratified set $(D,\mathcal{F})$, a \emph{stratified isotopy} is a continuous map $H:D\times[0,1]\to D$ such that for each $t\in[0,1]$, $H(\cdot,t):D\to D$ is a homeomorphism and for each $F\in\mathcal{F}$, $H(F,t)=F$. For $E\subseteq D$, we define the \emph{stratified isotopy class} of $E$, denoted $[E]_{\mathcal{F}}$, to be the set of images $H(E,1)$ where $H$ is a stratified isotopy of $(D,\mathcal{F})$.

\underline{Regular and Morse functions:} For a $C^2$-smooth function $g:D\to\R$, we say that $x\in F\in\mathcal{F}$ is a (non-degenerate) \emph{stratified critical point} if it is a (non-degenerate) critical point of $g|_F$. By convention, the singleton in a zero-dimensional stratum is considered to be a non-degenerate critical point. We say that $g:D\to\R$ is \emph{regular} (on $D$ at level $\ell$) if it is $C^2$-smooth on each stratum of $D$ and $\ell$ is not a critical value of $g$. We let $\mathrm{Reg}(D,\mathcal{F},\ell)$ denote the set of such functions. Finally, we let $\mathcal{E}_{\mathcal{F}}$ denote the set of stratified isotopy classes of regular excursion sets on $D$, that is
\[
    \mathcal{E}_{\mathcal{F}}=\big\{[\{g\geq 0\}]_\mathcal{F}\;\big|\;g\in\mathrm{Reg}(D,\mathcal{F},0)\big\}.
\]
As a consequence of Bulinskaya's lemma (see \cite[Lemma~11.2.10]{at07}), for any regular affine stratified set $D$ and level $\ell$, a Gaussian field $f$ satisfying Assumption~\aref{a:basic} will be regular on $D$ at level $\ell$ with probability one. Therefore $[\{f\geq\ell\}]_\mathcal{F}\in\mathcal{E}_\mathcal{F}$ almost surely.

We say that $g:D\to\R$ is \emph{Morse} if:
\begin{enumerate}
    \item $g$ is $C^2$-smooth on each stratum of $D$,
    \item all stratified critical points of $g$ are non-degenerate, and
    \item if $x$ is a stratified critical point of a stratum $F$ then $\nabla|_{F^\prime}g(x)\neq 0$ for any affine subspace $F^\prime$ that contains $F$ and has higher dimension than $F$.
\end{enumerate}
We let $\mathrm{Morse}(D,\mathcal{F},\ell)$ be the set of Morse function on $(D,\mathcal{F})$ for which $\ell$ is not a critical value and $\mathrm{Morse}_x(D,\mathcal{F},\ell)$ denote the set of Morse functions on $(D,\mathcal{F})$ for which $x$ is the unique stratified critical point at level $\ell$. (If the stratification is clear from context, we may omit the second argument from these expressions.)

\underline{Topological events:} It is shown in \cite[Corollary~5.8]{bmr20} that for any regular affine stratified set $(D,\mathcal{F})$, $\mathcal{E}_\mathcal{F}$ is countable and so we endow it with its maximal $\sigma$-algebra. The same result also states that, for a Gaussian field satisfying Assumption~\aref{a:basic}, the mapping from $f$ to $[\{f\geq\ell\}]_\mathcal{F}$ is measurable (for any $\ell\in\R$). We therefore define a \emph{topological event} to be an event $A$ which is measurable with respect to $[\{f\geq\ell\}]_\mathcal{F}$. Examples of topological events include crossing events (i.e., the event that there exists an excursion component intersecting two given boundary strata) of relevance to percolation theory or events involving the number of excursion set components.

\underline{Pivotal measures:} Let us fix two regular affine stratified sets $(D_1,\mathcal{F}_1)$ and $(D_2,\mathcal{F}_2)$, a Gaussian field $f$ satisfying Assumption~\aref{a:basic} and a level $\ell\in\R$. We let $\widetilde{f}$ be an independent Gaussian field with the same distribution as $f$ and, for $t\in[0,1]$, define the interpolated field
\[
    f^t=tf+\sqrt{1-t^2}\widetilde{f}.
\]
Observe that, for each $t$, $f^t$ has the same distribution as $f$. Given $x\in D_1$, $y\in D_2$ and $t\in[0,1)$, we define the event
\begin{equation}\label{e:conditioning}
    A^t_{x,y}=\big\{f(x)=f^t(y)=\ell,\nabla|_{F_x} f(x)=0, \nabla|_{F_y}f^t(y)=0\big\}
\end{equation}
where $F_x$ and $F_y$ denote the strata of $D_1$ and $D_2$ which contain $x$ and $y$ respectively. By convention, if $F$ is a zero-dimensional stratum, then $\nabla|_Ff\equiv 0$. We let $\phi^t_{x,y}$ denote the density of the Gaussian vector $(f(x),f^t(y),\nabla|_{F_x}f(x),\nabla|_{F_y}f^t(y))$ evaluated at $(\ell,\ell,0,0)$, which allows us to define the \emph{pivotal measure} as
\begin{equation}\label{e:piv_measure}
    \mu^t_{x,y}(H)=\E\Big[\big\lvert\det\nabla|_{F_x}^2f(x)\big\rvert\big\lvert\det\nabla|_{F_y}^2f^t(y)\big\rvert H(f,f^t)\Big|A^t_{x,y}\Big]\phi^t_{x,y}
\end{equation}
where $H$ is a real-valued, Borel measurable function and for a zero-dimensional stratum $F$, $\det\nabla^2|_Ff\equiv 1$ by convention. We remind the reader that these measures depend on the sets $D_1$ and $D_2$, the field $f$ and the level $\ell$, although this is suppressed from the notation for clarity. The term pivotal measure was used in a slightly different way in \cite{bmr20} to refer to the integral of $\mu^t_{x,y}$ over $t$, but we will find it more convenient to use this to refer directly to $\mu^t_{x,y}$.

For later use, we state a regularity property of $f$ and $f^t$ under the pivotal measure:

\begin{lemma}\label{l:conditioned_field_morse}
    Let $f$ satisfy Assumption~\aref{a:basic}, $\ell\in\R$ and $(D_1,\mathcal{F}_1)$ and $(D_2,\mathcal{F}_2)$ be regular affine stratified sets. For distinct points $x\in D_1$, $y\in D_2$ and $t\in[0,1)$, conditional on $A^t_{x,y}$, with probability one
    \[
        f\in\mathrm{Morse}_x(D_1,\mathcal{F}_1,\ell)\qquad\text{and}\qquad f^t\in\mathrm{Morse}_y(D_2,\mathcal{F}_2,\ell).
    \]
\end{lemma}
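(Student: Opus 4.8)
The plan is to decompose the claim into two pieces: (i) almost surely under $A^t_{x,y}$, the fields $f$ and $f^t$ are Morse on $D_1$ and $D_2$ respectively, and (ii) $x$ (resp.\ $y$) is the \emph{unique} stratified critical point of $f$ (resp.\ $f^t$) at level $\ell$. Since $t\in[0,1)$, the conditioning event $A^t_{x,y}$ is a non-degenerate Gaussian conditioning (the relevant finite-dimensional Gaussian vector in \eqref{e:conditioning} has an invertible covariance matrix, using part (4) of Assumption~\ref{a:minimal} and $t<1$, which guarantees $(f(x),\nabla|_{F_x}f(x))$ is non-degenerate and similarly $(f^t(y),\nabla|_{F_y}f^t(y))$; one should check these two blocks are jointly non-degenerate when $x\neq y$). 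Hence conditioning on $A^t_{x,y}$ yields a Gaussian field (in fact a pair of jointly Gaussian fields $(f,f^t)$) with a smooth, non-degenerate distribution away from the conditioning points, and the strategy is to transfer the standard genericity results for \emph{unconditioned} smooth Gaussian fields to this conditioned field.

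For part (i): the conditioned field $f\mid A^t_{x,y}$ is, on any stratum $F\in\mathcal{F}_1$, a Gaussian field whose pointwise distributions remain non-degenerate at points other than $x$ (and, because $C^4$-smoothness is assumed, the pair $(g|_F(z), \nabla|_F g|_F(z))$ has a density for $z\neq x$). The three Morse conditions in the definition above each fail only on a $\P$-null set: non-existence of degenerate stratified critical points follows from Bulinskaya's lemma applied to the map $z\mapsto \nabla|_F f(z)$ on $F\setminus\{x\}$ together with a second application to rule out simultaneous vanishing of $\nabla|_F f$ and $\det \nabla^2|_F f$ — exactly the argument of \cite[Lemma~11.2.10]{at07}, now run for the conditioned field; the transversality condition (3) (that $\nabla|_{F'}g\neq 0$ at a critical point of $g|_F$ whenever $F\subsetneq F'$) follows similarly from Bulinskaya's lemma applied to $z\mapsto (\nabla|_F f(z),\nabla|_{F'}f(z))$ on $F\setminus\{x\}$, since this is a map into $\R^{\dim F + \dim F'}$ with $\dim F+\dim F'>\dim F$. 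At the conditioning point $x$ itself, the Morse conditions hold by inspecting the conditioning: $\nabla|_{F_x}f(x)=0$ is imposed, and non-degeneracy of $\nabla^2|_{F_x}f(x)$ as well as condition (3) at $x$ hold almost surely because, conditional on $A^t_{x,y}$, the Hessian $\nabla^2|_{F_x}f(x)$ and the gradients $\nabla|_{F'}f(x)$ (for $F'\supsetneq F_x$) still have non-degenerate Gaussian distributions — this is where one uses that the conditioning only fixes $f(x)$ and $\nabla|_{F_x}f(x)$, not higher derivatives or transverse derivatives. The identical argument applies to $f^t$ on $D_2$.

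For part (ii): one must show that, conditional on $A^t_{x,y}$, there is almost surely no stratified critical point of $f$ on $D_1$ at level $\ell$ other than $x$. This is again Bulinskaya's lemma: for each stratum $F$, the event that $f|_F$ has a critical point at level $\ell$ somewhere in $F\setminus\{x\}$ has probability zero, because $z\mapsto (f(z)-\ell, \nabla|_F f(z))\in\R^{1+\dim F}$ is a smooth map (of the conditioned field) into a space of dimension strictly exceeding $\dim F$, with a bounded density on $F\setminus\{x\}$, so its zero set is almost surely empty. Summing over the finitely many strata of $D_1$ gives the claim; likewise for $f^t$ on $D_2$.

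\textbf{Main obstacle.} The routine part is the repeated invocation of Bulinskaya's lemma; the one genuinely delicate point is verifying that the conditioned field $(f,f^t)\mid A^t_{x,y}$ retains the non-degeneracy needed to apply that lemma — i.e.\ that at every point $z\neq x$ (resp.\ $z\neq y$) the relevant finite-dimensional marginals still possess bounded densities, and that the conditioning event itself is well-defined (its defining Gaussian vector is non-degenerate). For $t\in[0,1)$ this is where part (4) of Assumption~\ref{a:minimal} (support of the spectral measure contains an open set) enters, ensuring no exact linear relations among $f(z)$, $\nabla f(z)$, $\nabla^2 f(z)$, $f(x)$, $\nabla f(x)$ (and their $f^t$-counterparts) for distinct $z,x$; I would isolate this as a short lemma on non-degeneracy of Gaussian marginals, then feed it into the Bulinskaya arguments above. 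The case $x=y$ (on a common stratum or different strata) is excluded by hypothesis, which conveniently sidesteps the most singular configuration.
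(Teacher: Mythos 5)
Your proposal is correct and follows essentially the same route as the paper: both arguments use the non-degeneracy guaranteed by the spectral assumption to make sense of the Gaussian conditioning, transfer the standard Bulinskaya/Adler--Taylor genericity results to the conditioned field on each stratum away from $x$ (resp.\ $y$), and handle the conditioning point itself by noting that the conditional laws of the Hessian and transverse gradients at $x$ remain non-degenerate. The only cosmetic difference is that the paper writes the conditioned field explicitly via Gaussian regression and applies the genericity results on $D\setminus B(x,\epsilon_n)$ with $\epsilon_n\searrow 0$ (which is the clean way to justify your claim of usable densities on $F\setminus\{x\}$, where boundedness can fail near $x$), but this is exactly the exhaustion you would need and does not change the argument.
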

\begin{proof}
Since $(f,f^t)$ has the same distribution as $(f^t,f)$, it is sufficient to prove the first claim. In doing so, we make repeated use of the fact that the Gaussian vectors formed by evaluating $(f,\nabla f,\nabla^2 f,\widetilde{f},\nabla\widetilde{f},\nabla^2\widetilde{f})$ at any finite set of distinct points is non-degenerate in the sense of having a covariance matrix of full rank (more precisely this holds under the convention that $\nabla^2 f$ consists of the $d(d+1)/2$ elements on or below the diagonal of the Hessian matrix). This follows from the fact that the support of the spectral measure of $f$ contains an open set \cite[Lemma~13]{bmm24b}.

By Gaussian regression, $f$ conditioned on $A^t_{x,y}$ is equal in distribution to
\begin{equation}\label{e:conditioned_morse1}
    f(\cdot)+\Cov\big[f(\cdot),V^t_{x,y}\big]\Var\big[V^t_{x,y}\big]^{-1}((\ell,\ell,0,0)-V^t_{x,y})^T
\end{equation}
where the superscript $T$ denotes transposition and
\[
    V^t_{x,y}:=(f(x),f^t(y),\nabla|_{F_x}f(x),\nabla|_{F_y}f^t(y)).
\]
Since $f$ is $C^4$ by assumption, its covariance function is $C^8$ and from the representation in \eqref{e:conditioned_morse1}, $(f|A^t_{x,y})$ has a version which is $C^4$ almost surely. It also follows from evaluating the mean and variance of the gradient of \eqref{e:conditioned_morse1} that the conditioned field almost surely has a critical point at $x$ at level $\ell$.

For any point $u$ in a stratum $F\in \mathcal{F}_1$, the Gaussian vector $(\nabla|_Ff(u),\nabla|_F^2f(u)|A^t_{x,y})$ is non-degenerate. It then follows from \cite[Corollary~11.3.5]{at07} that for any $\epsilon>0$, $(f|A^t_{x,y})$ restricted to $D\setminus B(x,\epsilon)$ is Morse almost surely, where $B(x,\epsilon)$ denotes the open Euclidean ball around $x$ of radius $\epsilon$. (The reader checking definitions may find it helpful to consult \cite[Corollary~11.3.2]{at07} which has a much simpler statement but applies only to boxes rather than more general stratified sets.) Applying this result to a countable sequence $\epsilon_n\searrow 0$ shows that $(f|A^t_{x,y})$ satisfies the second and third properties of a Morse function for all stratified critical points other than $x$. Applying Bulinskaya's lemma (Lemma~11.2.10 of \cite{at07}) in a similar way to $F\setminus B(x,\epsilon)$ for each stratum $F\in\mathcal{F}_1$ shows that $x$ is the only stratified critical point of $f$ at level $\ell$ almost surely. 

Since $(\nabla^2 f(x)|A^t_{x,y})$ is a non-degenerate Gaussian vector, we see that with probability one
\[
    \big(\det\nabla|_{F_x}^2f(x)\;\big|\;A^t_{x,y}\big)\neq 0.
\]
This implies that $(f|A^t_{x,y})$ satisfies the second point in the definition of a Morse function. Finally, letting $F_x^\perp$ denote the maximal affine subspace orthogonal to $F_x$ and passing through $x$, we see that $(\nabla|_{F_x^\perp}f(x)|A^t_{x,y})$ is non-degenerate, so that each of its elements is non-zero almost surely. This verifies the third point in the definition of a Morse function, completing the proof.
\end{proof}

\underline{Topological derivatives:} Let us fix a regular affine stratified set $(D,\mathcal{F})$ and a level $\ell$. For $A\subset\mathcal{E}_\mathcal{F}$ and $g\in \mathrm{Reg}(D,\mathcal{F},\ell)$ we adopt the notation
\[
    \ind_A(g)=\begin{cases}
        1 &\text{if }[\{g\geq\ell\}]_\mathcal{F}\in A\\
        0 &\text{otherwise}.
    \end{cases}
\]
We wish to define a type of discrete derivative, which describes how this indicator function changes when $g$ undergoes a small, positive perturbation near a particular point $x\in D$. Observe that for a sufficiently small perturbation, the indicator function will not change unless $g$ has a critical point at $x$ at level $\ell$. Hence the derivative will be defined on $\mathrm{Morse}_x(D,\mathcal{F},\ell)$.

Now let $\rho:\R^d\to\R$ be smooth, compactly supported, non-negative, and strictly positive at the origin. For $\epsilon,\delta>0$, we set $\rho_{\delta,\epsilon,x}(y)=\delta\rho(\epsilon^{-1}(y-x))$. For $x\in D$, $A\subset\mathcal{E}_\mathcal{F}$ and $g\in \mathrm{Morse}_x(D,\mathcal{F},\ell)$ we define the \emph{topological derivative} of $\ind_A$ at $g$ as
\begin{equation}\label{e:top_derivative_events}
    d_x\ind_A(g):=\lim_{\epsilon\searrow 0}\lim_{\delta\searrow0}\Big(\ind_A(g+\rho_{\delta,\epsilon,x})-\ind_A(g-\rho_{\delta,\epsilon,x})\Big).
\end{equation}
We note that, since $x$ is a non-degenerate critical point of $g$, for $\epsilon$ and $\delta$ sufficiently small, $g\pm\rho_{\delta,\epsilon,x}\in\mathrm{Reg}(D,\mathcal{F},\ell)$, so the term inside the limit above is well defined. It follows from a standard Morse theory argument (see Lemma~5.4 and Remark~5.6 of \cite{bmr20} or Lemma 12 of \cite{bmm24b}) that for any choices of $\epsilon,\delta>0$ sufficiently small, the excursion sets $\{g+\rho_{\delta,\epsilon,x}\geq\ell\}$ (respectively $\{g-\rho_{\delta,\epsilon,x}\geq\ell\}$) can be mapped to one another by a stratified isotopy, and so the limit above is well defined. In the context of a Gaussian field $f$ and its interpolation $f^t$, we use the abbreviated notation
\[
    d_x\ind_A:=d_x\ind_A(f)\qquad\text{and}\qquad d_y^t\ind_A=d_y\ind_A(f^t).
\]
Observe that these expressions are well defined on the event $A^t_{x,y}$ by Lemma~\ref{l:conditioned_field_morse}.

We may now state the covariance formula for topological events:

\begin{theorem}[Theorem 2.14 of \cite{bmr20}]\label{t:cov_formula_events}
    Let $f$ satisfy Assumption~\aref{a:basic}, let $(D_1,\mathcal{F}_1)$ and $(D_2,\mathcal{F}_2)$ be regular affine stratified sets and let $A_1$ and $A_2$ be topological events (at level $\ell$) for $D_1$ and $D_2$ respectively, then
    \[
        \Cov[A_1,A_2]=\int_{\mathcal{F}_1\times\mathcal{F}_2}K(x-y)\int_0^1\mu^t_{x,y}(d_x\ind_{A_1}d_y^t\ind_{A_2})\;dtdxdy.
    \]
\end{theorem}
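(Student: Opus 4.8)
The plan is to obtain the formula from the Gaussian interpolation (``smart path'') formula, handling the discontinuous topological indicators by a limiting Kac--Rice argument. Since $f^0=\widetilde{f}$ is independent of $f$ while $f^1=f$, the first step is the identity
\[
\Cov[A_1,A_2]=\E\big[\ind_{A_1}(f)\ind_{A_2}(f)\big]-\E\big[\ind_{A_1}(f)\big]\,\E\big[\ind_{A_2}(\widetilde{f})\big]=\int_0^1\frac{d}{dt}\,\E\big[\ind_{A_1}(f)\ind_{A_2}(f^t)\big]\;dt ,
\]
so it suffices to compute $\partial_t g(t)$ for $g(t):=\E[\ind_{A_1}(f)\ind_{A_2}(f^t)]$ and to check that $g$ is absolutely continuous on $[0,1]$. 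Continuity (including at $t=1$, where it is only $\partial_t f^t$ and not $f^t$ that degenerates) follows because $f^t\to f$ in $C^2$ and, by Bulinskaya's lemma, the limit is almost surely regular on $D_2$ and hence a continuity point of $\ind_{A_2}$; absolute continuity will come out of the computation, carried out first on $[0,1-\eta]$ and then letting $\eta\searrow0$.

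\emph{Interpolation formula for smooth cylinder approximations.} I would approximate each $\ind_{A_i}$ by a bounded smooth cylinder functional $\Phi_i^{(n)}$, i.e.\ a mollified function of the values of $f$ on a fine grid in $D_i$, with $\Phi_i^{(n)}\to\ind_{A_i}$ pointwise at regular functions. For these, the joint Gaussian vector of the relevant point values has covariance whose only $t$-dependent entries are the $f$-versus-$f^t$ cross-covariances $\Cov[f(x),f^t(y)]=tK(x-y)$, with $t$-derivative $K(x-y)$; the classical interpolation formula therefore gives
\[
\frac{d}{dt}\,\E\big[\Phi_1^{(n)}(f)\Phi_2^{(n)}(f^t)\big]=\int_{D_1\times D_2}K(x-y)\,\E\big[\partial_{f(x)}\Phi_1^{(n)}(f)\;\partial_{f^t(y)}\Phi_2^{(n)}(f^t)\big]\;dx\,dy ,
\]
the product structure making the mixed second derivative factor.

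\emph{Identifying the limit.} It remains to send $n\to\infty$ and to show that the integrand converges, in the sense of measures in $(x,y)$, to $\mu^t_{x,y}(d_x\ind_{A_1}d^t_y\ind_{A_2})$. The point is that $\ind_{A_1}$ is locally constant on regular functions, so $\partial_{f(x)}\Phi_1^{(n)}(f)$ concentrates, as $n\to\infty$, on configurations in which $f$ is about to change the stratified isotopy class of its excursion set near $x$ --- that is, has a stratified critical point at $x$ at level $\ell$ --- and the signed mass it carries there is exactly the topological derivative $d_x\ind_{A_1}(f)$ of \eqref{e:top_derivative_events} (the Morse-theoretic fact behind the definition of that derivative). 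A co-area/Kac--Rice computation converts this concentration into a conditional expectation with respect to the event $\{f(x)=\ell,\,\nabla|_{F_x}f(x)=0\}$, weighted by the Jacobian $|\det\nabla|_{F_x}^2 f(x)|$ and evaluated against the Gaussian density $\phi^t_{x,y}$; doing this jointly for $x$ under $f$ and for $y$ under $f^t$ reconstitutes precisely $\mu^t_{x,y}$ applied to $d_x\ind_{A_1}d^t_y\ind_{A_2}$, and integrating over $t$ gives the claimed formula. Equivalently, one can differentiate $g$ directly: $\ind_{A_2}(f^t)$ is piecewise constant in $t$ with jumps only at the isolated times when $f^t$ has a stratified critical point at level $\ell$, each jump equals $\mathrm{sign}(\partial_t f^t(y))\,d_y\ind_{A_2}(f^t)$ by the same Morse argument, and a stratified Kac--Rice formula rewrites the sum of jumps as the same space--time integral, the surviving factor $\partial_t f^t(y)=f(y)-\tfrac{t}{\sqrt{1-t^2}}\widetilde{f}(y)$ then producing $K(x-y)$ via $\E[\partial_t f^t(y)\,f(x)]=K(x-y)$ once $\ind_{A_1}(f)$ is opened up by Gaussian integration by parts.

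\emph{Main obstacle.} The delicate part is the last step: rigorously identifying the limiting distributional derivative of a discontinuous topological indicator. One must show that $\ind_{A_i}$ is almost surely locally constant on the regular functions with topology changes governed \emph{precisely} by a single non-degenerate stratified critical point crossing $\ell$, that the resulting change of stratified isotopy class is well defined and equals $d_x\ind_{A_i}$, and that the interchange of limits and integrals is legitimate --- the latter resting on Kac--Rice moment bounds for the number of stratified critical points of $f$ near level $\ell$ (available under Assumption~\ref{a:minimal}) and on the non-degeneracy facts recorded in Lemma~\ref{l:conditioned_field_morse}, together with a check that the degenerate configurations (critical points on stratum boundaries, coincident critical values, tangential crossings, the endpoint $t=1$) form a null set. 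Making all of this precise is exactly the content of \cite{bmr20}.
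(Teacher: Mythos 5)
The statement you were asked to prove is not proven in this paper at all: it is quoted verbatim as Theorem~2.13 of \cite{bmr20}, and the paper simply imports it (the only internal commentary is the heuristic comparison with the classical Gaussian interpolation formula given just after the statement, and the remark in Section~\ref{ss:discussion} that the proof in \cite{bmr20} ``relies on a fundamental interpolation formula for Gaussian vectors, along with sophisticated arguments to relate this to changes in the topology of excursion sets''). So there is no proof of the paper's own to compare against, and your outline does correctly reflect the strategy of the cited original: interpolate along $f^t$, differentiate in $t$ via the smart-path/Gaussian integration-by-parts identity, and identify the resulting derivative of the discontinuous topological indicator with the pivotal measure applied to the Morse-theoretic derivative $d_x\ind_{A}$ via a Kac--Rice/co-area argument.

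However, as a proof it has genuine gaps, which you partly acknowledge yourself in the final paragraph. The steps you defer are exactly the substance of the theorem: (i) the passage from the cylinder approximation to the limit must produce contributions from \emph{all} strata, with the measures $v_F$ on lower-dimensional faces, whereas your displayed interpolation limit is an integral over $D_1\times D_2$ against Lebesgue measure --- the final formula integrates over $\mathcal{F}_1\times\mathcal{F}_2$, and the boundary-strata terms (critical points of $f|_F$) do not emerge from a purely top-dimensional concentration argument without additional work; (ii) the interchange of limits requires uniform control of $\|\mu^t_{x,y}\|$ as $t\to1$ and $x\to y$, where the conditioning vector $(f(x),f^t(y),\nabla f(x),\nabla f^t(y))$ degenerates --- in this paper's framework that control is the divided-difference bound of Proposition~\ref{p:intensity_integrable}, and your sketch does not indicate how the Kac--Rice limit survives this degeneration; (iii) the claim that topology changes are governed by single non-degenerate stratified critical values crossing $\ell$, that $[\{f\geq\ell\}]_{\mathcal{F}}$ is measurable with $\mathcal{E}_\mathcal{F}$ countable, and that all degenerate configurations are null, are nontrivial inputs rather than routine checks. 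Since you explicitly state that making all of this precise ``is exactly the content of \cite{bmr20}'', what you have written is a correct road map to the cited proof rather than a proof; given that the paper itself treats the result as an external citation, that is a reasonable stance, but it should be presented as such and not as an independent argument.
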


To gain some intuition for this formula, we can relate it to a classical interpolation formula for smooth functions of Gaussian vectors \cite{pit96,bgh01}:
\begin{proposition}
    Let $X$ be an $n$-dimensional centred Gaussian vector with covariance matrix $(K_{i,j})_{1\leq i,j\leq n}$ and let $g_1,g_2:\R^n\to\R$ be absolutely continuous such that $g_i(X)$ and $\|\nabla g_i(X)\|$ are square integrable for $i=1,2$. Then
    \[
        \Cov[g_1(X),g_2(X)]=\sum_{1\leq i,j\leq n}K_{i,j}\int_0^1\E\Big[\frac{dg_1(X)}{dX_i}\frac{dg_2(X^t)}{dX_j^t}\Big]\;dt
    \]
    where $X^t:=tX+\sqrt{1-t^2}\widetilde{X}$ and $\widetilde{X}$ is an independent copy of $X$.
\end{proposition}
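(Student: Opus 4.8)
The plan is a standard Gaussian interpolation argument: I would transport $g_2$ from being evaluated at an independent copy of $X$ to being evaluated at $X$ itself, and differentiate the resulting path. Set $\Psi(t):=\E[g_1(X)g_2(X^t)]$ for $t\in[0,1]$. Since $X^t$ has the same law as $X$, both $g_1(X)$ and $g_2(X^t)$ are square integrable, so $\Psi$ is well defined, and using $X^t\to X^{t_0}$ a.s.\ as $t\to t_0$ together with the uniform integrability supplied by the $L^2$ hypotheses, $\Psi$ is continuous on $[0,1]$. At the endpoints $\Psi(1)=\E[g_1(X)g_2(X)]$ while $\Psi(0)=\E[g_1(X)]\,\E[g_2(\widetilde X)]=\E[g_1(X)]\,\E[g_2(X)]$ by independence of $X$ and $\widetilde X$, so $\Cov[g_1(X),g_2(X)]=\Psi(1)-\Psi(0)$ and it suffices to compute $\Psi'$.

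I would first treat the case where $g_1,g_2$ are smooth and bounded with bounded derivatives (this class suffices for the approximation below). For $t\in(0,1)$ one may differentiate under the expectation — legitimate by dominated convergence, since for $t$ in a compact subinterval of $(0,1)$ the Gaussian vector $\dot X^t:=\frac{d}{dt}X^t=X-\tfrac{t}{\sqrt{1-t^2}}\widetilde X$ has bounded covariance — obtaining $\Psi'(t)=\E\bigl[g_1(X)\,\nabla g_2(X^t)\cdot\dot X^t\bigr]=\sum_j\E\bigl[\dot X^t_j\,g_1(X)\,\partial_j g_2(X^t)\bigr]$. The key point is the covariance structure of the jointly Gaussian vector $(X,X^t,\dot X^t)$: a direct computation gives $\E[\dot X^t_j X_i]=K_{ij}$ and $\E[\dot X^t_j X^t_k]=tK_{jk}-\sqrt{1-t^2}\cdot\tfrac{t}{\sqrt{1-t^2}}K_{jk}=0$, so $\dot X^t$ is independent of $X^t$ while having covariance $K_{ij}$ with $X_i$. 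Applying the Gaussian integration-by-parts identity $\E[Yh(Z)]=\sum_k\E[YZ_k]\,\E[\partial_k h(Z)]$ with $Y=\dot X^t_j$ and $Z=(X,X^t)$, the contributions from differentiating in the $X^t$-block all vanish, leaving $\Psi'(t)=\sum_{i,j}K_{ij}\,\E\bigl[\partial_i g_1(X)\,\partial_j g_2(X^t)\bigr]$.

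To conclude in the smooth case I would integrate this over $t$: by Cauchy--Schwarz the integrand is bounded uniformly in $t$ by $\|\partial_i g_1(X)\|_{L^2}\|\partial_j g_2(X)\|_{L^2}$ (using $X^t\overset{d}{=}X$), so $\int_0^1\Psi'(t)\,dt$ is absolutely convergent, and combined with the fundamental theorem of calculus on $[\epsilon,1-\epsilon]$ and continuity of $\Psi$ up to the endpoints this yields the claimed identity. For general absolutely continuous $g_i$ with $g_i(X),\|\nabla g_i(X)\|\in L^2$, I would approximate $g_i$ by smooth functions (truncation followed by mollification) chosen so that $g_i^{(m)}(X)\to g_i(X)$ and $\nabla g_i^{(m)}(X)\to\nabla g_i(X)$ in $L^2$, apply the formula to $g_i^{(m)}$, and pass to the limit in both sides.

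The hard part will be the two places requiring care rather than any conceptually deep step: the apparent blow-up of $\dot X^t$ as $t\uparrow 1$, which is precisely why one integrates by parts rather than differentiating naively — after the integration by parts the integrand is uniformly bounded in $t$ — and the density argument reducing absolutely continuous $g_i$ to smooth ones, where the approximation must be controlled in the Gaussian-weighted $L^2$ norm rather than a flat one.
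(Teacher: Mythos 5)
Your argument is correct: the paper gives no proof of its own but defers to Chatterjee's Lemma 3.4 in \cite{cha08}, and that proof is exactly the interpolation scheme you describe (differentiate $t\mapsto\E[g_1(X)g_2(X^t)]$, use that $\dot X^t$ is uncorrelated with $X^t$ but has covariance $K_{ij}$ with $X_i$, apply Gaussian integration by parts, then pass from smooth bounded functions to general absolutely continuous ones by approximation in the Gaussian-weighted Sobolev norm). So your proposal is essentially the same approach as the one the paper relies on, with the technical caveats you flag (endpoint behaviour as $t\uparrow 1$ and the mollification step) being the standard routine ones.
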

The proof of this statement uses only calculus and some standard properties of Gaussian vectors. Comparing this result with Theorem~\ref{t:cov_formula_events} reiterates our interpretation of $d_x\ind_{A}$ as a type of derivative. The appearance of the measure $\mu^t_{x,y}$, rather than $\E$, can be compared to the Kac-Rice formula and reflects the fact that the topological derivative can only be non-zero if the field has a critical point at level $\ell$.

\subsection{Covariance formula for functionals}
We now extend the covariance formula to a general class of topological functionals.

\underline{Topological functionals:} Let $\Xi$ denote the set of triples $(D,\mathcal{F}, E)$ where $(D,\mathcal{F})$ is a regular affine stratified set and $E\in\mathcal{E}_\mathcal{F}$. We define a \emph{topological functional} to be any function $\Phi:\Xi\to\R$ such that $\Phi$ is zero whenever $E=[\emptyset]_\mathcal{F}$. Given such a functional, in a slight abuse of notation we will write $\Phi(D,g)=\Phi(D, g, \ell)$ to denote the value of the functional applied to a particular affine stratified set $(D,\mathcal{F})$ and the stratified isotopy class $[\{g\geq\ell\}]_\mathcal{F}$, assuming that $g\in\mathrm{Reg}(D,\mathcal{F},\ell)$.

We extend our previous definition of topological derivative to this setting, that is
\begin{equation}\label{e:top_derivative_functional}
    d_x\Phi(D,g):=\lim_{\epsilon\searrow 0}\lim_{\delta\searrow0}\Big(\Phi(D,g+\rho_{\delta,\epsilon,x})-\Phi(D,g-\rho_{\delta,\epsilon,x})\Big)
\end{equation}
where $g\in\mathrm{Morse}_x(D,\mathcal{F},\ell)$ and $\rho_{\delta,\epsilon,x}$ is defined as before. We say that a topological functional $\Phi$ is \emph{Lipschitz} if there exists $C>0$ such that $\lvert d_x\Phi(D,g)\rvert\leq C$ for all $D$ and $g$ and we define $\|\Phi\|_\mathrm{Lip}$ to be the smallest such $C$. Note that this class is closed under composition with Lipschitz functions; that is, if $F:\R\to\R$ is Lipschitz and $\Phi$ is a Lipschitz topological functional, then so is $F\circ\Phi$. Lipschitz topological functionals may be controlled in terms of the critical points of the underlying function:

\begin{lemma}\label{l:top_crit}
    Let $\Phi$ be a Lipschitz topological functional and $(D,\mathcal{F})$ a regular affine stratified set. For any $\ell\in\R$ and $g\in\mathrm{Morse}(D,\mathcal{F},\ell)$ for which all stratified critical points have distinct critical values,
    \[
        \lvert\Phi(D,g,\ell)\rvert\leq \|\Phi\|_\mathrm{Lip}\overline{N}_\mathrm{Crit}(D,g)
    \]
    where $\overline{N}_\mathrm{Crit}(D,g)$ denotes the number of stratified critical points of $g$ in $D$.
\end{lemma}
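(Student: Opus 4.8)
The plan is to sweep the level $\ell$ through all critical values of $g$ and read off the resulting changes in $\Phi$ as topological derivatives. Since $g$ is Morse on the compact stratified set $D$ it has finitely many stratified critical points; write $N:=\overline{N}_\mathrm{Crit}(D,g)$ and let $c_1<\dots<c_N$ be its critical values, which are pairwise distinct by hypothesis, with $x_i$ the unique stratified critical point at level $c_i$. Because $D$ is compact, $\max_D g$ is attained at a point which is necessarily a stratified critical point, so $\max_D g=c_N$ and $\{g\geq\lambda\}\cap D=\emptyset$ for every $\lambda>c_N$. For a regular value $\lambda$ of $g$ set $\psi(\lambda):=\Phi(D,g,\lambda)$; this is well defined, and by standard Morse theory (of the kind used throughout \cite{bmr20}) the stratified isotopy class $[\{g\geq\lambda\}]_\mathcal{F}$, and hence $\psi(\lambda)$, is constant on each connected component of the set of regular values of $g$. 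In particular $\psi\equiv0$ on $(c_N,\infty)$, as $\Phi$ vanishes on the empty class.

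The crux will be the \emph{jump identity}: for each $i\in\{1,\dots,N\}$ and any regular values $\lambda^-\in(c_{i-1},c_i)$ and $\lambda^+\in(c_i,c_{i+1})$ (with the conventions $c_0:=-\infty$, $c_{N+1}:=+\infty$),
\begin{displaymath}
    \psi(\lambda^-)-\psi(\lambda^+)=d_{x_i}\Phi(D,g,c_i),
\end{displaymath}
where the right-hand side is meaningful because the critical values being distinct forces $g\in\mathrm{Morse}_{x_i}(D,c_i)$. To prove this I would check that, for all sufficiently small $\epsilon,\delta>0$, the excursion set $\{g+\rho_{\delta,\epsilon,x_i}\geq c_i\}$ is stratified-isotopic to $\{g\geq\lambda^-\}$ and $\{g-\rho_{\delta,\epsilon,x_i}\geq c_i\}$ is stratified-isotopic to $\{g\geq\lambda^+\}$: away from $x_i$ the level $c_i$ is regular and the perturbation changes nothing up to isotopy, whereas near $x_i$ adding (resp.\ subtracting) a small positive bump performs exactly the handle modification that $\{g\geq\lambda\}$ undergoes as $\lambda$ drops below (resp.\ rises above) $c_i$. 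This is the same Morse-theoretic input that makes the topological derivative well defined (\cite[Lemma~5.4 and Remark~5.6]{bmr20}, \cite[Lemma~12]{bmm24b}); the only extra observation is that, since $c_i$ is the only critical value in the relevant window, a localized positive bump and a global lowering of the level have the same effect on the isotopy class.

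Granting the jump identity, the bound follows by telescoping. As $\ell$ is a regular value of $g$ we have $\ell\in(c_j,c_{j+1})$ for some $j\in\{0,\dots,N\}$; fix regular values $\mu_i\in(c_{i-1},c_i)$ for $i=j+1,\dots,N+1$ with $\mu_{j+1}=\ell$. Then, using constancy of $\psi$ on intervals of regular values and the jump identity applied with $\lambda^-=\mu_i$, $\lambda^+=\mu_{i+1}$,
\begin{displaymath}
    \Phi(D,g,\ell)=\psi(\mu_{j+1})=\sum_{i=j+1}^{N}\big(\psi(\mu_i)-\psi(\mu_{i+1})\big)+\psi(\mu_{N+1})=\sum_{i=j+1}^{N}d_{x_i}\Phi(D,g,c_i),
\end{displaymath}
since $\psi(\mu_{N+1})=0$. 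Bounding each summand by $\|\Phi\|_\mathrm{Lip}$ yields $\lvert\Phi(D,g,\ell)\rvert\leq(N-j)\|\Phi\|_\mathrm{Lip}\leq\|\Phi\|_\mathrm{Lip}\,\overline{N}_\mathrm{Crit}(D,g)$, which is the claim; in fact only critical points with value exceeding $\ell$ contribute.

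The main obstacle is the jump identity — more precisely, reconciling the localized perturbation used to define $d_x\Phi$ with a global change of level in a \emph{stratified} setting: one must ensure that the Morse-theoretic isotopies respect the strata and that the handle attachment at $x_i$ behaves as expected even when $x_i$ lies in a positive-codimension stratum near $\partial D$. All of this is already implicit in the framework of \cite{bmr20}, so I would present it as a short deduction from their lemmas rather than redevelop the Morse theory.
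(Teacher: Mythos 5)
Your proposal is correct and follows essentially the same route as the paper: sweep the level upward from $\ell$, use stratified Morse theory to see that $\Phi(D,g,\lambda)$ is constant between critical values and vanishes above $\max_D g$, identify the jump at each critical value with the topological derivative (via the same isotopy between the bump perturbation and a small shift of level, citing the same lemmas), and telescope to get the bound $\|\Phi\|_{\mathrm{Lip}}$ per critical point. The paper's proof is exactly this argument, phrased as two claims plus the triangle inequality.
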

\begin{proof}
    We consider how the value of $\Phi$ changes when we change the value of $\ell$. Let $\ell_1<\ell_2<\dots<\ell_n$ be the stratified critical values of $g$ which are greater than $\ell=:\ell_0$ (note that $g$ has only finitely many stratified critical points since they are all non-degenerate and $D$ is compact). We argue that $\Phi(D,g,\ell^\prime)$ does not change as $\ell^\prime$ varies between consecutive critical values and that the change when passing through a critical value is at most $\|\Phi\|_{\mathrm{Lip}}$. Assuming these two claims, by the triangle inequality we deduce that
    \[
        \lvert \Phi(D,g,\ell)\rvert\leq\lvert\Phi(D,g,\ell_n+1)\rvert+n\|\Phi\|_\mathrm{Lip}.
    \]
    This is enough to conclude since $n\leq\overline{N}_\mathrm{Crit}(D,g)$ and $\ell_n$ is the maximum of $g$, so $\{g\geq\ell_n+1\}=\emptyset$ meaning that $\Phi(D,g,\ell_n+1)=0$ by our definition of a topological functional.

    It remains to prove the two claims. It is a standard result in (stratified) Morse theory that if $g$ has no stratified critical values in the interval $[a,b]$ then $\{g\geq a\}$ and $\{g\geq b\}$ lie in the same stratified isotopy class, proving the first claim. A rigorous statement and proof of this result can be found in \cite[Part I, Chapter 3.2]{gm88} (in a very general form) or in \cite[Lemma~12]{bmm24b} (in a setting very similar to the current one).
    
    The second claim follows from the definition of a Lipschitz topological functional if we can show for all $\epsilon,\delta,\gamma>0$ sufficiently small that $\{g+\gamma\geq\ell_i\}$ lies in the same stratified isotopy class as $\{g+\rho_{\delta,\epsilon,x}\geq\ell_i\}$ where $x$ is a stratified critical point of $g$ at level $\ell_i$. Once again this can be deduced from the Morse theory result \cite[Lemma~12]{bmm24b}, since interpolating between $g+\gamma$ and $g+\rho_{\delta,\epsilon,x}$ cannot create any stratified critical points.
\end{proof}

We will often combine this result with bounds on the moments of the number of critical points of a Gaussian field. (The fact that all stratified critical points of a Gaussian field have distinct critical values follows from Bulinskaya's lemma \cite[Lemma~11.2.10]{at07}.) Given a regular affine stratified set $(D,\mathcal{F})$ and a $C^2$ function $g:\R^d\to\R$, let $N_\mathrm{Crit}(F,g)$ denote the number of stratified critical points of $g$ belonging to the stratum $F\in\mathcal{F}$ (or equivalently, the number of critical points of $g|_F$).

\begin{lemma}\label{l:crit_moments}
    Let $f$ satisfy Assumption~\aref{a:basic} and $(D,\mathcal{F})$ be an affine stratified set.
    \begin{enumerate}
        \item For any $F\in\mathcal{F}$, $\E[N_{\mathrm{Crit}}(F,f)]\leq Cv_F(F)$ where $C>0$ depends only on $f$.
        \item If $f$ is $C^{k+1}$-smooth almost surely for $k\in\N$, then there exists $C>0$ depending only on $f$ such that for all $R\geq 1$
        \[
        \E\big[\big\lvert\overline{N}_\mathrm{Crit}(\Lambda_R,f)\big\rvert^k\big]\leq CR^{kd}
        \]
        where the stratification of $\Lambda_R$ consists of its open faces of all dimensions.
    \end{enumerate}
\end{lemma}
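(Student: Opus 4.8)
The plan is to prove both parts via the Kac--Rice formula for moments of the number of critical points, applied stratum by stratum. For a $C^2$ stationary Gaussian field $f$ satisfying Assumption~\ref{a:minimal}, and a $k$-dimensional stratum $F$, the critical points of $f|_F$ are the zeros of the (random) gradient field $\nabla|_F f$, which is an $\R^k$-valued centred Gaussian field whose components and derivatives form a non-degenerate Gaussian vector at each finite collection of distinct points (this non-degeneracy is exactly the fact recorded in the proof of Lemma~\ref{l:conditioned_field_morse}, coming from the spectral measure containing an open set, cf.\ \cite[Lemma~13]{bmm24b}). So I would first verify the hypotheses of the Kac--Rice meta-theorem \cite[Theorem~11.2.1 and Corollary~11.2.2]{at07} for the $q$-th factorial moment of the zero count of $\nabla|_F f$: the field is $C^1$, the distribution of $\nabla|_F f(x)$ has a bounded density near $0$ uniformly in $x$ (by stationarity it is independent of $x$), and the relevant conditional moments of the Jacobian $\nabla|_F^2 f(x)$ are finite and bounded (Gaussian moments of a fixed non-degenerate vector, again uniform in $x$ by stationarity). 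The needed a.s.\ regularity (no degenerate zeros) is Bulinskaya's lemma \cite[Lemma~11.2.10]{at07}, as already invoked in the excerpt.

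For part (1), the Kac--Rice formula for the first moment gives
\begin{displaymath}
    \E[N_\mathrm{Crit}(F,f)]=\int_F \E\big[\big\lvert\det\nabla|_F^2 f(x)\big\rvert\;\big|\;\nabla|_F f(x)=0\big]\,p_{\nabla|_F f(x)}(0)\;dv_F(x),
\end{displaymath}
and by stationarity the integrand is a constant $C_F$ depending only on the law of $f$ (and on $F$ only through the dimension $k$ of its affine hull and the axis directions it spans), so the integral equals $C_F\,v_F(F)$. Taking $C=\max_k C_F$ over the finitely many possible stratum types gives the claim. For the zero-dimensional strata the convention $N_\mathrm{Crit}(F,f)=1$ and $v_F(F)=1$ makes the bound trivial.

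For part (2), write $\overline{N}_\mathrm{Crit}(\Lambda_R,f)=\sum_{F}N_\mathrm{Crit}(F,f)$ over the $O(1)$ faces of $\Lambda_R$. Since $(\sum_i a_i)^3\leq 3^d \sum_i a_i^3$ for the fixed number $3^d$ of faces (or simply bound by a constant times the cube of the sum over the $d$-dimensional face plus lower-order contributions), it suffices to bound $\E[N_\mathrm{Crit}(F,f)^3]$ for each face $F$, and the dominant term is the top-dimensional face. Using $N^3 = N(N-1)(N-2) + 3N(N-1) + N$, it is enough to bound the third, second and first factorial moments. The $q$-th factorial moment is given by the Kac--Rice formula as
\begin{displaymath}
    \int_{F^q}\E\Big[\prod_{i=1}^q\big\lvert\det\nabla|_F^2 f(x_i)\big\rvert\;\Big|\;\nabla|_F f(x_1)=\dots=\nabla|_F f(x_q)=0\Big]\,p_{(\nabla|_F f(x_1),\dots,\nabla|_F f(x_q))}(0,\dots,0)\;dx_1\cdots dx_q,
\end{displaymath}
for $q\in\{1,2,3\}$. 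The main obstacle is the standard one in such arguments: the integrand can blow up as the points $x_i$ collide, because the joint Gaussian vector $(\nabla|_F f(x_i))_i$ degenerates. The resolution is classical (see e.g.\ \cite{npr} or the treatment in \cite{at07, bll22}): away from the diagonal the integrand is bounded by a constant (uniformly, using Assumption~\ref{a:minimal}(3) to control the joint density and conditional moments for well-separated points, and compactness/continuity for points at bounded separation), contributing $O(R^{3d})$ after integrating over $F^q\subset \Lambda_R^q$; near the diagonal one performs a Taylor expansion of $\nabla|_F f$ around a base point, extracting the determinant factors as the Jacobian of the change of variables, which exactly cancels the singularity of the density and yields a locally integrable integrand. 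Integrating the local contribution over a unit neighbourhood of the diagonal and translating over $\Lambda_R$ again gives $O(R^{3d})$ (here $R\geq 1$ ensures the diagonal and off-diagonal regimes combine into a single $O(R^{3d})$ bound). Collecting the three factorial moments gives $\E[\overline{N}_\mathrm{Crit}(\Lambda_R,f)^3]\leq CR^{3d}$ with $C$ depending only on $f$ and $d$. I would cite an existing reference for the near-diagonal integrability estimate rather than reproduce it, since it is entirely standard for smooth Gaussian fields with non-degenerate finite-dimensional distributions.
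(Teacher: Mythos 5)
Your part (1) is essentially the paper's argument verbatim: Kac--Rice for the first moment, stationarity to make the integrand constant, and finiteness of the possible stratum directions to get a single constant. For part (2), however, you take a genuinely different route. You expand $N^3$ into factorial moments and bound the two- and three-point Kac--Rice integrals over $F^q\subset\Lambda_R^q$ directly, splitting into an off-diagonal regime (uniform boundedness of the integrand, which does require the covariance decay of Assumption~\ref{a:minimal}(3) to keep the joint Gaussian vector non-degenerate at large separations, plus compactness at bounded separations) and a near-diagonal regime, where you delegate the integrability of the blown-up density to a standard reference. The paper instead avoids any multi-point Kac--Rice analysis: it covers $\Lambda_R$ by $O(R^d)$ unit cubes, cites \cite{gs23} for finiteness of the third moment of the critical point count in a single unit cube, and then gets the $R^{3d}$ scaling from H\"older's inequality and stationarity alone, repeating the same counting argument on the lower-dimensional faces. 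Both arguments outsource the genuinely delicate step (near-diagonal behaviour of higher-order critical-point intensities) to the literature, so your proposal has no real gap; the trade-off is that your version keeps the estimate at the level of intensity functions and would also yield explicit factorial-moment bounds, while the paper's box-decomposition is shorter, needs only a one-point (unit-box) moment bound as input, and makes the $R$-dependence completely mechanical. One small quantitative slip: with $m=3^d$ faces the elementary inequality gives $(\sum_i a_i)^3\le m^2\sum_i a_i^3$, so the constant is $9^d$ rather than $3^d$, which of course does not affect the conclusion.
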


\begin{proof}
    By stationarity, we may assume that $F$ is contained in a subspace $S$ of $\R^d$. Then by the Kac-Rice theorem \cite[Corollary~11.2.2]{at07} and stationarity once more,
    \begin{align*}
        \E\big[N_\mathrm{Crit}(F,f)]&=\int_{F}\E[\lvert\det\nabla^2|_Ff(x)\rvert\;\big|\;\nabla|_Ff(x)=0\big]\phi_{\nabla|_Ff(x)}(0)\;dv_F(x)=cv_F(F)
    \end{align*}
    where $\phi_{X}$ denotes the density of the random vector $X$ and $c>0$ depends only on the distribution of $f$ and the basis vectors spanning $S$. By definition of a regular affine stratified space, these basis vectors must be a subset of the vectors parallel to the coordinate axes. Hence $c$ may take only a finite number of values for a given field $f$, proving the first point of the lemma.

    For $R\geq 1$, let $A_R:=\{x\in\Z^d\;|\;(x+\Lambda_1)\cap\Lambda_R\neq\emptyset\}$, and for $x\in\R^d$, let $N_x:=N_\mathrm{Crit}(x+\Lambda_1,f)$. It follows from \cite[Theorem~1.2]{gs23} that $N_x$ has finite $k$-th moment. Then by H\"older's inequality and stationarity
    \[
        \E[N_\mathrm{Crit}(\Lambda_R,f)^k]\leq\E\Big[\Big(\sum_{x\in A_R}N_x\Big)^k\Big]=\sum_{x_1,\dots,x_k\in A_R}\E[N_{x_1}\dots N_{x_k}]\leq \sum_{x_1,\dots,x_k\in A_R}\E[N_0^k]\leq cR^{kd}
    \]
    where $c>0$ depends only on $f$. Applying a near identical argument to the lower dimensional faces of $\Lambda_R$ (which we take as our strata) verifies the second point of the lemma.
\end{proof}

Extending the covariance formula to topological functionals makes use of an approximation argument along with dominated convergence. This requires bounds on the pivotal measures which were established in \cite{bmm24b}. We denote the total variation of a measure $\mu$ by $\|\mu\|$.

\begin{proposition}[{\cite[Proposition~2]{bmm24b}}]\label{p:intensity_integrable}
    Let $f$ satisfy Assumption~\aref{a:basic} and $(D_1,\mathcal{F}_1)$, $(D_2,\mathcal{F}_2)$ be regular affine stratified sets with $F_1\in\mathcal{F}_1$ and $F_2\in\mathcal{F}_2$. There exists $c>0$, depending only on the distribution of $f$ and the level $\ell$, such that
    \begin{equation}\label{e:pivotal_bounds}
        \sup_{x\in F_1}\int_0^1\int_{\substack{y\in F_2\\\lvert y-x\rvert\leq 1}}\|\mu^t_{x,y}\|+\|\mu^t_{y,x}\|\;dtdy<c\qquad\text{and}\qquad\sup_{t\in[0,1)}\sup_{\substack{(x,y)\in F_1\times F_2\\\lvert x-y\rvert\geq 1}}\|\mu^t_{x,y}\|+\|\mu^t_{y,x}\|<c.
    \end{equation}
\end{proposition}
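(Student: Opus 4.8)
The plan is to identify the total variation of $\mu^t_{x,y}$ with a (conditional) Kac--Rice density and then bound it by standard second-moment estimates for critical points of smooth Gaussian fields, being careful that every constant is uniform in $t$ and in the strata. Since the weight $\lvert\det\nabla|_{F_x}^2 f(x)\rvert\,\lvert\det\nabla|_{F_y}^2 f^t(y)\rvert$ is non-negative, the measure $\mu^t_{x,y}$ is non-negative and
\begin{displaymath}
    \|\mu^t_{x,y}\|=\E\big[\lvert\det\nabla|_{F_x}^2 f(x)\rvert\,\lvert\det\nabla|_{F_y}^2 f^t(y)\rvert\mid A^t_{x,y}\big]\,\phi^t_{x,y}.
\end{displaymath}
First I would make two reductions. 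Each stratum is a relatively open piece of a translate of one of the $2^d$ coordinate subspaces, so by stationarity of $f$ (hence of $f^t$, which has the law of $f$ for every $t$) both quantities in~\eqref{e:pivotal_bounds} depend on $(x,y)$ only through $y-x$, $t$, and the directions of $F_1$ and $F_2$; it therefore suffices to bound them uniformly in $t\in[0,1)$, in the relevant range of $y-x$, and over the finitely many pairs of subspace directions. Next I would observe that $\|\mu^t_{x,y}\|$ is exactly a conditional Kac--Rice density: writing $\mathcal C_x$ for the event $\{f(x)=\ell,\ \nabla|_{F_x}f(x)=0\}$, the Kac--Rice ``metatheorem'' (see \cite[Chapter~11]{at07}), applied to the level-$\ell$ critical points of $f^t|_{F_2}$ weighted by $\lvert\det\nabla|_{F_x}^2 f(x)\rvert$ and conditioned on $\mathcal C_x$, yields
\begin{displaymath}
    \int_{\substack{y\in F_2\\\lvert y-x\rvert\le 1}}\|\mu^t_{x,y}\|\;dy=\psi(\ell,0)\;\E\Big[\big\lvert\det\nabla|_{F_x}^2 f(x)\big\rvert\cdot N^t_x\;\Big|\;\mathcal C_x\Big],
\end{displaymath}
where $\psi$ is the Gaussian density of $(f(x),\nabla|_{F_x}f(x))$ and $N^t_x$ counts the points $y\in F_2$ with $\lvert y-x\rvert\le 1$ at which $f^t|_{F_2}$ has a critical point at level $\ell$. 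All applications of the Kac--Rice formula and Bulinskaya's lemma below are justified by the non-degeneracy of the Gaussian vectors built from $(f,\nabla f,\nabla^2 f)$, $(\widetilde{f},\nabla\widetilde{f},\nabla^2\widetilde{f})$, and hence from $f^t$, at finitely many distinct points (\cite[Lemma~13]{bmm24b}; cf.\ Lemma~\ref{l:conditioned_field_morse}).

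For the second bound in~\eqref{e:pivotal_bounds}, where $\lvert x-y\rvert\ge 1$, there is no diagonal singularity. I would show that the covariance matrix of $(f(x),f^t(y),\nabla|_{F_x}f(x),\nabla|_{F_y}f^t(y))$ has determinant bounded below uniformly over $t\in[0,1]$ and $\lvert x-y\rvert\ge 1$: on a compact range of $\lvert x-y\rvert$ this follows from continuity and non-degeneracy (which holds even at $t=1$, the base points being distinct), while as $\lvert x-y\rvert\to\infty$ the cross-covariances between the two blocks vanish by Assumption~\ref{a:minimal}(3), so the determinant tends to the positive product of the block determinants, uniformly in $t$. Hence $\phi^t_{x,y}\le C$. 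The conditional law of $(\nabla|_{F_x}^2 f(x),\nabla|_{F_y}^2 f^t(y))$ given $A^t_{x,y}$ is then Gaussian with mean and covariance bounded uniformly in this regime (Gaussian regression against a uniformly non-degenerate vector), and since a determinant is a polynomial of degree at most $d$ in the matrix entries, the Cauchy--Schwarz inequality bounds the conditional expectation of the product of the two determinants by the product of the square roots of their conditional second moments, each of which is bounded. This establishes the second bound.

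The first bound, for $\lvert x-y\rvert\le 1$, is the substantive case. In the Kac--Rice identity above the prefactor $\psi(\ell,0)$ is bounded (the field is normalised, finitely many stratum directions), and Cauchy--Schwarz reduces matters to a bound on $\E[(N^t_x)^2\mid\mathcal C_x]$ that is uniform in $x$ and $t\in[0,1)$, after which one integrates the resulting constant over $t$. By Gaussian regression, $f^t$ conditioned on $\mathcal C_x$ is a centred $C^4$ Gaussian field plus a bounded deterministic field, whose covariance structure --- including its behaviour near $x$ --- is controlled uniformly in $x$ (by stationarity) and, for $t$ bounded away from $1$, uniformly in $t$; in that range the second moment of $N^t_x$ is controlled by the usual Kac--Rice integrals, the potentially dangerous second-factorial-moment integral being finite because the weights $\lvert\det\nabla|_{F_y}^2 f^t(y)\rvert\,\lvert\det\nabla|_{F_{y'}}^2 f^t(y')\rvert$ vanish along the diagonal $y=y'$ quickly enough to absorb the blow-up of the two-point Gaussian density there. \textbf{The main obstacle is the uniformity of this near-diagonal estimate as $t\uparrow 1$}: for fixed $t<1$ the vector $(f(x),f^t(y),\nabla|_{F_x}f(x),\nabla|_{F_y}f^t(y))$ stays non-degenerate even on the diagonal $y=x$, but at $t=1$ the set $\{y=x,\ t=1\}$ is a genuine singularity, where $\phi^t_{x,y}$ blows up and the limiting object is the classical two-point critical-point density of $f$. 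To control this uniformly I would carry out a Taylor expansion of $f$ and $\widetilde{f}$ near the diagonal that tracks both $\lvert y-x\rvert$ and $1-t$: conditioning on $\nabla|_{F_x}f(x)=0$ and $\nabla|_{F_y}f^t(y)=0$ with $y$ close to $x$ forces the Hessians at $x$ and $y$, contracted against $y-x$, to be small, producing cancelling powers that tame the blow-up of $\phi^t_{x,y}$ with constants that do not deteriorate as $t\uparrow 1$. Combined with the case of $t$ bounded away from $1$, this yields the required uniform bound on $\E[(N^t_x)^2\mid\mathcal C_x]$ and completes the proof.
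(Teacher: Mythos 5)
Your reduction for the regime $\lvert x-y\rvert\geq 1$ is fine in outline (uniform non-degeneracy over $t\in[0,1]$ on compacta by the spectral assumption, plus decay of cross-covariances at infinity, then Gaussian regression and Cauchy--Schwarz), but the near-diagonal case contains two genuine problems. First, the conditional Kac--Rice identity you base everything on is false as written: for $t<1$, conditionally on $\mathcal{C}_x$ the field $f^t$ retains an independent $\widetilde{f}$-component, so by Bulinskaya's lemma it almost surely has \emph{no} critical points at level exactly $\ell$, i.e.\ $N^t_x=0$ a.s., while the left-hand side $\int_{\lvert y-x\rvert\leq1}\|\mu^t_{x,y}\|\,dy$ is strictly positive. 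The quantity $\|\mu^t_{x,y}\|$ is a two-point density in the level variable as well as in space; to make your reduction correct you must keep the level of $f^t(y)$ as a density (or a window-limit), and then the subsequent Cauchy--Schwarz step "reduce to $\E[(N^t_x)^2\mid\mathcal{C}_x]$" no longer applies in the form you use it.

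Second, and more importantly, the heart of the proposition is exactly the joint degeneracy of $(f(x),f^t(y),\nabla f(x),\nabla f^t(y))$ as $t\uparrow1$ \emph{and} $y\to x$, and at that point your argument is only a plan ("a Taylor expansion ... producing cancelling powers that tame the blow-up"), not a proof: this cancellation, carried out quantitatively, is precisely the divided-difference estimate that constitutes the content of the cited \cite[Proposition~2]{bmm24b} (the paper itself does not reprove it; it quotes it and names the method). Note also that you aim for a bound on the $y$-integral that is \emph{uniform} in $t$, which is stronger than what the statement requires and than what the known estimates deliver: by Lemma~\ref{l:dc_bound} one has $\|\mu^t_{x,y}\|\lesssim I(t,\lvert x-y\rvert)^{-1/2}$, and after integrating over $\lvert y-x\rvert\leq1$ this still carries a factor of order $(1-t)^{-1/2}$ (up to logarithms) near $t=1$ --- integrable in $t$, which is all \eqref{e:pivotal_bounds} needs, but not bounded. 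So the correct strategy is to prove a quantitative lower bound on $\det\Cov[f(x),f^t(y),\nabla f(x),\nabla f^t(y)]$ in terms of both $1-t$ and $\lvert x-y\rvert$ (as in $I(t,a)$), bound the conditional determinant moments uniformly, and only then integrate in $y$ \emph{and} $t$; without that estimate the proof is incomplete.
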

The proof of this result involves using the `divided difference' method to control the degeneracy of the Gaussian vector $(f(x),f^t(y),\nabla|_{F_1} f(x),\nabla|_{F_2} f^t(y))$ as $t\to 1$ and $x\to y$.

We may now state our covariance formula for topological functionals:

\begin{theorem}\label{t:cov_formula}
    Let $F,G:\R\to\R$ be Lipschitz, $(D_1,\mathcal{F}_1)$ and $(D_2,\mathcal{F}_2)$ be regular affine stratified sets, $\Phi,\Psi$ be Lipschitz topological functionals and let $f$ satisfy Assumption~\aref{a:basic}. Then denoting $\Phi_1=\Phi(D_1,f)$ and $\Psi_2=\Psi(D_2,f)$,
    \begin{equation}\label{e:cov_formula}
        \Cov[F(\Phi_1),G(\Psi_2)]=\int_{\mathcal{F}_1\times\mathcal{F}_2}K(x-y)\int_0^1\mu^t_{x,y}(d_xF(\Phi_1)d_y^tG(\Psi_2))\;dtdxdy.
    \end{equation}
\end{theorem}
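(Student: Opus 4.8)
The plan is to reduce Theorem~\ref{t:cov_formula} to the covariance formula for events (Theorem~\ref{t:cov_formula_events}) via a two-stage approximation: first approximate $F$ and $G$ by simple (piecewise constant) functions, and then recognize the covariance of simple functions of topological functionals as a finite linear combination of covariances of topological \emph{events}, to which Theorem~\ref{t:cov_formula_events} applies. The key point is that for a fixed Lipschitz topological functional $\Phi$ and a fixed regular affine stratified set $(D,\mathcal{F})$, the random variable $\Phi_1 = \Phi(D,f)$ takes values in a countable set (since $\mathcal{E}_\mathcal{F}$ is countable), so $\{\Phi_1 \in B\}$ is a topological event for any Borel $B \subseteq \R$; moreover, by Lemma~\ref{l:top_crit} combined with Lemma~\ref{l:crit_moments}, $\Phi_1$ has finite moments (in particular, finite variance), controlled by the number of critical points of $f$ in $D$.

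First I would fix, for each $n \in \N$, a partition of the range of $F$ into dyadic intervals and set $F_n := \sum_k a_{k,n}\ind_{I_{k,n}}$ with $a_{k,n}$ chosen so that $\|F - F_n\|_\infty \to 0$ on any compact interval and $|F_n| \leq |F(0)| + \|F\|_\mathrm{Lip}|\cdot|$ uniformly in $n$; similarly for $G_n$. Since $\Phi_1$ is finite a.s., $F_n(\Phi_1) \to F(\Phi_1)$ a.s., and the uniform linear growth bound together with the $L^2$ bound on $\Phi_1$ (from Lemma~\ref{l:top_crit} and the third-moment bound of Lemma~\ref{l:crit_moments}) gives $L^2$ convergence, hence $\Cov[F_n(\Phi_1), G_n(\Psi_2)] \to \Cov[F(\Phi_1), G(\Psi_2)]$. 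Next, for each fixed $n$, $\Cov[F_n(\Phi_1), G_n(\Psi_2)]$ expands as a (possibly infinite, but absolutely summable by the $L^2$ bounds) linear combination $\sum_{k,l} a_{k,n} b_{l,n}\Cov[\ind_{\{\Phi_1 \in I_{k,n}\}}, \ind_{\{\Psi_2 \in I_{l,n}\}}]$ of covariances of topological events, and Theorem~\ref{t:cov_formula_events} applies to each term. By linearity of the topological derivative in $\ind_A$ (which follows from its definition in~\eqref{e:top_derivative_events} as a difference of indicators, respecting finite — and, after justifying interchange, countable — sums), $\sum_{k} a_{k,n} d_x\ind_{\{\Phi_1 \in I_{k,n}\}} = d_x F_n(\Phi_1)$ pointwise on $\mathrm{Morse}_x$, and similarly for $G_n$; so the right-hand sides assemble into $\int_{\mathcal{F}_1 \times \mathcal{F}_2} K(x-y)\int_0^1 \mu^t_{x,y}(d_x F_n(\Phi_1) d_y^t G_n(\Psi_2))\,dt\,dx\,dy$.

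It then remains to pass to the limit $n \to \infty$ on the right-hand side. Here I would use dominated convergence with respect to the measure $K(x-y)\,\mu^t_{x,y}(\cdot)\,dt\,dx\,dy$ on $\mathcal{F}_1 \times \mathcal{F}_2 \times [0,1]$. The pointwise convergence $d_x F_n(\Phi_1) d_y^t G_n(\Psi_2) \to d_x F(\Phi_1) d_y^t G(\Psi_2)$ (on the event $A^t_{x,y}$, where both sides are well defined by Lemma~\ref{l:conditioned_field_morse}) follows from the Lipschitz property: $|d_x F_n(\Phi_1)| \leq \|F_n\|_\mathrm{Lip}|d_x\Phi_1| \leq (\|F\|_\mathrm{Lip} + o(1))\|\Phi\|_\mathrm{Lip}$, and since $F_n \to F$ uniformly on the (a.s.\ bounded) range of $\Phi_1$, the jump $d_x F_n(\Phi_1)$ converges to $d_x F(\Phi_1)$. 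For the domination, the uniform bound $|d_x F_n(\Phi_1) d_y^t G_n(\Psi_2)| \leq C\|F\|_\mathrm{Lip}\|G\|_\mathrm{Lip}\|\Phi\|_\mathrm{Lip}\|\Psi\|_\mathrm{Lip}$ combined with the integrability statement $\int_{\mathcal{F}_1 \times \mathcal{F}_2}\int_0^1 |K(x-y)|\,\|\mu^t_{x,y}\|\,dt\,dx\,dy < \infty$ — which is a consequence of Proposition~\ref{p:intensity_integrable} together with $\sup|K| < \infty$ and $D_1, D_2$ compact (splitting into the regions $|x-y| \leq 1$ and $|x-y| \geq 1$) — provides an integrable majorant independent of $n$.

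The main obstacle I anticipate is the careful bookkeeping in justifying that the topological derivative commutes with the (countable) sum defining $F_n(\Phi_1)$, and that all the interchanges of limits and sums are legitimate: this requires knowing that for $f$ fixed (in $\mathrm{Morse}_x$) the perturbation $f \pm \rho_{\delta,\epsilon,x}$ changes the isotopy class of $\{f \geq \ell\}$ in only one `direction', so that $d_x F_n(\Phi_1)$ genuinely equals $F_n(\Phi(D, f+\rho)) - F_n(\Phi(D, f-\rho))$ for small $\epsilon, \delta$ — which in turn needs the Morse-theoretic stability already invoked after~\eqref{e:top_derivative_functional}. A secondary technical point is handling the tails: the intervals $I_{k,n}$ range over an unbounded set, so strictly one should first truncate $F$ and $G$ to compact range, control the truncation error using the $L^2$ bound on $\Phi_1, \Psi_2$, and only then apply the finite simple-function approximation; this is routine but must be done explicitly. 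A very similar argument is carried out in \cite{bmm24b} for the component count, so I would follow that template closely.
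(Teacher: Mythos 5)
Your proposal is correct and follows essentially the same route as the paper's proof: reduce to the event-level formula of Theorem~\ref{t:cov_formula_events} via simple-function approximation of $F$ and $G$ (after truncating to bounded range, with the truncation error controlled by the square-integrability of $\Phi_1,\Psi_2$ coming from Lemmas~\ref{l:top_crit} and~\ref{l:crit_moments}), then pass to the limit on both sides by dominated convergence using Lemma~\ref{l:conditioned_field_morse}, the bound $\lvert d_xF(\Phi_1)\rvert\leq\|F\|_\mathrm{Lip}\|\Phi\|_\mathrm{Lip}$, and the pivotal-measure estimates of Proposition~\ref{p:intensity_integrable}. The only caveat is cosmetic: for a genuinely simple (step) approximant $F_n$ the bound $\lvert d_xF_n(\Phi_1)\rvert\leq\|F_n\|_\mathrm{Lip}\lvert d_x\Phi_1\rvert$ is not literally available, but your uniform-approximation bound $\|F-F_n\|_\infty\leq\|F\|_\mathrm{Lip}2^{-n}$ supplies the needed domination, so this is not a gap.
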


\begin{proof}
    For any Borel sets $B_1,B_2\subseteq\R$, $\{\Phi_1\in B_1\}$ and $\{\Psi_2\in B_2\}$ are topological events to which we can apply the covariance formula Theorem~\ref{t:cov_formula_events}. Therefore by linearity, \eqref{e:cov_formula} holds whenever $F$ and $G$ are simple measurable functions.
    
    If $F$ and $G$ are bounded, continuous functions then we can choose two sequences $(F_n)$ and $(G_n)$ which are simple measurable, uniformly bounded and converge pointwise to $F$ and $G$ respectively. For any given $x\in D_1$, $y\in D_2$ with $x\neq y$ and $t\in[0,1)$, by Lemma~\ref{l:conditioned_field_morse} on the complement of a $\mu^t_{x,y}$-null set
    \[
        f\in\mathrm{Morse}_x(D_1,\mathcal{F}_1,\ell)\qquad\text{and}\qquad f^t\in\mathrm{Morse}_y(D_2,\mathcal{F}_2,\ell).
    \]
    Therefore, by the observations following \eqref{e:top_derivative_events}, there are two (random) values $c_1,c_2\in\R$ such that for all $\epsilon,\delta>0$ sufficiently small
    \[
        \Phi(D_1,f+\rho_{\delta,\epsilon,x})=c_1\qquad\text{and}\qquad\Phi(D_1,f-\rho_{\delta,\epsilon,x})=c_2.
    \]
    Then using the convergence of $(F_n)$ and the definition of the topological derivative
    \[
        \lim_{n\to\infty}d_xF_n(\Phi_1)=\lim_{n\to\infty}F_n(c_1)-F_n(c_2)=F(c_1)-F(c_2)=d_xF(\Phi_1).
    \]
    An analogous argument shows that $\lim_{n\to\infty}d_y^tG_n(\Psi_2)=d_y^tG(\Psi_2)$. Hence, since $\lvert d_xF_n(\Phi_1)\rvert\leq2\sup_n\|F_n\|_\infty$ and $\lvert d_xG_n(\Phi_1)\rvert\leq2\sup_n\|G_n\|_\infty$, by dominated convergence
    \[
        \lim_{n\to\infty}\mu^t_{x,y}(d_xF_n(\Phi_1)d_y^tG_n(\Psi_2))=\mu^t_{x,y}(d_xF(\Phi_1)d_y^tG(\Psi_2)).
    \]
    Then using the bounds $\lvert\mu^t_{x,y}(d_xF_n(\Phi_1)d_y^tG_n(\Psi_2))\rvert\leq 4\sup_n\|F_n\|_\infty\|G_n\|_\infty\|\mu^t_{x,y}\|$ and $\lvert K\rvert\leq 1$ along with \eqref{e:pivotal_bounds}, we can apply the dominated convergence theorem once more to conclude that as $n\to\infty$
    \begin{align*}
        \Cov[F_n(\Phi_1),G_n(\Psi_2)]\to \int_{\mathcal{F}_1\times\mathcal{F}_2}K(x-y)\int_0^1\mu^t_{x,y}(d_xF(\Phi_1)d_y^tG(\Psi_2))\;dtdxdy.
    \end{align*}
    Using the uniform boundedness of $F_n$ and $G_n$ once more, we see that
    \[
        \mathrm{Cov}[F_n(\Phi_1),G_n(\Psi_2)]\to\Cov[F(\Phi_1),G(\Psi_2)],
    \]
    which finally establishes \eqref{e:cov_formula} when $F$ and $G$ are bounded and continuous.

    Finally we consider the case that $F$ and $G$ are assumed only to be Lipschitz. The truncated functions
    \[
        F_n:=(F\wedge n)\vee (-n)\qquad\text{and}\qquad G_n:=(G\wedge n)\vee (-n)
    \]
    are uniformly Lipschitz and converge pointwise to $F$ and $G$ respectively as $n\to\infty$. We can extend the covariance formula using the same dominated convergence argument as before, making use of the two following facts: 1) $F_n(\Phi_1)$ and $G_n(\Psi_2)$ are dominated by $\lvert F(\Phi_1)\rvert$ and $\lvert G(\Psi_2)\rvert$ respectively, which are square-integrable by Lemma~\ref{l:crit_moments}, and 2)
    \[
        \lvert d_xF_n(\Phi_1)\rvert\leq \|F\|_\mathrm{Lip}\|\Phi\|_\mathrm{Lip}\qquad\text{and}\qquad \lvert d_y^tG_n(\Psi_2)\rvert\leq \|G\|_\mathrm{Lip}\|\Psi\|_\mathrm{Lip},
    \]
    by definition of the topological derivative and Lipschitz topological functions.    
\end{proof}

Quasi-association of Lipschitz topological functionals is a straightforward consequence of this covariance formula. For $A\subset\R^d$ and $c>0$, let $A^{+c}$ denote the set of points with Euclidean distance at most $c$ from $A$ and let $\lvert A\rvert$ denote the $d$-dimensional Lebesgue measure of $A$. Throughout this and subsequent proofs, we let $C$ denote a positive constant, the value of which may change between occurrences.

\begin{corollary}[Quasi-association of Lipschitz topological functionals]\label{c:quasi_association}
    Let $(D_1,\mathcal{F}_1)$ and $(D_2,\mathcal{F}_2)$ be regular affine stratified sets and suppose that there exists $m>0$ such that for $i=1,2$ and any $x\in\R^d$, $B(x,1)$ intersects at most $m$ strata of $D_i$. Under the assumptions of Theorem~\ref{t:cov_formula}, there exists $C>0$ depending only on $\ell$ and the distribution of $f$ such that
    \begin{equation}\label{e:quasi_assoc_top}
        \big\lvert \Cov[F(\Phi_1),G(\Psi_2)]\big\rvert\leq Cm^2 \|F\|_{\mathrm{Lip}}\|G\|_{\mathrm{Lip}}\|\Phi\|_\mathrm{Lip}\|\Psi\|_{\mathrm{Lip}}\int_{D_1^{+2}\times D_2^{+2}} \widetilde{K}(x-y)\;dxdy
    \end{equation}
    where we recall that $\widetilde{K}(u):=\sup_{\lvert v-u\rvert\leq 1}\lvert K(v)\rvert$.
\end{corollary}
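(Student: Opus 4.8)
The plan is to start from the covariance formula of Theorem~\ref{t:cov_formula} and bound each of the three ingredients in the integrand --- the kernel $K(x-y)$, the pivotal measure $\mu^t_{x,y}$, and the topological derivatives $d_xF(\Phi_1)$, $d^t_yG(\Psi_2)$ --- and then control the resulting integral using Proposition~\ref{p:intensity_integrable}. First I would observe that, exactly as in the final step of the proof of Theorem~\ref{t:cov_formula}, the topological derivatives satisfy the pointwise bounds $\lvert d_xF(\Phi_1)\rvert\leq\|F\|_{\mathrm{Lip}}\|\Phi\|_{\mathrm{Lip}}$ and $\lvert d^t_yG(\Psi_2)\rvert\leq\|G\|_{\mathrm{Lip}}\|\Psi\|_{\mathrm{Lip}}$ on the complement of a $\mu^t_{x,y}$-null set (by Lemma~\ref{l:conditioned_field_morse}, $f\in\mathrm{Morse}_x(D_1,\ell)$ and $f^t\in\mathrm{Morse}_y(D_2,\ell)$ there, so the topological derivatives reduce to differences of $F$, $G$ evaluated at two nearby values of the functional, and Lipschitzness of $F\circ\Phi$ and $G\circ\Psi$ closes the estimate). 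Consequently
\begin{displaymath}
    \big\lvert\mu^t_{x,y}\big(d_xF(\Phi_1)d^t_yG(\Psi_2)\big)\big\rvert\leq\|F\|_{\mathrm{Lip}}\|G\|_{\mathrm{Lip}}\|\Phi\|_{\mathrm{Lip}}\|\Psi\|_{\mathrm{Lip}}\,\|\mu^t_{x,y}\|.
\end{displaymath}

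Next, bounding $\lvert K(x-y)\rvert$ crudely by $\widetilde{K}(x-y)$ and taking absolute values through the integral in \eqref{e:cov_formula}, it remains to show
\begin{displaymath}
    \int_{\mathcal{F}_1\times\mathcal{F}_2}\widetilde{K}(x-y)\int_0^1\|\mu^t_{x,y}\|\;dt\,dx\,dy\leq Cm^2\int_{D_1^{+2}\times D_2^{+2}}\widetilde{K}(x-y)\;dx\,dy
\end{displaymath}
where the left-hand integral is the stratified integral $\sum_{F_1\in\mathcal{F}_1}\sum_{F_2\in\mathcal{F}_2}\int_{F_1\times F_2}(\cdots)\,dv_{F_1}dv_{F_2}$. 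Here I would split the inner region into the near-diagonal part $\{\lvert x-y\rvert\leq 1\}$ and the far part $\{\lvert x-y\rvert\geq 1\}$. On the far part, $\|\mu^t_{x,y}\|\leq c$ uniformly by the second bound in \eqref{e:pivotal_bounds}, and since $\widetilde{K}(x-y)$ is essentially constant on unit balls one absorbs the finitely-many lower-dimensional strata of $D_i$ into the $d$-dimensional integral over the $2$-enlargement $D_i^{+2}$ --- here the hypothesis that $B(x,1)$ meets at most $m$ strata of each $D_i$ is what produces the $m^2$ factor, since a given pair $(x,y)$ in $D_1^{+2}\times D_2^{+2}$ "covers" at most $m^2$ pairs of strata at distance $\leq 1$ from it, each contributing a bounded amount. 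On the near-diagonal part, one integrates out $y$ first using the first bound in \eqref{e:pivotal_bounds}: $\sup_{x\in F_1}\int_0^1\int_{y\in F_2,\,\lvert y-x\rvert\leq 1}\|\mu^t_{x,y}\|\,dt\,dy<c$, so after summing over the (at most $m$) strata $F_2$ meeting $B(x,1)$ this contributes $O(m)$ per point $x$, and then integrating $x$ over $\mathcal{F}_1$ --- again replacing the stratified integral by an integral over $D_1^{+2}$ at the cost of another factor $m$ --- together with the lower bound $\widetilde{K}\geq$ (its value nearby) gives the claimed bound with the constant $Cm^2$.

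The main obstacle, and the step requiring the most care, is the bookkeeping that converts the stratified integrals $\int_{\mathcal{F}_i}$ (which include Dirac masses at $0$-dimensional strata and $k$-dimensional Hausdorff-type measures on $k$-dimensional strata) into an honest $d$-dimensional Lebesgue integral over the thickened sets $D_i^{+2}$, uniformly in how $D_i$ is stratified. The key geometric input is the hypothesis that every unit ball meets at most $m$ strata of $D_i$: this lets one assign to each stratum $F\subset D_i$ and each point $z\in F$ a unit-volume "charge" spread over $(F)^{+1}\subset D_i^{+2}$, so that the total charge landing in any unit ball is $O(m)$, turning $\int_{\mathcal{F}_i}h\,dv\lesssim m\int_{D_i^{+2}}\sup_{B(\cdot,1)}h$ for non-negative $h$ that are roughly constant on unit balls (which $\widetilde{K}$ and the uniform pivotal bound both are). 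I would state this as a short lemma or inline estimate and then the rest is routine. Everything else --- the pointwise derivative bound, passing absolute values through, splitting near/far diagonal --- is immediate given the results already in the paper.
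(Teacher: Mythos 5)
Your proposal is correct and follows essentially the same route as the paper's proof: apply the covariance formula of Theorem~\ref{t:cov_formula}, bound the topological derivatives by $\|F\|_{\mathrm{Lip}}\|\Phi\|_{\mathrm{Lip}}$ and $\|G\|_{\mathrm{Lip}}\|\Psi\|_{\mathrm{Lip}}$, split into $\lvert x-y\rvert\geq 1$ and $\lvert x-y\rvert\leq 1$ using the two bounds of Proposition~\ref{p:intensity_integrable}, and convert the stratified integrals into Lebesgue integrals over the thickened sets via the $m$-strata-per-unit-ball hypothesis (your ``charge-spreading'' lemma is exactly the paper's inline estimate $\int_F\lvert h\rvert\,dv_F\leq c_d\int_{F^{+1/2}}\sup_{B(u,1/2)}\lvert h\rvert\,du$). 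The one imprecision is the claim that $\widetilde{K}$ is ``essentially constant on unit balls'': it need not be, so you should carry $\lvert K\rvert$ (not $\widetilde{K}$) through the spreading step, since the unit-ball supremum of $\lvert K\rvert$ is exactly $\widetilde{K}$, which is how the paper obtains the stated kernel without an extra enlargement.
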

\begin{proof}
    By Theorem~\ref{t:cov_formula} and the fact that $F$, $G$, $\Phi$ and $\Psi$ are Lipschitz
    \[
        \big\lvert \Cov[F(\Phi_1),G(\Psi_2)]\big\rvert\leq \|F\|_{\mathrm{Lip}}\|G\|_{\mathrm{Lip}}\|\Phi\|_\mathrm{Lip}\|\Psi\|_{\mathrm{Lip}}\sum_{\substack{F_1\in\mathcal{F}_1\\F_2\in\mathcal{F}_2}}\int_{F_1\times F_2}\lvert K(x-y)\rvert\int_0^1\|\mu^t_{x,y}\|\;dtdv_{F_1}(x)dv_{F_2}(y).
    \]
    We consider separately the contribution to this integral when $\lvert x-y\rvert$ is greater than or less than one. In what follows, we repeatedly make use of the fact that, for a subset $F$ of an affine subspace of $\R^d$ and a continuous function $h:\R^d\to\R$
    \[
        \int_F \lvert h(u)\rvert\;dv_F(u)\leq c_d\int_{F^{+1/2}}\sup_{v\in B(u,1/2)}\lvert h(v)\rvert\;du
    \]
    for some constant $c_d>0$ depending only on the dimension $d$.
    
    By the second of our bounds on the pivotal measure (in Proposition~\ref{p:intensity_integrable}), the previous observation and our assumption on $D_1$ and $D_2$,
    \begin{equation}\label{e:qa_proof1}
    \begin{aligned}
        \sum_{\substack{F_1\in\mathcal{F}_1\\F_2\in\mathcal{F}_2}}\int_{F_1\times F_2}\lvert K(x-y)\rvert\ind_{\lvert x-y\rvert\geq 1}\int_0^1\|\mu^t_{x,y}\|\;dtdv_{F_1}(x)dv_{F_2}(y)&\leq C\sum_{\substack{F_1\in\mathcal{F}_1\\F_2\in\mathcal{F}_2}}\int_{F_1^{+1}\times F_2^{+1}}\widetilde{K}(x-y)\;dxdy\\
        &\leq Cm^2\int_{D_1^{+1}\times D_2^{+1}}\widetilde{K}(x-y)\;dxdy.
    \end{aligned}
    \end{equation}
    By the first bound in Proposition~\ref{p:intensity_integrable}, our assumption on $D_2$ and the trivial bound $\lvert K\rvert\leq 1$, for any $x\in D_1$
    \[
        \sum_{F_2\in\mathcal{F}_2}\int_{F_2}\lvert K(x-y)\rvert\ind_{\lvert x-y\rvert\leq 1}\int_0^1\|\mu^t_{x,y}\|\;dtdv_{F_2}(y)\leq Cm\ind_{x\in D_2^{+1}}.
    \]
    It follows that
    \begin{equation}\label{e:qa_proof2}
    \begin{aligned}
        \sum_{\substack{F_1\in\mathcal{F}_1\\F_2\in\mathcal{F}_2}}\int_{F_1\times F_2}\lvert K(x-y)\rvert\ind_{\lvert x-y\rvert\leq 1}\int_0^1\|\mu^t_{x,y}\|\;dtdv_{F_1}(x)dv_{F_2}(y)&\leq Cm\sum_{F_1\in\mathcal{F}_1}\int_{F_1^{+1/2}}\ind_{x\in D_2^{+3/2}}\;dx\\
        &\leq Cm^2\lvert D_1^{+3/2}\cap D_2^{+3/2}\rvert.
    \end{aligned}
    \end{equation}
    Finally, using the fact that $\widetilde{K}(u)=1$ for $\lvert u\rvert\leq 1$ we have
    \[
        \int_{D_1^{+2}\times D_2^{+2}}\widetilde{K}(x-y)\;dxdy\geq \int_{D_1^{+3/2}\cap D_2^{+3/2}}\int_{B(x,1/2)}\widetilde{K}(x-y)\;dydx\geq c_d\lvert D_1^{+3/2}\cap D_2^{+3/2}\rvert
    \]
    for some $c_d>0$ depending only on the dimension $d$. Combining this with \eqref{e:qa_proof1} and \eqref{e:qa_proof2} completes the proof of quasi-association
\end{proof}

\subsection{Excursion/level-set functionals and the Euler characteristic}
To complete this section, we discuss the specific topological functionals which are of interest for our main results:

\underline{Bounded excursion/level-set functionals:} Given a regular affine stratified set $(D,\mathcal{F})$ and $A\subset\R^d$ we let $\mathrm{Comp}(A,D)$ denote the set of connected components of $A$ that intersect $D$ but not $\partial D$. Let $\Theta$ denote the space of triples $(D,\mathcal{F},[E]_\mathcal{F})$ where $(D,\mathcal{F})$ is a regular affine stratified set and $E\in\mathrm{Comp}(\{g\geq 0\},D)$ for some $g\in\mathrm{Reg}(D,\mathcal{F},0)$. We say that $\varphi:\Theta\to\R$ is an \emph{excursion component functional} if 
\[
    \varphi(D_1,\mathcal{F}_1,[E]_{\mathcal{F}_1})=\varphi(D_2,\mathcal{F}_2,[E]_{\mathcal{F}_2})
\]
for all $(D_1,\mathcal{F}_1,[E]_{\mathcal{F}_1}),(D_2,\mathcal{F}_2,[E]_{\mathcal{F}_2})\in\Theta$. Intuitively, this means that $\varphi$ depends only on the topology of the excursion component $E$ and not on the (regular affine stratified) set in which it is contained. The functional can therefore be expressed using the shortened notation $\varphi([E])$ to denote the value $\varphi(D,\mathcal{F},[E]_{\mathcal{F}})$ for any suitable $D$ containing the excursion component $E$.

We say that a topological functional $\Phi$ is an \emph{excursion-set functional} if it can be expressed in the form
\begin{equation}\label{e:es_functional2}
    \Phi(D,g,\ell)=\sum_{E\in\mathrm{Comp}(\{g\geq\ell\},D)}\varphi(D,\mathcal{F},[E]_{\mathcal{F}})
\end{equation}
where $\varphi$ is an excursion component functional. We say that $\Phi$ is a \emph{bounded excursion-set functional} if $\varphi$ is bounded in this representation.  We define a \emph{(bounded) level-set functional} analogously if we replace $\{f\geq\ell\}$ by $\{f=\ell\}$.

As a simple, but important, example of a bounded excursion set functional, taking $\varphi\equiv 1$ yields the component count (i.e., the number of connected components of the excursion set in the domain). Taking $\varphi$ to be $1$ for excursion components with a particular topology (say, doubly connected) and $0$ otherwise results in $\Phi$ being the number of excursion components with that topology. In general, the Betti numbers of the excursion set cannot be represented as a bounded excursion-set functional, since they can take arbitrarily large absolute value on a single component.

The reason we restrict $\varphi$ to be bounded, is that this ensures the change in the functional when passing through a critical point is also bounded, resulting in a Lipschitz topological functional:

\begin{lemma}\label{l:exc_lipschitz}
    If $\Phi$ is a bounded excursion/level-set functional, then it is also a Lipschitz topological functional.
\end{lemma}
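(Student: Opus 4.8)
The plan is to show that $\lvert d_x\Phi(D,g)\rvert$ is bounded by a fixed multiple of $\|\varphi\|_\infty$, uniformly over all regular affine stratified sets $(D,\mathcal{F})$, levels $\ell\in\R$, points $x\in D$ and functions $g\in\mathrm{Morse}_x(D,\ell)$; since $\varphi$ is bounded and $\Phi$ is already a topological functional by definition, this is exactly the assertion. By the definition \eqref{e:top_derivative_functional} of the topological derivative it suffices to bound $\lvert\Phi(D,g+\rho_{\delta,\epsilon,x},\ell)-\Phi(D,g-\rho_{\delta,\epsilon,x},\ell)\rvert$ by such a multiple of $\|\varphi\|_\infty$ for all $\epsilon,\delta>0$ small enough that $g^\pm:=g\pm\rho_{\delta,\epsilon,x}\in\mathrm{Reg}(D,\ell)$ (see the discussion after \eqref{e:top_derivative_events}); the limit then inherits the bound. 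Expanding both terms via \eqref{e:es_functional2}, everything reduces to understanding how the family of excursion components changes when the nonnegative bump $\rho_{\delta,\epsilon,x}$ is added to, or subtracted from, $g$ near the non-degenerate stratified critical point $x$.

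First I would localise. Since $\rho_{\delta,\epsilon,x}$ is supported in $\overline{B(x,\epsilon)}$, the three sets $\{g^+\geq\ell\}$, $\{g^-\geq\ell\}$ and $\{g\geq\ell\}$ coincide off $\overline{B(x,\epsilon)}$; call this common set $S$. A short point-set argument, using that each of the three sets contains $S$ and agrees with it off $\overline{B(x,\epsilon)}$, shows that for a slightly larger radius $\epsilon'>\epsilon$ the connected components of $\{g^+\geq\ell\}$ disjoint from $\overline{B(x,\epsilon')}$ are precisely the components of $S$ disjoint from $\overline{B(x,\epsilon')}$, and likewise for $\{g^-\geq\ell\}$ and $\{g\geq\ell\}$. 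Such ``far'' components are therefore literally the same subsets of $\R^d$ in all three cases; in particular they have the same intersection with $\partial D$, hence appear in the collections $\mathrm{Comp}(\{g^\pm\geq\ell\},D)$ with identical status, and since $\varphi$ is an excursion component functional (so depends only on the isotopy class of the component) they contribute the same value to both sums. The far components thus cancel, leaving
\begin{equation*}
    \Phi(D,g^+,\ell)-\Phi(D,g^-,\ell)=\sum_{\substack{E\in\mathrm{Comp}(\{g^+\geq\ell\},D)\\ E\cap\overline{B(x,\epsilon')}\neq\emptyset}}\varphi([E])-\sum_{\substack{E\in\mathrm{Comp}(\{g^-\geq\ell\},D)\\ E\cap\overline{B(x,\epsilon')}\neq\emptyset}}\varphi([E]).
\end{equation*}

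Second I would bound the number of terms on the right. Since points in distinct components of $\{g^\pm\geq\ell\}$ lie in distinct components of the subset $\{g^\pm\geq\ell\}\cap\overline{B(x,\epsilon')}$, it is enough to show that $\{g^\pm\geq\ell\}\cap\overline{B(x,\epsilon')}$ has at most a dimensional constant $C_d$ of connected components once $\epsilon,\epsilon'$ are small. This is a standard local computation at $x$: the Morse lemma applied to $g$ restricted to the stratum $F\ni x$, combined with the transversality built into the third defining property of a Morse function on $(D,\mathcal{F})$, shows that near $x$ the set $\{g\geq\ell\}\cap D$ has at most $C_d$ local components (in fact at most two), and adding the nonnegative bump $\rho_{\delta,\epsilon,x}$ alters this picture only through the elementary Morse surgery already invoked after \eqref{e:top_derivative_events}, which cannot raise the count beyond $C_d$. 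As $\lvert\varphi\rvert\leq\|\varphi\|_\infty$, the displayed difference is at most $2C_d\|\varphi\|_\infty$ in absolute value, uniformly in the data and in small $\epsilon,\delta$; letting $\epsilon,\delta\searrow0$ yields $\|\Phi\|_\mathrm{Lip}\leq 2C_d\|\varphi\|_\infty<\infty$. The bounded level-set case is handled identically, replacing $\{g\geq\ell\}$ by $\{g=\ell\}$ throughout, since the level set of $g$ near a non-degenerate critical point likewise has only a dimensional constant number of local components.

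I expect the main obstacle to be the localisation step: one must check carefully that an excursion component which avoids $\overline{B(x,\epsilon')}$ is genuinely unchanged by the perturbation, rather than being a proper piece of a component that gets glued together (or torn apart) inside $B(x,\epsilon)$. This is where a little point-set topology, together with the hypothesis that $x$ is the unique stratified critical point of $g$ at level $\ell$ (which also guarantees $g^\pm\in\mathrm{Reg}(D,\ell)$), really enters. The local count is routine, but a fully rigorous version requires a brief case analysis over the dimension of the stratum containing $x$ and the Morse index; this can be short-circuited by instead quoting the surgery description of excursion sets at non-degenerate critical points referenced after \eqref{e:top_derivative_events}.
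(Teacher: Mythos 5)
Your proposal is correct and takes essentially the same route as the paper: the point in both cases is that the perturbation at the non-degenerate stratified critical point $x$ changes only a uniformly bounded number of components, each contributing at most $\sup\lvert\varphi\rvert$, so the topological derivative is uniformly bounded. The paper gets this in one stroke by citing the stratified Morse theory statement of \cite[Lemma~2]{bmm24b} (yielding the bound $3\sup\lvert\varphi\rvert$), whereas you make the localisation away from $B(x,\epsilon)$ explicit and then bound the local component count near $x$; the only step you leave sketchy, the count at boundary (stratified) critical points, is exactly the surgery input that the cited lemma provides, so your argument ultimately rests on the same ingredient.
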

\begin{proof}
    It is immediate that $\Phi$ is a topological functional (in particular if $\{g\geq\ell\}=\emptyset$ then $\Phi(D,g,\ell)=0$ by \eqref{e:es_functional2}). We need only verify that the topological derivative of $\Phi$ is uniformly bounded. Let $(D,\mathcal{F})$ be a regular affine stratified set, $x\in D$ and $g\in\mathrm{Morse}_x(D,\mathcal{F},\ell)$. Once again, it is a standard result of stratified Morse theory that for $\delta,\epsilon>0$ sufficiently small, the number of components of $\{g+\rho_{\delta,\epsilon,x}\geq\ell\}$ and $\{g-\rho_{\delta,\epsilon,x}\geq\ell\}$ inside $D$ differ by at most one (and the same is true for the respective level sets). This statement was proven in \cite[Lemma~2]{bmm24b} for a stratified box, and the proof holds verbatim for a regular affine stratified set. It therefore follows from the definition of the topological derivative in \eqref{e:top_derivative_functional} that $\lvert d_x\Phi(D,g,\ell)\rvert$ is bounded above by three times the supremum of $\lvert \varphi\rvert$, which is bounded by assumption.
\end{proof}

We record a preliminary result regarding this class of functionals, which controls the contribution from components intersecting a given stratum. For a regular affine stratified set $(D,\mathcal{F})$, $g\in\mathrm{Reg}(D,\mathcal{F},\ell)$, a bounded excursion/level-set functional $\Phi$ and distinct strata $F_1,\dots,F_n\in\mathcal{F}$ we define $\Phi(D,g,\ell,\overline{\cup_{i=1}^nF_i})$ to be the restriction of the sum in \eqref{e:es_functional2} to components which intersect $\overline{\cup_{i=1}^nF_i}$.

\begin{lemma}\label{l:boundary_contribution}
    Given $(D,\mathcal{F})$, $g$, $\ell$, $\Phi$ and $F_1,\dots,F_n\in\mathcal{F}$ as above, there exists $C>0$ depending only on $\Phi$ such that
    \[
        \lvert\Phi(D,g,\ell,\overline{\cup_{i=1}^nF_i})\rvert\leq C\overline{N}_\mathrm{Crit}(\overline{\cup_{i=1}^nF_i},g).
    \]
\end{lemma}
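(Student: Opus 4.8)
The plan is to bound the number of excursion/level-set components meeting $\overline{\cup_i F_i}$ by a constant multiple of the number of critical points of $g$ in $\overline{\cup_i F_i}$, and then use boundedness of $\varphi$ to conclude. Since $\Phi(D,g,\ell,\overline{\cup_i F_i})$ is a sum of $\varphi([E])$ over components $E\in\mathrm{Comp}(\{g\ge\ell\},D)$ that intersect $\overline{\cup_i F_i}$, and $|\varphi|\le\sup_\Theta|\varphi|=:M<\infty$, it suffices to show that the number of such components is at most $\overline{N}_\mathrm{Crit}(\overline{\cup_i F_i},g)$, or at worst a dimension-independent constant times this; then we may take $C=c_0 M$.

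First I would recall the standard fact from (stratified) Morse theory — already invoked in the proof of Lemma~\ref{l:exc_lipschitz} via \cite[Lemma~2]{bmm24b} — that the number of connected components of $\{g\ge\ell'\}$ inside $D$ changes by at most one as $\ell'$ decreases past a stratified critical value, and does not change between consecutive critical values. Running $\ell'$ from $+\infty$ down to $\ell$, one sees that every component of $\{g\ge\ell\}$ in $D$ is `born' at a stratified critical point (a local maximum on some stratum, or a saddle that merges/creates components), so that the total number of components of $\{g\ge\ell\}$ meeting $D$ is at most $\overline{N}_\mathrm{Crit}(D,g)$. This is the well-known bound $b_0(\{g\ge\ell\}\cap D)\le \overline{N}_\mathrm{Crit}(D,g)$. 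The key refinement needed here is the \emph{localised} version: a component $E$ that intersects $\overline{\cup_i F_i}$ must, by the Morse-theoretic birth mechanism, be created at a critical point of $g$ restricted to $E$; since $E$ is connected and meets $\overline{\cup_i F_i}$, I would like to argue that it contains a stratified critical point lying in $\overline{\cup_i F_i}$ — but this is not literally true (the birth point of $E$ could lie far from $\overline{\cup_i F_i}$). The correct statement is instead obtained by restricting attention to the sub-excursion-set localised near $\overline{\cup_i F_i}$: take a small regular affine stratified neighbourhood $N$ of $\overline{\cup_i F_i}$ inside $D$ and observe that each component $E$ meeting $\overline{\cup_i F_i}$ restricts to at least one component of $\{g\ge\ell\}\cap N$, hence contributes a critical point of $g|_N$ in $N$; shrinking $N$ down to $\overline{\cup_i F_i}$ and using Bulinskaya-type genericity (or simply that $g$ is Morse with finitely many critical points, so for $N$ small enough all critical points of $g|_N$ lie in strata adjacent to $\overline{\cup_i F_i}$) gives the count $\overline{N}_\mathrm{Crit}(\overline{\cup_i F_i},g)$ up to a bounded combinatorial factor coming from how many lower-dimensional strata abut a given one.

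The main obstacle is exactly this localisation step: matching components of the \emph{global} excursion set that touch $\overline{\cup_i F_i}$ with critical points that actually lie \emph{in} $\overline{\cup_i F_i}$, rather than somewhere else in $D$. I expect the cleanest route is to invoke stratified Morse theory on the stratified set $\overline{\cup_i F_i}$ itself (with the induced stratification): by \cite[Part~I, Chapter~3]{gm88} the excursion set $\{g\ge\ell\}\cap\overline{\cup_i F_i}$ has the homotopy type of a stratified CW complex whose cells correspond to stratified critical points of $g$ restricted to $\overline{\cup_i F_i}$, so in particular the number of its connected components is at most $\overline{N}_\mathrm{Crit}(\overline{\cup_i F_i},g)$; and every component $E\in\mathrm{Comp}(\{g\ge\ell\},D)$ meeting $\overline{\cup_i F_i}$ contains at least one component of $\{g\ge\ell\}\cap\overline{\cup_i F_i}$, so distinct such $E$ are counted by distinct components there (they are even disjoint). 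Hence the number of components of $\{g\ge\ell\}$ in $D$ meeting $\overline{\cup_i F_i}$ is at most $\overline{N}_\mathrm{Crit}(\overline{\cup_i F_i},g)$, and the same argument with $\{g=\ell\}$ replaced for the level-set case. Multiplying by $M$ gives the claim with $C=M$ (the dimension-dependent constant is in fact not needed once one works directly on $\overline{\cup_i F_i}$). I would double-check the level-set case separately, since $\{g=\ell\}$ is not an excursion set, but the same Morse-theoretic component bound applies because $\{g=\ell\}$ deformation retracts onto (or is sandwiched between) boundaries of excursion sets at nearby levels, and passing a critical value changes its number of components by a bounded amount.
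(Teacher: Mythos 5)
Your final argument (stratified Morse theory applied to $g$ restricted to $\overline{\cup_i F_i}$, plus the injection of global components meeting $\overline{\cup_i F_i}$ into components of $\{g\ge\ell\}\cap\overline{\cup_i F_i}$) captures the right localisation idea, and the injection step is correct. But the paper's proof is much more elementary and avoids the machinery you invoke: for each contributing component $E$, the set $E\cap\overline{\cup_i F_i}$ is compact and non-empty, so $g$ attains a maximum on it; that maximum point lies in some stratum contained in $\overline{\cup_i F_i}$ and must be a stratified critical point (otherwise one could move within the stratum in the direction of increase and stay in $E\cap\overline{\cup_i F_i}$), and these points are distinct for distinct components. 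This gives the bound with $C=\sup\lvert\varphi\rvert$ directly. Importantly, it needs no non-degeneracy: the lemma assumes only $g\in\mathrm{Reg}(D,\ell)$, whereas your appeal to \cite{gm88} requires $g$ to be a stratified Morse function on $\overline{\cup_i F_i}$ (non-degenerate stratified critical points and the condition on higher-dimensional strata). As written, your proof establishes the lemma only for Morse $g$; if $g$ has degenerate critical points the CW-complex theorem does not apply, while the maximum-point argument still does. (This restriction would be harmless for the paper's probabilistic applications, where the field is almost surely Morse, but it does not prove the lemma as stated.)

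The second genuine gap is the level-set case, which you only gesture at. The claim that $\{g=\ell\}$ ``deformation retracts onto or is sandwiched between'' excursion boundaries, with a bounded change in component count per critical value, is not an argument: bounding $b_0$ of $\{g=\ell\}\cap\overline{\cup_i F_i}$ by the number of critical points is not a formal consequence of the $b_0$ bound for superlevel sets (boundary components of a region are controlled by higher Betti numbers, not by $b_0$), so you would need a separate surgery-type analysis at each critical point, again under a Morse hypothesis. The paper instead notes that, since $\ell$ is a regular value, each level-set component $L$ lying inside $D$ surrounds a component $E$ of $\{g>\ell\}$ or of $\{g<\ell\}$, the map $L\mapsto E$ is injective, and then reruns the extremum argument on $\overline{E}\cap\overline{\cup_i F_i}$ (taking a maximum or a minimum as appropriate). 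I would recommend adopting that route: it settles both cases in a few lines and in the stated generality.
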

\begin{proof}
    First suppose that $\Phi$ is an excursion set functional. Let $\overline{F}:=\overline{\cup_{i=1}^nF_i}$. By definition of a stratification, we can decompose this as $\overline{F}=\cup_{i=1}^mF_i$ for strata $F_1,\dots,F_m\in\mathcal{F}$ where $m\geq n$. If $E$ is an element of $\mathrm{Comp}(\{g\geq\ell\},D)$ which intersects $\overline{F}$, then $g|_{E\cap\overline{F}}$ attains a maximum at some point. This point must be contained in some stratum $F_i$ for $i\in\{1,\dots,m\}$ and must therefore be a stratified critical point. Clearly such point are distinct for different excursion components. From the definition of an excursion set functional in \eqref{e:es_functional2}, we have
    \[
        \lvert \Phi(D,g,\ell)\rvert\leq \sup\lvert\varphi\rvert \overline{N}_\mathrm{Crit}(\overline{F},g)
    \]
    which proves the statement since $\varphi$ is bounded by assumption.

    Suppose now that $\Phi$ is a level-set functional. Since $g$ has no critical points at level $\ell$, each level set component $L$ which is inside $D$ must be a closed hypersurface that surrounds some component $E$ of $\{g>\ell\}$ or of $\{g<\ell\}$ (i.e., $E$ is contained in the bounded component of the complement of $L$). Clearly this mapping from $L$ to $E$ is injective. Repeating the argument for excursion set components, we can find either a maximum or minimum of $g|_{\overline{E}\cap\overline{F}}$ which must be a stratified critical point and yields the required bound on $\Phi$.
\end{proof}

\underline{Euler characteristic:} We recall that the Euler characteristic is an integer-valued topological invariant defined for sufficiently nice sets. For instance, for suitable planar sets, it is equal to the number of connected components of the set minus the number of `holes' in the set and analogous interpretations are possible in higher dimensions (see \cite[Chapter~6]{at07}). We consider the case in which $\Phi(D,\mathcal{F},E)$ is the Euler characteristic of $E$. In contrast with the definition of an excursion/level-set functional, this definition includes components which intersect the boundary of $D$.

It follows from the Poincar\'e-Hopf theorem that the Euler characteristic of an excursion set may be defined as a signed sum over the number of critical points of the (Morse) function defining the excursion set. To avoid unnecessary definitions, we will not specify exactly how the sign for boundary critical points is determined.

\begin{lemma}\label{l:ec_decomp}
    Let $(D,\mathcal{F})$ be a regular affine stratified set, $g\in\mathrm{Reg}(D,\mathcal{F},\ell)\cap\mathrm{Morse}(D,\mathcal{F},\ell)$ and $\Phi$ be the Euler characteristic, then
    \begin{equation}\label{e:ec_decomp}
        \Phi(D,g,\ell)=\sum_{i=0}^d(-1)^{d-i}N_\mathrm{Crit}^{(i)}(D,g,\ell)+\sum_{F\in\mathcal{F}^{(<d)}}\sum_{x\in\overline{N}_\mathrm{Crit}(F,g)}\alpha(x,g(x),\nabla g(x),\nabla^2g(x),\mathcal{F})
    \end{equation}
    where $N_\mathrm{Crit}^{(i)}(D,g,\ell)$ denotes the number of critical points $x$ of $g$ in $D$ such that $g(x)\geq\ell$ and $\nabla^2 g(x)$ has exactly $i$ negative eigenvalues, $\mathcal{F}^{(<d)}$ denotes the subset of all strata in $\mathcal{F}$ with dimension less than $d$ and $\alpha$ is a function taking values in $\{-1,0,1\}$.
\end{lemma}
\begin{proof}
    This immediately follows from \cite[Corollary~9.3.5]{at07} (see also (9.4.1)-(9.4.5) of \cite{at07} for the particular case that $D$ is a box).
\end{proof}

This decomposition ensures that our topological functional is Lipschitz:

\begin{lemma}\label{l:ec}
    The Euler characteristic is a Lipschitz topological functional.
\end{lemma}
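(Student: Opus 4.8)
We must check two things: that the Euler characteristic, regarded as the map $\Phi\colon (D,\mathcal{F},[E]_\mathcal{F})\mapsto \chi(E)$, is a topological functional, and that its topological derivative is uniformly bounded. The first point is essentially bookkeeping. For any $g\in\mathrm{Reg}(D,\ell)$ the regularity of $g$ on every stratum makes $\{g=\ell\}$ transverse to all faces of $D$, so that $\{g\geq\ell\}\cap D$ is a compact set of the kind for which the Euler characteristic is constructed in \cite[Chapters~6 and~11]{at07}; being a homeomorphism invariant, $\chi$ depends only on the stratified isotopy class $[\{g\geq\ell\}]_\mathcal{F}\in\mathcal{E}_\mathcal{F}$, and $\chi(\emptyset)=0$. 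Hence $\Phi$ lies in the class of topological functionals, and the substance of the lemma is the Lipschitz bound.

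For the Lipschitz bound, fix a regular affine stratified set $(D,\mathcal{F})$, a point $x\in D$ and $g\in\mathrm{Morse}_x(D,\ell)$, and recall that $d_x\Phi(D,g)=\lim_{\epsilon\searrow 0}\lim_{\delta\searrow 0}\big(\Phi(D,g+\rho_{\delta,\epsilon,x})-\Phi(D,g-\rho_{\delta,\epsilon,x})\big)$. Since $\chi$ is additive on compact nice sets and $g\pm\rho_{\delta,\epsilon,x}$ coincides with $g$ outside a ball $B$ of radius $O(\epsilon)$ centred at $x$, the contributions supported away from $x$ cancel in the difference, so that $d_x\Phi(D,g)$ is determined by the Euler characteristic of $\{g\pm\rho_{\delta,\epsilon,x}\geq\ell\}\cap D\cap\overline{B}$; by the stratified Morse lemma, near the non-degenerate stratified critical point $x$ the function $g$ is equivalent to a standard quadratic model, and a direct computation (the stratified handle-attachment picture, equivalently the Poincar\'e--Hopf theorem; see \cite[Chapters~6, 9 and~11]{at07}) shows that this local Euler characteristic difference is bounded in absolute value by $1$ (it equals $(-1)^{\mathrm{ind}(x)}$, or $0$ for a boundary critical point failing the relevant normal condition, where $\mathrm{ind}(x)$ is the appropriate stratified Morse index). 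Hence $\lvert d_x\Phi(D,g)\rvert\leq 1$ uniformly in $D$ and $g$, so $\Phi$ is Lipschitz with $\|\Phi\|_\mathrm{Lip}\leq 1$. Equivalently one can phrase the argument globally: by Poincar\'e--Hopf, $\Phi(D,g,\ell)$ is a signed sum of $\pm1$ over the stratified critical points of $g$ with critical value exceeding $\ell$, and then, exactly as in the proof of Lemma~\ref{l:top_crit}, $\Phi(D,g,\ell')$ is constant in $\ell'$ between consecutive critical values and $d_x\Phi(D,g)$ is precisely the contribution of the single critical point $x$.

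The one place that demands genuine care is the behaviour at $\partial D$: because the excursion set is a manifold with corners rather than a closed manifold, the classical Poincar\'e--Hopf count must be replaced by its stratified analogue, and one must verify that a stratified critical point lying on a lower-dimensional face still contributes at most $\pm1$ to $\chi$. This is where the product structure of the box stratification, together with the third defining condition of a Morse function on a stratified set (non-vanishing of the gradient of $g$ along each higher-dimensional face containing the stratum of the critical point), enters; the required local computation in the precise setting of boxes and their face stratifications is the content of \cite[Chapters~9 and~11]{at07}. Everything else is routine, and we note in passing that the same signed-count representation immediately yields the stabilisation and locality properties of $\Phi$ asserted in the surrounding text.
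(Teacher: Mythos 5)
Your proof is correct and takes essentially the same route as the paper: well-definedness and homeomorphism invariance of the Euler characteristic via \cite[Corollary~9.3.5]{at07}, and the Lipschitz bound from the signed stratified-critical-point (Poincar\'e--Hopf) count combined with the stratified Morse isotopy argument, so that the topological derivative equals the contribution of the single critical point $x$ and hence has absolute value at most one. Your local excision/handle-attachment phrasing is only a cosmetic variant of this same computation.
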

\begin{proof}
    Let $(D,\mathcal{F})$ be a regular affine stratified set and $\overline{g}\in\mathrm{Reg}(D,\mathcal{F},0)$. It is possible to find a $g\in\mathrm{Morse}(D,\mathcal{F},0)\cap\mathrm{Reg}(D,\mathcal{F},0)$ which has the same excursion set as $\overline{g}$ in $D$ and so by Lemma~\ref{l:ec_decomp}, the Euler characteristic of $\{g\geq 0\}\cap D$ is well defined. The Euler characteristic is well known to be invariant under homeomorphism, so in particular is uniquely defined for any element of the stratified isotopy class of $\{g\geq 0\}$. Since the Euler characteristic of the empty set is zero, we have shown that it is a topological function.

    Given $x\in D$, $\ell\in\R$ and $g\in\mathrm{Morse}_x(D,\mathcal{F},\ell)$, we require a uniform bound on $\lvert d_x\Phi(D,g)\rvert$ as defined in \eqref{e:top_derivative_functional}. By Lemma~\ref{l:ec_decomp}, $\Phi(D,g\pm\rho_{\delta,\epsilon,x},\ell)$ may be decomposed in terms of the stratified critical points of the two functions. For $\epsilon,\delta>0$ sufficiently small, we see that each such stratified critical point, apart from $x$, will contribute in the same way for each function. Hence the topological derivative has absolute value at most one, and so $\Phi$ is Lipschitz.
\end{proof}

\section{Additivity and stabilisation}\label{s:additivity+stabilisation}
In this section, we formalise the additivity and stabilisation properties of topological functionals described in Section~\ref{ss:proof_outline}.

\subsection{Additivity}\label{ss:additivity}
We begin by proving an approximate additivity result, which says that the value of a topological functional on a macroscopic box (of scale $R$) is well approximated by summing its value over mesoscopic boxes (of scale $r\ll R$) which are well separated (at scale $a\gg 1$). We therefore define sequences $(a_R)_{R\geq 1}$ and $(r_R)_{R\geq 1}$ such that
\begin{equation}\label{e:scales}
    \begin{aligned}
    \frac{r_R}{a_R}\in 2\N\qquad\text{and}\qquad\frac{R}{r_R+4a_R}\in 2\N\qquad\text{for each }R\geq 20,\\
    a_R,r_R\to\infty\qquad\text{and}\qquad \frac{a_R}{r_R},\frac{r_R}{R}\to 0\qquad\text{as }R\to\infty.
    \end{aligned}
\end{equation}
Below we will write $a=a_R$ and $r=r_R$ for notational clarity. These conditions allow us to tile $\Lambda_R$ with mesoscopic cubes of side length $r+4a$ which in turn can be tiled by microscopic cubes of side length $a$ (although we emphasise that $a$ is large, so is only microscopic with respect to $R$), see Figure~\ref{fig:box_stratification}. We define
\[
    \chi_R:=\Lambda_R\cap(r+4a)(1/2+\Z)^d\qquad\text{and}\qquad U_R=\overline{\Lambda_R\setminus\bigcup_{x\in\chi_R}x+\Lambda_r}
\]
to be the set of centres of mesoscopic cubes and the (closed) region that is left when removing all mesoscopic cubes respectively. Below, we abbreviate $\Phi(D)=\Phi(D,f,\ell)$ whenever the field $f$ and level $\ell$ are clear from context.

\begin{proposition}[Additivity]\label{p:additivity}
    Let $f$ satisfy Assumptions~\aref{a:basic} and \aref{a:cov_decay_int} and suppose that $(a_R)_{R\geq 1}$ and $(r_R)_{R\geq 1}$ satisfy \eqref{e:scales}. If $\Phi$ is a bounded excursion/level-set functional, then as $R\to\infty$
    \[
        \Var\Big[\Phi(\Lambda_R)-\sum_{x\in\chi_R}\Phi(x+\Lambda_r)\Big]=o(R^d).
    \]
    If $\Phi$ is the Euler characteristic, then the left-hand side is $O(ar^{-1}R^d)=o(R^d)$ as $R\to\infty$.
\end{proposition}

The proof of this result for the Euler characteristic is relatively straightforward, since the latter is a local functional and therefore (exactly) additive. To avoid a precise (and somewhat irrelevant) definition of the general sets on which such additivity holds, we will give only the statement that we need for Gaussian excursion sets.

\begin{lemma}\label{l:ec_additivity}
    Let $f$ be a Gaussian field satisfying Assumption~\aref{a:basic}, $\Phi$ be the Euler characteristic and $\ell\in\R$. Suppose that $D_1,D_2,D_1\cup D_2,D_1\cap D_2\subset\R^d$ can each be expressed as a finite union of closed faces (of any dimension) of boxes in $\R^d$, then with probability one
    \[
        \Phi(D_1\cup D_2)=\Phi(D_1)+\Phi(D_2)-\Phi(D_1\cap D_2).
    \]
    In particular, in the setting of Proposition~\ref{p:additivity}, with probability one
    \begin{equation}\label{e:ec_additivity}
        \Phi(\Lambda_R)=\sum_{x\in\chi_R}\Phi(x+\Lambda_r)+\Phi(U_R)-\sum_{x\in\chi_R}\Phi(x+\partial\Lambda_r).
    \end{equation}
\end{lemma}
\begin{proof}
    \cite[Theorem~6.1.1]{at07} states that for $A$, $B$, $A\cup B$ and $A\cap B$ belonging to a class of sets known as `basic complexes', $\EC(A\cup B)=\EC(A)+\EC(B)-\EC(A\cap B)$ where we temporarily denote the Euler characteristic by $\EC$. It is shown in \cite[Theorem 6.2.2]{at07} that the excursion sets of `suitably regular' functions on boxes are basic complexes. The proof of this result generalises trivially to finite unions of closed faces of boxes (and in particular to the domains $x+\partial\Lambda_r$ and $U_R$). Finally it follows from \cite[Theorem 11.3.3]{at07} that a Gaussian field satisfying Assumption~\aref{a:basic} is suitably regular on any regular affine stratified set with probability one. Combining these observations proves the result.
\end{proof}

\begin{proof}[Proof of Proposition~\ref{p:additivity} (Euler characteristic)]
    Let $\Phi$ be the Euler characteristic, by quasi-association (Corollary~\ref{c:quasi_association}) applied to $U_R$ equipped with its minimal stratification,
    \[
        \Var[\Phi(U_R)]\leq C\int_{(U_R^{+2})^2}\widetilde{K}(x-y)\;dxdy\leq C\lvert U_R^{+2}\rvert\int_{\R^d}\widetilde{K}(x)\;dx=O(ar^{-1}R^d)
    \]
    where $C>0$ is independent of $R$ and the final equality can be justified by dividing $U_R$ into $(R/(r+4a))^d$ regions which are translations of $\overline{\Lambda_{r+4a}\setminus\Lambda_{r}}$, each of which has $d$-dimensional Lebesgue measure at most $Cr^{d-1}a$. An identical argument yields the same upper bound for the variance of $\sum_{x\in\chi_R}\Phi(x+\partial\Lambda_r)$. Combining these estimates with \eqref{e:ec_additivity} completes the proof.
\end{proof}

The proof of the additivity estimate for excursion/level-set functionals is more involved. The key input that we need is control over contributions from `large' components. Since the size of a component is not a topological property, to define large components, we refine our stratification by tiling $\Lambda_R$ using cubes of side-length $a$, and consider components which cross such cubes.

Given $a,R\geq 1$ such that $2a$ divides $R$, we define $\mathcal{F}^{(a)}_R$ to be the stratification of $\Lambda_R$ formed by the union of the minimal stratifications of $x+[0,a]^d$ for all $x\in a\Z^d\cap[-R/2,R/2)^d$. In other words, we divide $\Lambda_R$ into $(R/a)^d$ cubes of side length $a$ and include all of their respective open faces of different dimensions in the stratification.

We define the set of \emph{$a$-planes} to be the collection of $(d-1)$-dimensional hyperplanes passing through some point in $a\Z^d$ which are normal to one of the coordinate axes (see Figure~\ref{fig:a-planes}). Note that all of the lower dimensional strata of $\mathcal{F}^{(a)}_R$ are contained in some $a$-plane. Given an excursion/level-set functional $\Phi$, we define $\Phi^{(>a)}$ to be the contribution to $\Phi$ from components that intersect two parallel $a$-planes. That is, $\Phi^{(>a)}$ is defined as in \eqref{e:es_functional2} except the sum is restricted to components of $\{g\geq\ell\}$ (or $\{g=\ell\}$) which are contained in $D$ and intersect at least two $a$-planes which are parallel to one another. We define $\Phi^{(<a)}=\Phi-\Phi^{(>a)}$, which gives the contribution from components which do not intersect parallel $a$-planes.

\begin{figure}[h]
    \centering
    \begin{tikzpicture}[brace/.style={decorate,decoration={brace, amplitude=5pt}}, scale=0.5]

    \draw[black!20,dashed,step=3] (-2,-2) grid (14,14);
    \draw (0,0) rectangle (12,12);
    \node[right] at (12,11) {$\Lambda_R$};
    \node[above right] at (6,6) {$0$};
    \fill (6,6) circle (3pt);
    
    \draw[brace] (-0.1,9) --(-0.1,12) node[midway, left=6pt] {$a$};
    \draw[brace] (0,12.1) --(3,12.1) node[midway, above=6pt] {$a$};

    \draw [thick, pattern = north west lines, pattern color = gray] plot [smooth cycle, tension = 0.5] coordinates {(9.5,8) (10.5,8.5) (11,8) (10,7)};
    \draw [thick, pattern = north west lines, pattern color = gray] plot [smooth cycle, tension = 0.5] coordinates {(9,6.5) (8.5,6) (9,5.5) (9.5,6)};
    \draw [thick, pattern = north west lines, pattern color = gray] plot [smooth cycle, tension = 0.5] coordinates {(7.5,4.5)(7,4)(7,3)(7,2)(8,1)(9,0.5)(10,1.25)(9,2)(8,2.5)};
    \node[right] at (12.5,5) {$\Phi^{(<a)}$};
    \draw (12.5,5) -- (10.5, 7.2);
    \draw (12.5,5) -- (9.4,5.6);
    \draw (12.5,5) -- (8.2,2.8);

    \draw [thick, pattern = north west lines, pattern color = gray] plot [smooth cycle, tension = 0.5] coordinates {(1,10) (2,10.5) (6,10.5) (6.5, 10) (6,9.5) (5.5,9.2) (5.5,8) (5, 7.5) (4.5, 8) (4.5,9.5) (1.5,9.5)};
    \draw [thick, pattern = north west lines, pattern color = gray] plot [smooth cycle, tension = 0.5] coordinates {(1,7)(1,5)(2,4)(3.5,3.5)(4.5,2.5)(5,3.5)(3,4.5)(2,5)};
    \node[left] at (-0.2,7) {$\Phi^{(>a)}$};
    \draw (-0.2,7) -- (2,9.2);
    \draw (-0.2,7) -- (0.8,6.5);
    \end{tikzpicture}

    \caption{The box $\Lambda_R$ is subdivided by $a$-planes (denoted by dashed lines). The shaded regions represent excursion set components, which are labelled according to whether they contribute to $\Phi^{(>a)}$ or $\Phi^{(<a)}$.}
    \label{fig:a-planes}
\end{figure}
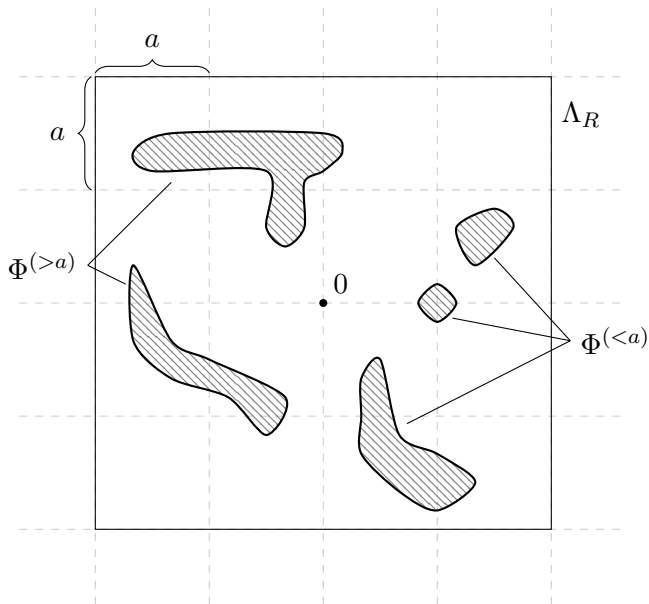

The key insight that we use to establish additivity, is that changes in $\Phi^{(>a)}$ (when perturbing the underlying field) imply the occurrence of a truncated arm event at scale $a$:

\begin{lemma}\label{l:arm_event}
    Let $a,R\geq 1$ such that $2a$ divides $R$, $x\in\Lambda_R$ and $g\in \mathrm{Morse}_x(\Lambda_R,\mathcal{F}^{(a)}_R,\ell)$. Let $\Phi$ be an excursion/level-set functional. If $d_x\Phi^{(>a)}(\Lambda_R,g)\neq 0$ then there exists a component of $\{g\geq\ell\}\backslash B(x,1)$ which intersects both $\partial B(x,1)$ and $\partial B(x,a/2)$ but does not intersect $\partial\Lambda_R$.
\end{lemma}
\begin{proof}
    Let $C$ denote the component of $\{g\geq\ell\}$ which contains $x$, then $C\setminus B(x,1)$ has finitely many components, which we denote $E_1,\dots,E_n, F_1,\dots,F_m$ where the $E_i$ all intersect $\partial\Lambda_R$ and the $F_i$ do not. (Of course, $n$ or $m$ may be zero.) Suppose that the arm event in the statement of the lemma fails, then $\cup_{i=1}^mF_i\subset B(x,a/2)$ which does not intersect parallel $a$-planes. Hence for any $\epsilon\in(0,1)$ the set
    \[
        \bigcup_{i=1}^mF_i\cup\bigcup_{j=1}^nE_j\setminus B(x,\epsilon)
    \]
    does not contribute to $\Phi^{(>a)}$ because each component either touches the boundary (i.e., intersects $\partial\Lambda_R$) or is `too small' (i.e., does not intersect parallel $a$-planes). Similarly    
    \[
        \bigcup_{i=1}^mF_i\cup\bigcup_{j=1}^nE_j\cup B(x,\epsilon)
    \]
    does not contribute to $\Phi^{(>a)}$ because it either intersects $\partial\Lambda_R$ (if $n>0$) or does not intersect parallel $a$-planes (if $n=0$). Combining these observations with the definition of topological derivative, we see that $d_x\Phi^{(>a)}(\Lambda_R,g)=0$, which completes the proof of the lemma.
\end{proof}

With this estimate, we can now prove the stated additivity result for excursion/level-set functionals:

\begin{proof}[Proof of Proposition~\ref{p:additivity} (Excursion/level-set functionals)]
    We define
    \[
        U_{R,+a}=\overline{\Lambda_R\setminus\bigcup_{x\in\chi_R}x+\Lambda_{r-2a}},
    \]
    which is simply the union of $U_R$ and all of the microscopic cubes in $\Lambda_R$ which touch $\partial U_R$. Recall that $\Phi$ is defined in \eqref{e:es_functional2} as the sum of $\varphi(E)$ where $E$ is taken to be a component of $\{f\geq\ell\}$ (or $\{f=\ell\}$) contained in $\Lambda_R$. We can partition such components into the following categories: 1) components contained in $x+\Lambda_r$ for some $x\in\chi_R$, 2) components contained in $\Lambda_R$ which intersect $U_R$ and also intersect two parallel $a$-planes, 3) components contained in $\Lambda_R$ which intersect $ U_R$ but do not intersect two parallel $a$-planes. Observe that components in the final case must be contained in $U_{R,+a}$. We express this partition as
    \begin{equation}\label{e:additivity1}
        \Phi(\Lambda_R)=\sum_{x\in\chi_R}\Phi(x+\Lambda_r) + \underbrace{\Phi^{(>a)}(\Lambda_R, U_R)}_{=:A_R} + \underbrace{\Phi^{(<a)}(U_{R,+a}, U_R)}_{=:B_R} 
    \end{equation}
    where we recall that $\Phi^{(>a)}$ and $\Phi^{(<a)}$ denote contributions from components which do or do not intersect parallel $a$-planes respectively and $\Phi(D,F)$ denotes the contribution from components contained in $D$ which intersect $F$. To prove the proposition, for excursion/level-set functionals, it is enough to bound the variance of $A_R$ and $B_R$.

    We note that the expressions defining $A_R$ and $B_R$ are Lipschitz topological functionals: to view them as topological functionals, we must define them to be identically zero for regular affine stratified sets which are not of the form $(\Lambda_R,\mathcal{F}_R^{(a)})$, while the Lipschitz property follows from the proof of Lemma~\ref{l:exc_lipschitz}. Hence by quasi-association (Corollary~\ref{c:quasi_association}) and Assumption~\aref{a:cov_decay_int}
    \begin{equation}\label{e:additivity2}
        \Var[B_R]\leq C\int_{(U_{R,+a}^{+2})^{2}}\widetilde{K}(x-y)\;dxdy\leq C\lvert (U_{R,+a})^{+2}\rvert\int_{\R^d}\widetilde{K}(x)\;dx=O(ar^{-1}R^d).
    \end{equation}
    Turning to $A_R$, by the covariance formula (Theorem~\ref{t:cov_formula})
    \begin{equation}\label{e:additivity5}
        \Var[A_R]=\int_{(\mathcal{F}^{(a)}_R)^2}K(x-y)\int_0^1\mu^t_{x,y}\big(d_x\Phi^{(>a)}(\Lambda_R, U_R,f)d_y^t\Phi^{(>a)}(\Lambda_R,U_R,f^t)\big)\;dtdxdy.
    \end{equation}
    We recall that $\mathcal{F}^{(a)}_R$ can be thought of as the stratification induced by dividing $\Lambda_R$ into $(R/a)^d$ cubes of side length $a$. In particular for each $k\in\{0,1,\dots, d\}$ this stratification contains at most $c_d(R/a)^d$ strata of dimension $k$, each of which has $k$-dimensional measure at most $c_da^k$. Using the fact that $\Phi^{(>a)}$ has the same Lipschitz norm as $\Phi$ and the pivotal measure bound from Proposition~\ref{p:intensity_integrable}, by \eqref{e:additivity5} we see that
    \begin{equation}\label{e:additivity6}
    \begin{aligned}
        \Var[A_R]=&\int_{\Lambda_{R}^2}K(x-y)\int_0^1\mu^t_{x,y}\big(d_x\Phi^{(>a)}(\Lambda_R, U_R,f)d_y^t\Phi^{(>a)}(\Lambda_R,U_R,f^t)\big)\;dtdxdy+O(R^d/a)
    \end{aligned}
    \end{equation}
    where the implicit constant may depend on $K$ and $\Phi$. By Lemma~\ref{l:arm_event} the integral term above is bounded in absolute value by a constant times
    \begin{equation}\label{e:additivity7}
    \begin{aligned}
        \int_{\Lambda_{R}^2}\lvert K(x-y)\rvert&\int_0^1\mu^t_{x,y}(\ind_{\mathrm{TArm}(x,a/2)})\;dtdxdy\leq R^d\int_{\R^d}\lvert K(x)\rvert\int_0^1\mu^t_{x,0}(\ind_{\mathrm{TArm}_x(a/2)})\;dtdx
    \end{aligned}
    \end{equation}
    where $\mathrm{TArm}_x(a/2)$ denotes the `truncated arm' event that there exists a bounded component of $\{f\geq\ell\}\setminus B(x,1)$ joining $\partial B(x,1)$ to $\partial B(x,a/2)$. As $a\to\infty$, $\ind_{\mathrm{TArm}_x(a/2)}\to 0$ $\mu^t_{x,0}$-almost surely by definition so by dominated convergence
    \[
        \mu^t_{x,0}(\ind_{\mathrm{TArm}_x(a/2)})\to 0\qquad\text{as }R\to\infty.
    \]
    Applying the dominated convergence theorem once more, along with Proposition~\ref{p:intensity_integrable}, shows that \eqref{e:additivity7} is $o(R^d)$ as $R\to\infty$. Combined with \eqref{e:additivity1} and \eqref{e:additivity2}, this completes the proof for bounded excursion/level-set functionals.
\end{proof}

\subsection{Stabilisation}\label{ss:stabilisation} We now show that our topological functionals stabilise, in the sense that the effect of a small perturbation at a critical point of the field causes the same change in the value of the functional for all sufficiently large domains. This property can be formalised for excursion/level-set functionals as follows:

\begin{lemma}\label{l:stabilisation}
    Let $g:\R^d\to\R$ be $C^2$ with a non-degenerate critical point $x$ at level $\ell$ and no other critical points at level $\ell$ and let $\Phi$ be an excursion/level-set functional. There exists $R_0\geq 1$ and $d_x\Phi_\infty(g)\in\R$ such that \[
        d_x\Phi(\Lambda_R,g)=d_x\Phi_\infty(g)
    \]
    for all $R\geq R_0$ such that $g\in\mathrm{Morse}_x(\Lambda_R,\mathcal{F}_R,\ell)$. Moreover, if $d_x\Phi(\Lambda_R,g)\neq d_x\Phi_\infty(g)$ for some $R\geq 1$ then there exists a bounded component of $\{g\geq\ell\}\backslash B(x,1)$ which intersects $\partial B(x,1)$ and $\partial\Lambda_R$.
\end{lemma}
\begin{proof}
    Let $C$ denote the component of $\{g\geq\ell\}\cup B(x,1)$ which contains $x$, then $C\setminus B(x,1)$ has finitely many components, which we denote $E_1,\dots,E_n, F_1,\dots,F_m$ where the $E_i$ are all bounded and the $F_i$ are unbounded. We choose $R_0$ such that $\cup_{i=1}^nE_i\subset\Lambda_{R_0-1}$, in particular if $n=0$ we may choose $R_0=2$. Then for any $C^2$ function $h$ supported on $B(x,1/2)$ and $R\geq R_0$ such that $g\in\mathrm{Morse}_x(\Lambda_R,\mathcal{F}_R,\ell)$,
    \[
        \Phi(\Lambda_R,g+h,\ell)=\sum_{\substack{E\in\mathrm{Comp}(\{g+h\geq\ell\},\Lambda_R)\\E\subset C}}\varphi([E])+\sum_{\substack{E\in\mathrm{Comp}(\{g+h\geq\ell\},\Lambda_R)\\ E\cap C=\emptyset}}\varphi([E]).
    \]
    The first sum here is independent of $R\geq R_0$, since any bounded component of $\{g+h\geq\ell\}$ contained in $C$ must be contained in $\Lambda_{R_0}$. The second sum is independent of $h$, since $\{g\geq\ell\}$ and $\{g+h\geq\ell\}$ coincide on the complement of $B(x,1)$. Combining these observations, we see that the change in $\Phi(\Lambda_R,g+h,\ell)$ as we vary $h$ is independent of $R\geq R_0$. By definition of the topological derivative in \eqref{e:top_derivative_functional} this proves the statement of the lemma.
\end{proof}

It follows from the Poincar\'e-Hopf theorem that the Euler characteristic stabilises immediately:

\begin{lemma}\label{l:ec_stabilisation}
    Let $\Phi$ be the Euler characteristic and $(D_1,\mathcal{F}_1)$ and $(D_2,\mathcal{F}_2)$ be regular affine stratified sets. If $x$ is contained in $d$-dimenisional strata of both $D_1$ and $D_2$ and $g\in\mathrm{Morse}_x(D_1,\mathcal{F}_1,\ell)\cap\mathrm{Morse}_x(D_2,\mathcal{F}_2,\ell)$ then
    \[
        d_x\Phi(D_1,g)=d_x\Phi(D_2,g),
    \]
    and we denote this common value by $d_x\Phi_\infty(g)$.
\end{lemma}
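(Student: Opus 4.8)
The plan is to reduce the claim to the Morse-theoretic representation of the Euler characteristic already exploited in the proof of Lemma~\ref{l:ec}. By \cite[Corollary~9.3.5]{at07}, for any regular affine stratified set $(D,\mathcal{F})$ and any $h\in\mathrm{Morse}(D,\ell)\cap\mathrm{Reg}(D,\ell)$, the Euler characteristic of $\{h\geq\ell\}\cap D$ is a signed sum over the stratified critical points of $h$ lying in $\{h\geq\ell\}$,
\begin{displaymath}
    \Phi(D,h,\ell)=\sum_{p}\iota_D(p)\,\ind_{h(p)\geq\ell},
\end{displaymath}
the sum ranging over all stratified critical points $p$ of $h$ in $D$, where $\iota_D(p)\in\{-1,+1\}$ is a local index. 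Everything hinges on the following: when $p$ lies in the top-dimensional (interior) stratum of $D$, this index is a fixed function of the Morse index $\mathrm{ind}_p(h)$ of $h$ at $p$ (explicitly $\iota_D(p)=(-1)^{d-\mathrm{ind}_p(h)}$), so it depends only on $h$ near $p$ and not on the ambient set $D$.

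First I would fix $\epsilon_0>0$ small enough that $\overline{B(x,\epsilon_0)}$ is contained in the interiors of both $D_1$ and $D_2$ and contains no stratified critical point of $g$ other than $x$; this is possible because $x$ is interior to both sets and $g$, being Morse on a compact set, has only finitely many stratified critical points. Assuming (without loss of generality) that $\rho$ is supported in the closed unit ball, the bump $\rho_{\delta,\epsilon,x}$ is supported in $\overline{B(x,\epsilon)}\subset\mathrm{int}(D_i)$ for all $\epsilon<\epsilon_0$, so it leaves the boundary strata of $D_i$ untouched. By the stratified Morse lemma already invoked (\cite[Lemma~12]{bmm24b}), for $\epsilon<\epsilon_0$ and for $\delta,\eta>0$ sufficiently small both $g\pm\rho_{\delta,\epsilon,x}$ and $g\pm\eta$ lie in $\mathrm{Morse}(D_i,\ell)\cap\mathrm{Reg}(D_i,\ell)$ and $\{g\pm\rho_{\delta,\epsilon,x}\geq\ell\}$ lies in the same stratified isotopy class in $(D_i,\mathcal{F}_i)$ as $\{g\pm\eta\geq\ell\}$. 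Feeding this into the definition \eqref{e:top_derivative_functional} of the topological derivative gives
\begin{displaymath}
    d_x\Phi(D_i,g)=\Phi(D_i,g+\eta,\ell)-\Phi(D_i,g-\eta,\ell)\qquad(i=1,2).
\end{displaymath}

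It then remains to evaluate this difference via the signed-sum formula. Adding a constant $\pm\eta$ to $g$ alters neither its stratified critical set nor the local indices $\iota_{D_i}(\cdot)$, so the right-hand side equals $\sum_{p}\iota_{D_i}(p)\big(\ind_{g(p)+\eta\geq\ell}-\ind_{g(p)-\eta\geq\ell}\big)$, summed over the stratified critical points $p$ of $g$ in $D_i$. Choosing $\eta$ smaller than $\min\{\lvert g(p)-\ell\rvert : p\text{ a stratified critical point of }g\text{ in }D_i,\ p\neq x\}$, every bracket vanishes except the one at $p=x$, where it equals $1-0=1$. Since $x$ is interior to $D_i$, the coefficient $\iota_{D_i}(x)$ is the $D_i$-independent quantity $(-1)^{d-\mathrm{ind}_x(g)}$, whence $d_x\Phi(D_1,g)=(-1)^{d-\mathrm{ind}_x(g)}=d_x\Phi(D_2,g)$, which is the claim.

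The only real obstacle is bookkeeping: justifying the signed-sum representation and, in particular, the assertion that the contribution of an interior critical point is independent of the ambient stratified set. This comes down to tracing through \cite[Corollary~9.3.5]{at07}, or equivalently to the standard handle-attachment computation via the stratified Morse lemma, together with the observation that near an interior critical point the local excursion-set picture — and hence its contribution to the Euler characteristic — is determined entirely by the Hessian of $g$ there. If one prefers to bypass the signed-sum formula, the same conclusion follows by localising the computation to a small closed ball $B=\overline{B(x,\epsilon')}$ that strictly contains the support of $\rho_{\delta,\epsilon,x}$ and lies strictly inside both $D_i$: inclusion–exclusion for the Euler characteristic (cf.\ \cite[Chapter~6]{at07}) gives
\begin{displaymath}
    \Phi(D_i,g\pm\rho_{\delta,\epsilon,x},\ell)=\chi\big(\{g\pm\rho_{\delta,\epsilon,x}\geq\ell\}\cap B\big)+\chi\big(\{g\geq\ell\}\cap(D_i\setminus\mathrm{int}\,B)\big)-\chi\big(\{g\geq\ell\}\cap\partial B\big),
\end{displaymath}
in which only the first summand depends on the sign of the perturbation; passing to the limit in \eqref{e:top_derivative_functional} then exhibits $d_x\Phi(D_i,g)$ as the manifestly $D_i$-independent quantity $\lim_{\epsilon\searrow0}\lim_{\delta\searrow0}\big(\chi(\{g+\rho_{\delta,\epsilon,x}\geq\ell\}\cap B)-\chi(\{g-\rho_{\delta,\epsilon,x}\geq\ell\}\cap B)\big)$.
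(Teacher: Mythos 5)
Your main argument is essentially the paper's own proof: both reduce the topological derivative, via the stratified Morse lemma and the signed-sum representation of the Euler characteristic in \cite[Corollary~9.3.5]{at07}, to the single contribution of the interior critical point $x$, whose sign depends only on the Morse index of $g$ at $x$ and hence not on the ambient stratified set. Your argument is correct (the localisation-to-a-ball alternative at the end is a valid bonus route, but the primary line matches the paper), so nothing further is needed.
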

\begin{proof}
    As argued in the proof of Lemma~\ref{l:ec}, the topological derivative $d_x\Phi(D_i,g)$ is equal to $+1$, $0$ or $-1$ depending on how $x$ contributes to the decomposition of the Euler characteristic in \eqref{e:ec_decomp}. The contribution associated with a critical point in a $d$-dimensional stratum is determined by its index (i.e., the number of negative eigenvalues of the Hessian at the critical point). Hence this sign is the same on both domains $D_i$ and therefore so is the topological derivative.
\end{proof}

We will make use of these stabilisation results when applying the covariance formula on a large domain, to argue that the integrand is (asymptotically) independent of the size of the domain. The precise statement we need is the following:

\begin{proposition}[Stabilisation]\label{p:stabilisation}
    Let $f$ satisfy Assumptions~\aref{a:basic} and~\aref{a:cov_decay_int} and let $\Phi$ be a bounded excursion/\allowbreak level-set functional, then as $R\to\infty$,
    \[
        \int_{\Lambda_{R}^2}K(x-y)\int_0^1\mu^t_{x,y}(d_x\Phi_Rd_y^t\Phi_R-d_x\Phi_\infty d_y^t\Phi_\infty)\;dtdxdy=o(R^d).
    \]
    If instead, $\Phi$ is the Euler characteristic, then the above expression is identically zero.
\end{proposition}
\begin{proof}
    The statement for the Euler characteristic is immediate from Lemma~\ref{l:ec_stabilisation}. Assuming that $\Phi$ is a bounded excursion/level-set functional, for each $R$, we choose $r=r_R$ such that $r_R\to\infty$ and $r_R/R\to 0$ as $R\to\infty$. Using integrability of $K$ and the bound on pivotal measures in Proposition~\ref{p:intensity_integrable}, we have
    \begin{equation}\label{e:var_limit2}
    \begin{aligned}
        \Big\lvert\int_{\substack{x,y\in\Lambda_R\\d(x,\partial\Lambda_R)<r\text{ or }d(y,\partial\Lambda_R)<r}}K(x-y)\int_0^1\mu^t_{x,y}(d_x\Phi_Rd_y^t\Phi_R-d_x\Phi_\infty d_y^t\Phi_\infty)&\;dtdxdy\Big\rvert\\
        &\leq C\|\Phi\|_\mathrm{Lip}^2 rR^{d-1}=o(R^d),
    \end{aligned}
    \end{equation}
    where $d(x,\partial\Lambda_R)$ denotes the Euclidean distance between $x$ and $\partial\Lambda_R$. By Lemma~\ref{l:stabilisation}
    \begin{equation}\label{e:var_limit3}
    \begin{aligned}
        \Big\lvert\int_{\Lambda_{R-r}^2}K(x-y)&\int_0^1\mu^t_{x,y}(d_x\Phi_Rd_y^t\Phi_R-d_x\Phi_\infty d_y^t\Phi_\infty)\;dtdxdy\Big\rvert\\
        &\leq \|\Phi\|_\mathrm{Lip}^2\int_{\Lambda_{R-r}^2}\lvert K(x-y)\rvert\int_0^1\mu^t_{x,y}(\ind_{\mathrm{TArm}_x(r)}+\ind_{\mathrm{TArm}_y^t(r)})\;dtdxdy\\
        &\leq \|\Phi\|_\mathrm{Lip}^2R^d\int_{\R^d}\lvert K(x)\rvert\int_0^1\mu^t_{x,0}(\ind_{\mathrm{TArm}_x(r)}+\ind_{\mathrm{TArm}_0^t(r)})\;dtdx
    \end{aligned}
    \end{equation}
    where we recall that $\mathrm{TArm}_x(r)$ denotes the `truncated arm' event that there exists a bounded component of $\{f\geq\ell\}\setminus B(x,1)$ joining $\partial B(x,1)$ to $\partial B(x,r)$ and $\mathrm{TArm}^t_y(r)$ denotes the corresponding event for $f^t$ at $y$. As $r\to\infty$, $\ind_{\mathrm{TArm}_x(r)}\to 0$ $\mu^t_{x,0}$-almost surely by definition (as does $\ind_{\mathrm{TArm}_0^t(r)}$) so by dominated convergence
    \[
        \mu^t_{x,0}(\ind_{\mathrm{TArm}_x(r)}+\ind_{\mathrm{TArm}_0^t(r)})\to 0\qquad\text{as }R\to\infty.
    \]
    Applying the dominated convergence theorem once more, along with Proposition~\ref{p:intensity_integrable}, shows that \eqref{e:var_limit3} is $o(R^d)$ as $R\to\infty$.
\end{proof}

\subsection{Quantitative bounds}
In this subsection, we refine the additivity and stabilisation results for bounded excursion/level-set functionals under additional arm-decay assumptions. These refined estimates are only required for the higher moment bounds in Theorem~\ref{t:higher_moments} and the quantitative central limit theorem (Theorem~\ref{t:qclt}). Throughout this subsection, $\Phi$ denotes a bounded excursion/level-set functional evaluated at a level $\ell$. The estimates we prove are the following:

\begin{proposition}[Quantitative additivity]\label{p:q_additivity}
    Let $f$ satisfy Assumptions~\aref{a:basic} and \aref{a:cov_decay_pol} and suppose that $(a_R)_{R\geq 1}$ and $(r_R)_{R\geq 1}$ satisfy \eqref{e:scales}. Suppose also that one of the following holds:
    \begin{enumerate}
        \item Assumption~\aref{a:arm_decay_critical} holds and $a_R$ grows at least polynomially in $R$ (i.e., $a_R/R^\delta\to\infty$ for some $\delta>0$),
        \item Assumption~\aref{a:arm_decay} holds, or
        \item Assumptions~\aref{a:arm_decay} and \aref{a:cov_decay_exp} hold,
    \end{enumerate}
    then there exists $\epsilon>0$ such that as $R\to\infty$
    \[
        R^{-d}\Var\Big[\Phi(\Lambda_R)-\sum_{x\in\chi_R}\Phi(x+\Lambda_r)\Big]=\begin{cases}
            O(R^{-\epsilon}) &\text{in case (1)}\\
            O\Big(ar^{-1}+a^{-\frac{\beta}{2d+2}}\Big) &\text{in case (2),}\\
            O\Big(ar^{-1}+\exp(-\epsilon a)\Big) &\text{in case (3).}
        \end{cases}
    \]
\end{proposition}

\begin{proposition}[Quantitative stabilisation]\label{p:q_stabilisation}
    Let $f$ satisfy Assumptions~\aref{a:basic} and~\aref{a:cov_decay_pol}. If Assumption~\aref{a:arm_decay} holds, then as $R\to\infty$
    \[
    R^{-d}\int_{\Lambda_{R}^2}K(x-y)\int_0^1\mu^t_{x,y}(d_x\Phi_Rd_y^t\Phi_R-d_x\Phi_\infty d_y^t\Phi_\infty)\;dtdxdy=O\Big(R^{-\min\{1,\beta/(2d+2)\}}\Big).
    \]
    If Assumption~\aref{a:arm_decay} is replaced by Assumption~\aref{a:arm_decay_critical} then the left-hand side is $O(R^{-\epsilon})$ for some $\epsilon>0$.
\end{proposition}

To prove these results, we need to strengthen the control over the measure of truncated arm events (under $\mu^t_{x,y}$) used in Sections~\ref{ss:additivity} and~\ref{ss:stabilisation}. We now state the key estimate with which we do this.

Given $k\in\N$, $t\in[0,1)$ and $x\in\R^d$ we define
\begin{equation*}
    I_k(t,x):=\begin{cases}
        \overline{c} &\text{if }\lvert x\rvert\in[1,\infty),\\
        \overline{c}\lvert x\rvert^{2k}t &\text{if }\lvert x\rvert\in[\sqrt{t},1),\\
        \overline{c}t^{k+1} &\text{if }\lvert x\rvert\in[0,\sqrt{t}),
    \end{cases}
\end{equation*}
where $\overline{c}>0$ is a constant depending only on the distribution of $f$, which will be specified later (in Lemma~\ref{l:dc_bound}).

\begin{proposition}[Conditional truncated arm decay]\label{p:arm_conditional}
    Let $f$ satisfy Assumptions~\aref{a:basic} and \aref{a:cov_decay_pol}. Suppose that either
    \begin{enumerate}
        \item Assumption~\aref{a:arm_decay_critical} holds,
        \item Assumption~\aref{a:arm_decay} holds, or
        \item Assumptions~\aref{a:arm_decay} and \aref{a:cov_decay_exp} hold.
    \end{enumerate}
    then there exists $C,c,\epsilon>0$ such that for any regular affine stratified sets $(D_1,\mathcal{F}_1)$, $(D_2,\mathcal{F}_2)$, $x_1\in D_1$, $x_2\in D_2$, $t\in[0,1)$ and $R\geq 1$,
    \begin{equation}\label{e:arm_conditional}
        \mu^t_{x,y}(\ind_{\mathrm{TArm}_x(R)})\leq \hat{I}_{d_{xy},R}(1-t,x-y)
    \end{equation}
    where
    \[
         \hat{I}_{k,R}(s,z):=C I_{k}(s,z)^{-1/2}\Big(h_1(R)+\exp\big(-cI_{k}(s,z)^2/h_2(R)^2\big)\Big),
    \]
    $d_{xy}$ is the minimum of the dimensions of the strata containing $x$ and $y$, and
    \[
        (h_1(R),h_2(R))=\begin{cases}
            (R^{-\epsilon},R^{-\epsilon}) &\text{in case (1),}\\
            (e^{-cR},R^{-\beta}) &\text{in case (2),}\\
            (e^{-cR}, e^{-cR}) &\text{in case (3).}
        \end{cases}
    \]
\end{proposition}

The first term in the definition of $\hat{I}_{k,R}$ corresponds to the Gaussian density in the definition of $\mu^t_{x,y}$ (which diverges as $x\to y$ and $t\to 1$) while the following terms inside parentheses are related to arm decay when conditioning on critical points at $x$ and $y$.

With this estimate in hand, the quantitative additivity and stabilisation results reduce to evaluating some elementary (but slightly messy) integrals:

\begin{proof}[Proof of Propositions~\ref{p:q_additivity} and~\ref{p:q_stabilisation} (assuming Proposition~\ref{p:arm_conditional})]
    We first claim that there exists $C>0$ such that for all $R\geq 20$, $u\geq 1$
    \begin{equation}\label{e:q_additivity1}
        \sup_{x\in\Lambda_R}\int_{\mathcal{F}_R^{(a)}}\lvert K(x-y)\rvert\int_0^1\mu^t_{x,y}(\ind_{\mathrm{TArm}_x(u)})\;dtdy\leq C\Big(h_1(u)+(h_2(u))^{\frac{1}{2d+2}}\Big).
    \end{equation}
    To verify the claim, we invoke Proposition~\ref{p:arm_conditional} and control the integral of the right-hand side of \eqref{e:arm_conditional} as follows.

    We fix $x\in\Lambda_R$ and first note that, since $I_k(t,z)$ is constant for $\lvert z\rvert\geq 1$, by \eqref{e:arm_conditional} and integrability of $\widetilde{K}$
    \[
        \int_{\mathcal{F}_R^{(a)}\setminus B(x,1)}\lvert K(x-y)\rvert\int_0^1\mu^t_{x,y}(\ind_{\mathrm{TArm}_x(u)})\;dtdy\leq C h_1(u)\int_{\mathcal{F}_R^{(a)}}\lvert K(x-y)\rvert\;dy\leq C h_1(u),
    \]
    where, as before, the value of $C>0$ may change from line to line. Given a stratum $F\in\mathcal{F}_R^{(a)}$, we let $x_F$ denote the projection of $x$ onto the minimal affine subspace containing $F$. Since $I_k(s,z)$ is non-decreasing in $\lvert z\rvert$, for any $s,y,u$ we have $\hat{I}_{k,u}(s,x-y)\leq\hat{I}_{k,u}(s,x_F-y)$. Then by the trivial bound $\lvert K\rvert\leq 1$ and the substitution $y\mapsto y+x_F$,
    \begin{align*}
        \int_{\mathcal{F}_R^{(a)}\cap B(x,1)}\lvert K(x-y)\rvert\int_0^1\mu^t(\ind_{\mathrm{TArm}_x(u)})\;dtdy&\leq\sum_{F\in\mathcal{F}_R^{(a)}}\int_{F\cap B(x_F,1)}\int_0^1\hat{I}_{d_F,u}(1-t,x_F-y)\;dtdv_F(y)\\
        &\leq \sum_{F\in\mathcal{F}^\prime}\int_{F\cap B(0,1)}\int_0^1\hat{I}_{d_F,u}(s,y)\;dsdv_F(y)
    \end{align*}
    where $\mathcal{F}^\prime$ denotes the set of regular affine subspaces of $\R^d$ passing through the origin. Since the number of such subspaces depends only on $d$, we need only verify our desired upper bound for each $F$ individually.

    From the definition of $I_{k}(t,x)$
    \[
        \int_{F\cap B(0,1)}\int_0^1I_{d_F}(s,y)^{-1/2}\;dsdv_F(y)\leq C\int_{F\cap B(0,1)}\int_0^{\lvert y\rvert^2}\lvert y\rvert^{-d_F}s^{-1/2}\;ds+\int_{\lvert y\rvert^2}^1s^{-\frac{d_F+1}{2}}\;ds\;dv_{F}(y).
    \]
    By evaluating the inner integrals explicitly, we see that this expression is finite (and independent of $R$ and $u$).

    Denoting $\tilde{I}_{k,u}(s,y)=I_k(s,y)^{-1/2}\exp(-cI_k(s,y)^{2}/h_2(u)^2)$ and using the substitution $s\mapsto h_2(u)^{-\frac{1}{d_F+1}}s$,
    \begin{align*}
        \int_{F\cap B(0,1)}\int_{\lvert y\rvert^2}^1\tilde{I}_{d_F,u}(s,y)\;dsdy&\leq C\int_0^1\int_{F\cap B(0,\sqrt{s})}\;dy s^{-(d_F+1)/2}\exp(-c^\prime h_2(u)^{-2}s^{2d_F+2})\;ds\\
        &\leq C h_2(u)^{1/(2d_F+2)}\int_0^{\infty}s^{-1/2}\exp(-c^\prime s^{2d_F+2})\;ds\leq C h_2(u)^{\frac{1}{2d_F+2}}.
    \end{align*}
    By integrating over $F$ in ($d_F$-dimensional) spherical coordinates, we have
    \[
        \int_{F\cap B\big(0,h_2(u)^{\frac{1}{2d_F+2}}\big)}\int_0^{\lvert y\rvert^2}\tilde{I}_{d_F,u}(s,y)\;dsdy\leq C\int_{F\cap B\big(0,h_2(u)^{\frac{1}{2d_F+2}}\big)}\lvert y\rvert^{-d_F}\int_0^{\lvert y\rvert^2} s^{-1/2}\;dsdy\leq C h_2(u)^{\frac{1}{2d_F+2}}.
    \]
    Finally, letting $A(r_1,r_2):=B(0,r_2)\setminus B(0,r_1)$ and using the substitution $s\mapsto h_2(u)^{-1}\lvert y\rvert^{2d_F}s$,
    \begin{align*}
        \int_{F\cap A\big(h_2(u)^{\frac{1}{2d_F+2}},1\big)}&\int_0^{\lvert y\rvert^2}\tilde{I}_{d_F,u}(s,y)\;dsdy\\
        &=C\int_{F\cap A\big(h_2(u)^{\frac{1}{2d+2}},1\big)}\lvert y\rvert^{-d_F}\int_0^{\lvert y\rvert^2} s^{-1/2}\exp\big(-c^\prime h_2(u)^{-2}\lvert y\rvert^{4d_F}s^2\big)\;dsdy\\
        &\leq C h_2(u)^{\frac{1}{2}} \int_{F\cap A\big(h_2(u)^{\frac{1}{2d+2}},1\big)}\lvert y\rvert^{-2d_F}\;dy\int_0^\infty s^{-1/2}\exp(-c^\prime s^2)\;ds\leq Ch_2(u)^{\frac{1}{2d_F+2}}
    \end{align*}
    where the final bound follows from evaluating the integral over $y$ in spherical coordinates. Combining the last six equations verifies \eqref{e:q_additivity1}.

    We next observe that for a given $t\in[0,1)$, $(f,f^t)$ has the same distribution as $(f^t,f)$. Then by stationarity, we conclude that for any $w,z\in\R^d$ (viewed as elements of $d$-dimensional strata)
    \begin{equation}\label{e:q_additivity2}
        \mu^t_{w,z}(\ind_{\mathrm{TArm}^t_z(R)})=\mu^t_{z,w}(\ind_{\mathrm{TArm}_z(R)})=\mu^t_{0,w-z}(\ind_{\mathrm{TArm}_0(R)}).
    \end{equation}
    We may now prove the additivity and stabilisation estimates:
    
    \underline{Additivity:} From the proof of Proposition~\ref{p:additivity} (specifically \eqref{e:additivity1} and \eqref{e:additivity2}) we have
    \[
        \Var\Big[\Phi(\Lambda_R)-\sum_{x\in\chi_R}\Phi(x+\Lambda_r)\Big]\leq C\Big(\frac{a}{r}R^d+\Var\big[\Phi^{(>a)}(\Lambda_R, U_R)\big]\Big)
    \]
    By the covariance formula (Theorem~\ref{t:cov_formula}), Lemma~\ref{l:arm_event} and \eqref{e:q_additivity1}
    \begin{equation}\label{e:q_additivity}
    \begin{aligned}
        &\Var\big[\Phi^{(>a)}(\Lambda_R, U_R)\big]\\
        &\qquad=\int_{\big(\mathcal{F}_R^{(a)}\big)^2}K(x-y)\int_0^1\mu^t_{x,y}\big(d_x\Phi^{(>a)}(\Lambda_R,U_R)d_y^t\Phi^{(>a)}(\Lambda_R,U_R)\big)\;dtdxdy\\
        &\qquad\leq C\int_{\big(\mathcal{F}_R^{(a)}\big)^2}\lvert K(x-y)\rvert\int_0^1\mu^t_{x,y}(\ind_{\mathrm{TArm}_x(a/2)})\;dtdxdy\\
        &\qquad\leq C\sum_{F\in\mathcal{F}_R^{(a)}}v_{F}(F)\Big(h_1(a/2)+h_2(a/2)^{\frac{1}{2d+2}}\Big)\leq CR^d\Big(h_1(a/2)+h_2(a/2)^{\frac{1}{2d+2}}\Big)
    \end{aligned}
    \end{equation}
    where $h_1$ and $h_2$ are as specified in the conclusion of Proposition~\ref{p:arm_conditional}. This completes the proof of Proposition~\ref{p:q_additivity}.

    \underline{Stabilisation:} By Lemma~\ref{l:stabilisation} and \eqref{e:q_additivity2}
    \begin{align*}
        &\int_{\Lambda_{R}^2}K(x-y)\int_0^1\mu^t_{x,y}(d_x\Phi_Rd_y^t\Phi_R-d_x\Phi_\infty d_y^t\Phi_\infty)\;dtdxdy\\
        &\quad\leq\; C\int_{\Lambda_{R-1}^2}\lvert K(x-y)\rvert\int_0^1\mu^t_{x,y}(\ind_{\mathrm{TArm}_x(R-\|x\|_\infty)})\;dtdxdy+C\int_{\Lambda_{R}\setminus\Lambda_{R-1}\times\R^d}\lvert K(x-y)\rvert\int_0^1\|\mu^t_{x,y}\|\;dtdxdy,
    \end{align*}
    where $\|\cdot\|_\infty$ denotes the sup-norm on $\R^d$. The final expression here is $O(R^{d-1})$ as $R\to\infty$, by integrability of $K$ and the bounds on the pivotal measure (Proposition~\ref{p:intensity_integrable}). By \eqref{e:q_additivity1}, setting $\alpha=\beta/(2d+2)$ if Assumption~\aref{a:arm_decay} holds and $\alpha=\epsilon$ (as in the statement of Proposition~\ref{p:arm_conditional}) if Assumption~\aref{a:arm_decay_critical} holds, the penultimate expression is at most
    \begin{align*}
        C\int_{\Lambda_{R-1}}(R-\|x\|_\infty)^{-\alpha}\;dx&\leq C\int_0^{R-1}\int_{[0,x_1]^{d-1}}(R-x_1)^{-\alpha}\;dx_1\dots dx_d=C\int_0^{R-1}(R-x)^{-\alpha}x^{d-1}\;dx\\
        &\leq C\Big(R^{-\alpha}\int_0^{R/2}x^{d-1}\;dx+R^{d-1}\int_{R/2}^{R-1}(R-x)^{-\alpha}\;dx\Big)\leq CR^{d-\min\{1,\alpha\}}.
    \end{align*}
    Combining these two equations completes the proof of Proposition~\ref{p:q_stabilisation}.
\end{proof}

\begin{remark}\label{r:trunc_arm}
    In the above proof, we used the truncated arm decay bounds (for $\mu^t_{x,y}$) from Proposition~\ref{p:arm_conditional} to establish the additivity and stabilisation estimates needed for our quantitative limit theorems. In the remainder of this subsection, we use Assumption~\aref{a:arm_decay} to deduce the estimates in Proposition~\ref{p:arm_conditional}. As mentioned in the introduction, Assumption~\aref{a:arm_decay} does not hold at all non-critical levels for $d\geq 3$, rather it holds for $\lvert\ell\rvert>\ell_c$ which comprises the subcritical and strongly supercritical regimes. To extend our results to all non-critical levels, one could instead try to directly prove the bounds in Proposition~\ref{p:arm_conditional} by studying the conditioned field $(f|A^t_{x,y})$ (i.e., the field under the measure $\mu^t_{x,y}$). This might naturally be expected to exhibit exponential truncated arm decay at all non-critical levels, although no such result has currently been proven (even for the unconditioned field $f$).
\end{remark}

We now turn to the proof of Proposition~\ref{p:arm_conditional}. We need to bound the measure of arm events under $\mu^t_{x,y}$, which essentially reduces to controlling arm events for the field $f$ conditioned on
\[
    A^t_{x,y}=\big\{f(x)=f^t(y)=\ell,\nabla|_{F_x} f(x)=0, \nabla|_{F_y} f^t(y)=0\big\}.
\]
Our strategy is to couple the conditioned and unconditioned fields, which intuitively should be similar far away from $x$ and $y$, and show that an arm event for the conditioned field implies a slightly different arm event for the unconditioned field at a nearby level, with high probability. The precise nature of the arm event will depend on which assumption we invoke; the two cases of Assumption~\aref{a:arm_decay} correspond to the subcritical and (strongly) supercritical regimes, whilst Assumption~\aref{a:arm_decay_critical} is of most relevance in the critical regime (for planar fields).

Our next two lemmas help to establish arm events for the conditioned field in the supercritical and critical regimes respectively. For two sets $A,B\subset\R^d$, we say that $A$ surrounds $B$ if $B$ is contained in a bounded component of $\R^d\setminus A$. We recall that for a function $g$ and level $\ell$, $\{g\geq\ell\}_{<\infty}$ denotes the union of all bounded components of $\{g\geq\ell\}$. Extending our previous notation, let $\mathrm{Morse}_x(\R^d,\ell)$ denote the set of $C^2$-smooth functions $g:\R^d\to\R$ with a non-degenerate critical point $x$ at level $\ell$ and no other critical points at level $\ell$.

\begin{lemma}\label{l:arm_event_supercrit}
    Let $\ell\in\R$, $x,y\in\R^d$, $g\in\mathrm{Morse}_x(\R^d,\ell)$ and $R\geq 10$. If $B(x,1)\overset{\{g\geq\ell\}_{<\infty}}{\longleftrightarrow}\partial B(x,R)$, then there exists a connected component of $\{g\leq\ell\}\setminus(B(x,R/20)\cup B(y,R/20))$ which has diameter at least $R/20$ and intersects or surrounds either $\partial B(x,R/20)$ or $\partial B(y,R/20)$.
\end{lemma}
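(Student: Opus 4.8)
The plan is to exploit the fact that a \emph{bounded} excursion cluster must be enveloped by the level set: from such a cluster I will manufacture its ``outer boundary'', a large connected piece of $\{g=\ell\}\subseteq\{g\le\ell\}$, and then show that deleting the two balls cannot destroy a large component of it that touches, or encircles, one of the spheres. Concretely, I would first extract from the hypothesis a bounded connected component $C$ of $\{g\ge\ell\}$ with $C\cap B(y,1)\ne\emptyset$ and $C\cap\partial B(y,R)\ne\emptyset$; being closed and bounded, $C$ is compact, and $\diam(C)\ge R-1\ge 9R/10$. Put $U$ for the unbounded component of $\R^d\setminus C$, $K=\R^d\setminus U$, and $\Gamma=\partial U=\partial K$. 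Then I would record the standard facts: $K$ is compact and connected ($K$ is $C$ together with the bounded complementary components, each abutting $C$); $\Gamma\subseteq\partial C\subseteq\{g=\ell\}$, hence $\Gamma\subseteq\{g\le\ell\}$; $\Gamma$ is connected (a Mayer--Vietoris / Alexander-duality computation for $K$ and $\overline U$ inside the one-point compactification $S^d$, which requires only that $K$ be tame --- this holds since $g$ is Morse, the only possible non-smooth point of $\Gamma$ being the critical point $y$, which in any case lies inside the ball $B(y,R/20)$ that we delete); $\Gamma$ separates $\R^d$ into exactly $U$ and a bounded ``interior'' $I$ with $C\subseteq K=I\cup\Gamma$; $\Gamma$ is path-connected; and $\diam(\Gamma)=\diam(K)\ge\diam(C)\ge 9R/10$ (the extremal pair of a compact set lies on its topological boundary). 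Write $B_y=B(y,R/20)$ and $B_z=B(z,R/20)$.

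If $\Gamma$ is disjoint from $\overline{B_y}\cup\overline{B_z}$, then $\Gamma$ lies in a single component $W$ of $\{g\le\ell\}\setminus(B_y\cup B_z)$, so $\diam(W)\ge 9R/10$. Moreover $\overline{B_y}$ contains a point of $C\cap B(y,1)$, hence meets $K$; being connected and disjoint from $\Gamma$, it lies in a single component of $\R^d\setminus\Gamma$, which must be $I$. If $W\cap\partial B_y\ne\emptyset$ we are done (it intersects $\partial B_y$). Otherwise $W$ is disjoint from $\overline{B_y}$, so the component of $\R^d\setminus W$ containing $\overline{B_y}$ is contained in the component of $\R^d\setminus\Gamma$ containing $\overline{B_y}$, i.e.\ in the bounded set $I$; hence $W$ surrounds $\partial B_y$. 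Either way $W$ satisfies the conclusion.

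If instead $\Gamma$ meets $\overline{B_y}\cup\overline{B_z}$, then --- since $\diam(\Gamma)$ dwarfs the diameter of that union, whether or not the two balls overlap --- $\Gamma$ also has points outside it, and connectedness forces $\Gamma\cap(\partial B_y\cup\partial B_z)\ne\emptyset$. Fixing $p,q\in\Gamma$ realising $\diam(\Gamma)$ and a path $\gamma\colon[0,1]\to\Gamma$ from $p$ to $q$, I would decompose $[0,1]$ into the maximal sub-intervals on which $\gamma$ stays in $\{g\le\ell\}\setminus(B_y\cup B_z)$ together with the ``excursions'' into $B_y\cup B_z$, and let $L$ be the largest diameter among the corresponding sub-arcs. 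A short triangle-inequality estimate with the (at most three) balls $B(p,R/20)$, $B_y$, $B_z$ shows $L\ge R/20$: otherwise $\gamma([0,1])$, being connected and covered by these three small balls, would have diameter below $9R/10$. The longest sub-arc lies in one component $W_0$ of $\Gamma\setminus(B_y\cup B_z)$, and its endpoints lie on $\partial B_y\cup\partial B_z$ (or equal $p$ or $q$, in which case that same sub-arc still abuts a deleted ball since $\Gamma\ne W_0$). Hence the component $W$ of $\{g\le\ell\}\setminus(B_y\cup B_z)$ containing $W_0$ has $\diam(W)\ge R/20$ and meets $\partial B_y$ or $\partial B_z$.

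The main obstacle I anticipate is the topological bookkeeping rather than any single hard step: proving cleanly that the outer boundary $\Gamma$ is connected and genuinely separates $\R^d$ (this is where the regularity coming from $g\in\mathrm{Morse}_y(\R^d,\ell)$ enters), and then, in the second case, isolating a single large component of $\Gamma$ minus two balls that abuts one of the spheres --- the factor $R/20$ leaves enough room to absorb the ball radii, but the case split (overlapping versus disjoint balls, and the possibility that an endpoint of the extremal path lands inside a deleted ball) must be handled with some care.
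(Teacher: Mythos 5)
Your proposal is correct in substance, but it takes a genuinely different route from the paper. The paper never passes to the outer topological boundary: it takes the bounded excursion component $E$ meeting $B(y,1)$ and $\partial B(y,R)$, argues that $E$ is surrounded by a single component $F$ of the \emph{open} set $\{g<\ell\}$ of comparably large diameter, and then finds a large connected piece of $F$ avoiding both balls by a radial pigeonhole trick: fixing $u\in F$ with $F$ reaching $\partial B(u,R/2-1)$, each ball $B(y,R/20)$, $B(z,R/20)$ sits inside an annulus centred at $u$ of width $O(R/10)$, so the long radial range of $F$ contains a stretch of length $\geq R/20$ missing both annuli; the conclusion then follows from the dichotomy ``$F$ meets or surrounds $B(y,1)$''. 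You instead work with $\Gamma=\partial U$, the outer boundary of the filled hull of $C$, which lives in $\{g=\ell\}\subseteq\{g\leq\ell\}$, prove it is a continuum of diameter $\geq R-1$ via a Mayer--Vietoris/duality argument in $S^d$, and extract the large ball-avoiding piece by decomposing a diameter-realising path in $\Gamma$ and a covering estimate. What each buys: the paper's $F$ is open, hence trivially path-connected, and the annulus trick avoids any path decomposition; but the existence of a \emph{single} surrounding component $F$ secretly uses the same connectedness-of-the-outer-boundary fact that you prove explicitly, so your write-up makes visible a point the paper leaves to a figure. Your case split (``$\Gamma$ misses both closed balls'' versus ``$\Gamma$ meets them'') plays the role of the paper's final ``intersect or surround'' dichotomy, and your appeal to the fact that a component of a proper closed subset of a continuum must meet its boundary (boundary bumping) is the analogue of the first-entry argument one needs to upgrade ``$F$ meets a sphere'' to ``the chosen component meets a sphere''.

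Two fixable caveats. First, in your case 1 the step ``$\overline{B_y}$ contains a point of $C\cap B(y,1)$'' requires $R\geq 20$, since for $R\in[10,20)$ the ball $B(y,R/20)$ is strictly smaller than $B(y,1)$ and the hypothesis only places a point of $C$ in the larger ball; for those $R$ you would need a separate (easy but not written) argument that $\overline{B_y}$ lies in a bounded component of $\R^d\setminus\Gamma$, or else relax the constants. (The paper's own final dichotomy has the same looseness in this range, and the lemma is only applied for large $R$, so this is cosmetic rather than structural.) Second, your covering claim ``otherwise $\gamma([0,1])$ is covered by the three balls $B(p,R/20)$, $B_y$, $B_z$'' is not literally right: the small ball-avoiding sub-arcs hang off the spheres by up to their own diameter, so the honest statement is a covering by slightly enlarged balls around $y$ and $z$ (radius $R/10$) together with balls around $p$ and $q$, after which the chain/triangle-inequality bound still gives a total diameter well below $9R/10$ and hence the desired sub-arc of diameter $\geq R/20$. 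With these details supplied, the argument closes.
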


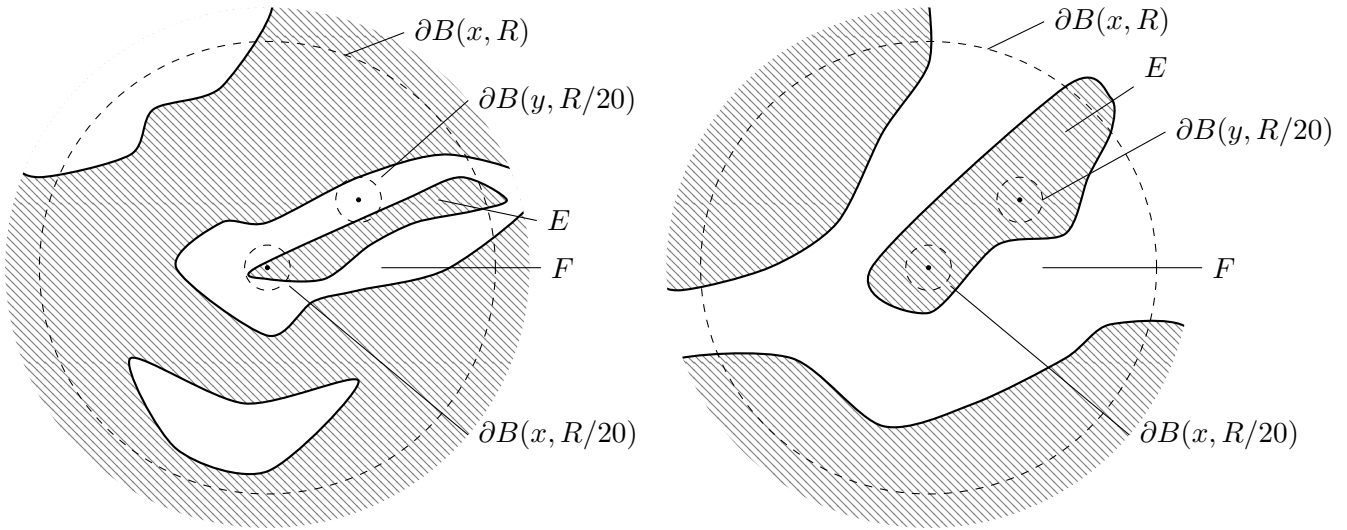
\begin{figure}[h]
    \centering
    \begin{tikzpicture}[scale=0.15]


    \begin{scope}
    \fill [pattern = north west lines, pattern color = gray] (0,0) circle (23);
    \clip (0,0) circle (23);
    \draw [thick, fill = white] plot [smooth cycle, tension = 0.5] coordinates {(8,8) (0,4) (-4,4)(-8,0) (0,-6) (4,-3) (8,-2) (16,0) (24,7) (16,10)};
    \draw [thick, fill = white] plot [smooth cycle, tension = 0.5] coordinates {(0,24) (-4,16) (-10,14) (-12, 10)(-20,8) (-22,10) (-18,18)};
    \draw [thick, fill = white] plot [smooth cycle, tension = 0.5] coordinates {(-12,-8) (-8,-16)(0,-18)(8,-10)(-2,-12)};
    \draw [thick, pattern = north west lines, pattern color = gray] plot [smooth cycle, tension = 0.5] coordinates {(-1,0) (0,-1) (5,-1) (9,2) (13,4) (18, 5) (21,6) (17,8) (12,6)};
    \end{scope}

    \draw[dashed] (0,0) circle (20);
    \node[right] at (60:24) {$\partial B(x,R)$};
    \draw (60:24)--(70:20);
    \draw[dashed] (0,0) circle (2);
    \node[right] at (-40:23) {$\partial B(x,R/20)$};
    \draw (-40:23) -- (-40:2.5);
    \draw[fill] (0,0) circle (5pt);
    
    \draw[dashed] (8,6) circle (2);
    \node[right] at (40:23) {$\partial B(y,R/20)$};
    \draw (40:23) -- (10,8);
    \draw[fill] (8,6) circle (5pt);

    \node[right] at (10:24) {$E$};
    \draw (10:24)--(15,6);
    \node[right] at (0:24) {$F$};
    \draw (0:24)--(10,0);

    \begin{scope}[shift = {(58,0)}]
    
    \begin{scope}
    \clip (0,0) circle (23);
    \draw [thick, pattern = north west lines, pattern color = gray] plot [smooth cycle, tension = 0.5] coordinates {(-5,0)(0,-4)(6,2) (12,3) (14,8) (16,12) (16,15) (12,16)};
    \draw [thick, pattern = north west lines, pattern color = gray] plot [smooth cycle, tension = 0.5] coordinates {(0,24) (0,18) (-4,12)(-8,4) (-14,0) (-22,-2)(-24,0) (160:24) (120:24) (90:24)};
    \draw [thick, pattern = north west lines, pattern color = gray] plot [smooth cycle, tension = 0.5] coordinates {(200:24) (-12,-8)(-4,-14) (4,-12) (12,-8) (16,-5) (23,-6) (-50:24)(-80:24)(-120:24)(-150:24)};
    \end{scope}

    \draw[dashed] (0,0) circle (20);
    \node[right] at (65:24) {$\partial B(x,R)$};
    \draw (65:24)--(75:20);
    \draw[dashed] (0,0) circle (2);
    \node[right] at (-40:23) {$\partial B(x,R/20)$};
    \draw (-40:23) -- (-40:2.5);
    \draw[fill] (0,0) circle (5pt);
    
    \draw[dashed] (8,6) circle (2);
    \node[right] at (30:24) {$\partial B(y,R/20)$};
    \draw (30:24) -- (10,6);
    \draw[fill] (8,6) circle (5pt);

    \node[above right] at (18,16) {$E$};
    \draw (18,16)--(12,12);
    \node[right] at (0:24) {$F$};
    \draw (0:24)--(10,0);
    \end{scope}

\end{tikzpicture}
    \caption{Two possible configurations of arm events illustrating Lemma~\ref{l:arm_event_supercrit}. The shaded region represents the excursion set $\{g\geq\ell\}$.}
    \label{fig:arm_event}
\end{figure}

\begin{proof}
    Let $E$ denote a bounded component of $\{g\geq\ell\}$ which intersects $B(x,1)$ and has diameter at least $R/2-1$. Since $E$ is bounded, it must be surrounded by a component $F$ of $\{g<\ell\}$ (see Figure~\ref{fig:arm_event}). Hence $F$ must either intersect or surround $B(x,1)$ and must have diameter at least $R/2-1$. We may therefore choose $u\in F$ such that $\partial B(u,R/2-1)$ intersects $F$. Then for some $a_1,a_2>0$ we have
    \[
        B(x,R/20)\subset A(u,a_1,a_1+R/10)\qquad\text{and}\qquad B(y,R/20)\subset A(u,a_2,a_2+R/10)
    \]
    where $A(u,r_1,r_2)$ denotes the closed annulus centred at $u$ with inner radius $r_1$ and outer radius $r_2$. By considering the different possible relative magnitudes of $a_1$ and $a_2$, it follows easily that $F$ must contains a path of diameter at least $R/20$ which does not intersect these two annuli. Let $F^\prime$ denote the component of $F\setminus(B(x,R/20)\cup B(y,R/20))$ which contains this path. If $F$ does not intersect $B(x,R/20)\cup B(y,R/20)$, then $F^\prime=F$ which must therefore surround $B(x,R/20)$. Otherwise $F$ must intersect $\partial B(x,R/20)\cup\partial B(y,R/20)$. In either case $F$ satisfies the requirements of the statement.
\end{proof}

\begin{lemma}\label{l:arm_critical}
    Let $f$ satisfy Assumptions~\aref{a:basic}, \aref{a:cov_decay_pol} and \aref{a:arm_decay_critical} for some $\epsilon>0$ at level $\ell$, then there exists $C>0$ such that for any $\delta>0$ and $R>r\geq 1$
    \[
        \P\Big(B(0,r)\overset{\{f\geq\ell-\delta\}}{\longleftrightarrow}\partial B(0,R)\Big)\leq C\big((r/R)^{\epsilon}+\delta R^{d/2}\big)
    \]
\end{lemma}
\begin{proof}
    This result follows from using the Cameron-Martin theorem to bound the change in probability of arm events for $f$ and $f+\delta$. The argument was given in \cite[Section~3.3]{mv20} for planar fields, so we will only outline the steps, highlighting the changes for $d\geq 3$.

    Let $A(f):=\{B(0,r)\overset{\{f\geq\ell\}}{\longleftrightarrow}\partial B(0,R)\}$ and $h$ denote a measurable function such that $h|_{B(0,R)}\geq 1$. By monotonicity of $A$
    \[
        \P(A(f+\delta))\leq \P(A(f+\delta h))\leq \P(A(f))+d_\mathrm{TV}(f,f+\delta h)
    \]
    where $d_\mathrm{TV}$ denotes the total variation distance. Letting $H$ denote the reproducing kernel Hilbert space of $f$, if $h\in H$ then by \cite[Proposition~3.8]{bmm22} (or \cite[Corollary~3.10]{mv20} in the planar case)
    \[
        d_{\mathrm{TV}}(f,f+\delta h)\leq \delta\|h\|_H.
    \]
    Letting $\rho:\R^d\to\R$ denote the spectral density of $f$, one can show that
    \[
        \|h\|_H^2\leq \frac{\sup_{x\in\Omega}\lvert\mathcal{F}^{-1}[h](x)\rvert^2\lvert\Omega\rvert}{\inf_{x\in\Omega}\rho^2(x)}
    \]
    where $\mathcal{F}[\cdot]$ denotes the standard Fourier transform and $\Omega$ is the support of $\mathcal{F}^{-1}[h]$. We now set
    \[
        h(t)=C_1\Big(\frac{R}{2c_0}\Big)^d\mathcal{F}[\ind_{\Lambda_{c_0/R}}](t)=C_1\prod_{i=1}^d\frac{\sin(c_0 t_i/R)}{c_0 t_i/R}
    \]
    for constants $c_0,C_1>0$. By choosing $c_0$ sufficiently small and then $C_1$ sufficiently large, we can ensure that $h\geq 1$ on $B(0,R)$. It is immediate from the definition that
    \[
        \sup_\Omega\lvert \mathcal{F}^{-1}[h]\rvert^2=C_1^2(R/(2c_0))^{2d}\qquad\text{and}\qquad \lvert\Omega\rvert=(c_0/R)^d.
    \]
    By Assumption~\aref{a:arm_decay_critical}, for all $R$ sufficiently large, $\inf_\Omega\rho^2$ is bounded below by a positive constant. Therefore combining all of the previous equations we have
    \[
        \P(A(f+\delta))\leq\P(A(f))+C\delta R^{d/2}
    \]
    for some $C>0$ depending only on the distribution of $f$. The probability on the right is bounded by Assumption~\aref{a:arm_decay_critical}, allowing us to conclude.
\end{proof}

Our next result controls the degeneracy imposed by the event $A^t_{x,y}$ as $t\to 1$ and $x\to y$. This degeneracy results in a bigger difference between the conditioned and unconditioned fields.

\begin{lemma}\label{l:dc_bound}
    Let $f$ satisfy Assumption~\aref{a:basic} and $k\in\N$, then there exists $C,\overline{c}>0$ such that the following holds: for any regular affine stratified sets $(D_1,\mathcal{F}_1)$, $(D_2,\mathcal{F}_2)$, distinct $x\in D_1$, $y\in D_2$, and $t\in[0,1)$
    \[
        \det\Cov\big[f(x),f^t(y),\nabla|_{F_1} f(x),\nabla|_{F_2} f^t(y)\big]\geq I_{d_{xy}}(1-t,\lvert x-y\rvert)
    \]
    and
    \[
        \E\big[\lvert\det\nabla|_{F_1}^2 f(x)\rvert^k\lvert\det\nabla|_{F_2}^2 f^t(y)\rvert^k\;\big|\;A^t_{x,y}\big]<C(1+\ell^{2dk}),
    \]
    where $d_{xy}$ denotes the minimum of the dimensions of the strata containing $x$ and $y$ and we recall that
    \[
        I_j(s,a):=\begin{cases}
            \overline{c} &\text{if }\lvert a\rvert\in[1,\infty),\\
            \overline{c}\lvert a\rvert^{2j}s &\text{if }\lvert a\rvert\in[\sqrt{s},1),\\
            \overline{c}s^{j+1} &\text{if }\lvert a\rvert\in[0,\sqrt{s}),
        \end{cases}
        \quad\text{and}\quad A^t_{x,y}:=\big\{f(x)=f^t(y)=\ell,\nabla|_{F_1} f(x)=0, \nabla|_{F_2} f^t(y)=0\big\}.
    \]
\end{lemma}
\begin{proof}
    The two statements are precisely Lemma 6 and Lemma 7 respectively of \cite{bmm24b}. 
\end{proof}

With these preliminaries, we can now control the conditional truncated arm decay:

\begin{proof}[Proof of Proposition~\ref{p:arm_conditional}]
    For $t\in[0,1)$ and $x,y\in\R^d$, we let $\widetilde{f}^t_{x,y}$ denote the field $f$ conditioned on the event $A^t_{x,y}$. Applying H\"older's inequality to the definition of the pivotal measure in \eqref{e:piv_measure},  using Lemma~\ref{l:dc_bound} and the fact that $\phi^t_{x,y}$ is a Gaussian density we have for any $k\geq 2$
    \begin{align*}
        \mu^t_{x,y}(\ind_{\mathrm{TArm}_x(R)})&\leq \E\big[\lvert\det\nabla|_{F_1}^2f(x)\rvert^2\lvert\det\nabla|_{F_2}^2 f^t(y)\rvert^k\;\big|\;A^t_{x,y}\big]^{1/k}\P\big(\widetilde{f}^t_{x,y}\in\mathrm{TArm}_x(R)\big)^{1-1/k}\phi^t_{x,y}\\
        &\leq CI_{d_{xy}}(1-t,x-y)^{-1/2}\P\big(\widetilde{f}^t_{x,y}\in\mathrm{TArm}_x(R)\big)^{1-1/k}.
    \end{align*}
    Since $k$ can be taken arbitrarily large, the statement of the proposition will follow if we can show that
    \begin{equation}\label{e:arm_decay0}
        \P\big(\widetilde{f}^t_{x,y}\in\mathrm{TArm}_x(R)\big)\leq C\Big(h_1(R)+\exp\big(-cI_{d_{xy}}(1-t,x-y)^2/h_2(R)^2\big)\Big).
    \end{equation}
    We do this separately for our different possible arm/covariance decay assumptions.

    \underline{Critical:} Suppose first that Assumption~\aref{a:arm_decay_critical} holds. By the union bound, for any $\delta>0$
    \begin{equation}\label{e:arm_decay1}
    \begin{aligned}
        \P\big(\widetilde{f}^t_{x,y}\in\mathrm{TArm}_x(R)\big)&\leq \P\Big(\partial B(x,r_1)\overset{\{f\geq\ell-\delta\}}{\longleftrightarrow}\partial B(x,r_2)\Big)\\
        &\qquad+\P\Big(\sup_{w\;:\;\lvert w-x\rvert,\lvert w-y\rvert\geq r_3}\lvert\widetilde{f}^t_{x,y}(w)-f(w)\rvert\geq\delta\Big)
    \end{aligned}
    \end{equation}
    where
    \[
        (r_1,r_2)=\begin{cases}
            (R^{1-2\alpha},R^{1-\alpha}) &\text{if }\lvert x-y\rvert\geq 2R^{1-\alpha},\\
            (3R^{1-\alpha},R) &\text{if }\lvert x-y\rvert<2R^{1-\alpha},
        \end{cases}
        \qquad\text{and}\qquad r_3=R^{1-2\alpha}.
    \]
    for some small parameter $\alpha\in(0,1/2)$. By Gaussian regression,
    \[
        \widetilde{f}_{x,y}^t(w)-f(w)=\Cov[f(w),V^t_{x,y}]\Cov[V^t_{x,y}]^{-1}(v_\ell-V^t_{x,y})^T
    \]
    where $X^T$ denotes the transpose of $X$,
    \[
        V^t_{x,y}:=(f(x),f^t(y),\nabla|_{F_x} f(x),\nabla|_{F_y} f^t(y))\qquad\text{and}\qquad v_\ell:=(\ell,\ell,0,0).
    \]
    By Lemma~\ref{l:dc_bound}, $\|\Cov[V^t_{x,y}]^{-1}\|_\infty\leq CI_{d_{xy}}(1-t,x-y)^{-1}$ for $C>0$ depending only the distribution of $f$. For any $w$ at distance at least $r_3$ from both $x$ and $y$
    \[
        \|\Cov[f(w),V^t_{x,y}]\|_\infty\leq C h(r_3):=\begin{cases}
            Cr_3^{-\beta} &\text{if Assumption~\aref{a:cov_decay_pol} holds,}\\
            Ce^{-cr_3} &\text{if Assumption~\aref{a:cov_decay_exp} holds.}
        \end{cases}
    \]
    Combining these last three observations with the standard Gaussian tail inequality applied to the vector $V^t_{x,y}$, there exists $C,c>0$ (depending on $\ell$, and the distribution of $f$) such that for $R\geq 1$
    \begin{equation}\label{e:arm_decay2}
        \P\Big(\sup_{w\;:\;\lvert w-x\rvert,\lvert w-y\rvert\geq r_3}\lvert\widetilde{f}^t_{x,y}(w)-f(w)\rvert\geq\delta\Big)\leq C\exp(-c\delta^2I_{d_{xy}}(1-t,x-y)^2/h(r_3)^2).
    \end{equation}
    Applying Lemma~\ref{l:arm_critical} to the first term on the right-hand side of \eqref{e:arm_decay1} and using \eqref{e:arm_decay2} for the second term, we have
    \begin{align*}
        \P\big(\widetilde{f}^t_{x,y}\in\mathrm{TArm}_x(R)\big)\leq C\Big(R^{-\alpha \epsilon}+\delta R^{d/2}+\exp(-c\delta^2R^{2\beta(1-2\alpha)}I_{d_{xy}}(1-t,x-y)^2)\Big).
    \end{align*}
    Setting $\delta=R^{-d}$, choosing $\alpha>0$ sufficiently small and recalling that $\beta>d$, verifies \eqref{e:arm_decay0} for some $h_1(R)=h_2(R)=R^{-\epsilon^\prime}$ where $\epsilon^\prime>0$.

    \underline{Non-critical:} Suppose now that the first case of Assumption~\aref{a:arm_decay} holds (i.e., the subcritical regime). Using the union bound once more, \eqref{e:arm_decay1} holds with
    \[
        (r_1,r_2)=\begin{cases}
            (R/8,R/4) &\text{if }\lvert x-y\rvert\geq R/2\\
            (3R/4,R) &\text{if }\lvert x-y\rvert<R/2
        \end{cases}
        \qquad\text{and}\qquad r_3=R/8.
    \]
    By the union bound and fixing $\delta>0$ sufficiently small, the first term on the right-hand side of \eqref{e:arm_decay1} is at most $Ce^{-cR}$ while the second term is bounded as in \eqref{e:arm_decay2}. This verifies \eqref{e:arm_decay0} with $h_1(R)=e^{-cR}$ and $h_2=h$ as above.

    \underline{Supercritical:} We now suppose that the second case of Assumption~\aref{a:arm_decay} holds. By Lemma~\ref{l:arm_event_supercrit}, if $\widetilde{f}^t_{x,y}\in\mathrm{TArm}_x(R)$ then there must exist a component of $\{\widetilde{f}^t_{x,y}\leq \ell\}\setminus (B(x,R/20)\cup B(y,R/20))$ of diameter at least $R/20$ which intersects or surrounds at least one of $\partial B(x,R/20)$ and $\partial B(y,R/20)$. Letting $\partial B(w,r)^+$ denote the points of $\Z^d$ within unit distance of $\partial B(w,r)$ and $\lfloor x\rfloor$ the integer part of $x$, by the union bound
    \begin{align*}
        \P\big(\widetilde{f}^t_{x,y}\in\mathrm{TArm}_x(R)\big)&\leq \P\Big(\sup_{w\;:\;\lvert w-x\rvert,\lvert w-y\rvert\geq R/20}\lvert\widetilde{f}^t_{x,y}(w)-f(w)\rvert\geq\delta\Big)\\
        &\quad +\sum_{v\in\{x,y\}}\sum_{j=\lfloor R/20\rfloor}^\infty\sum_{w\in\partial B(v,j)^+}\P\Big(w+\Lambda_1\overset{\{f\leq\ell+\delta\}}{\longleftrightarrow}w+\partial\Lambda_{\max\{j-R/20,R/20\}}\Big).
    \end{align*}
    By Assumption~\aref{a:arm_decay}, the sum on the right-hand side decays exponentially in $R$, while the first term on the right-hand side can be controlled as in the subcritical case. We conclude that \eqref{e:arm_decay0} also holds in the super-critical case.
\end{proof}

\section{Qualitative limit theorems}
In this section we use quasi-association and the additivity and stabilisation estimates to prove the previously stated qualitative limit theorems for topological functionals: the law of large numbers, variance asymptotics and central limit theorems. For clarity, we separate the proofs of the different parts of Theorem~\ref{t:var+clt}.

\subsection{Law of large numbers}
We establish the law of large numbers for excursion/level-set functionals by adapting Etemadi's argument for a sum of i.i.d.\ variables. A similar approach was applied to discrete quasi-associated fields in \cite[Chapter~4]{bs07}. We also make use of arguments from \cite{kw18} to show that the expectation of a topological functional in a box scales like the volume of the box.
\begin{proof}[Proof of Theorem~\ref{t:lln} (excursion/level-set functionals)]
    Let $\Phi$ be a bounded excursion/level-set functional. We abbreviate $\Phi_R:=\Phi(\Lambda_R,f)$ and recall that $\overline{\Phi}_R:=\Phi_R-\E[\Phi_R]$. By quasi-association (Corollary~\ref{c:quasi_association}), for any $R\geq 1$
    \begin{equation}\label{e:lln-1}
        \Var[R^{-d}\Phi_R]\leq R^{-2d}C\|\Phi\|_\mathrm{Lip}^2\int_{\Lambda_{R+2}^2}\widetilde{K}(x-y)\;dxdy=O((\log R)^{-1-\epsilon})
    \end{equation}
    as $R\to\infty$, where the final equality uses Assumption~\aref{a:cov_decay_weak}. Hence $R^{-d}\overline{\Phi}_R\to 0$ in $L^2$. Now let $\alpha>1$ and $R_n=\alpha^n$. By the previous equation, there exists $C>0$ such that for any $\delta>0$
    \[
        \sum_{n=1}^\infty\P(R_n^{-d}\lvert \overline{\Phi}_{R_n}\rvert\geq\delta)\leq \sum_{n=1}^\infty \delta^{-2}\Var[R_n^{-d}\Phi_{R_n}]\leq C\delta^{-2}\sum_{n=1}^\infty (n\log\alpha)^{-1-\epsilon}<\infty.
    \]
    Hence, by Borel-Cantelli, $\overline{\Phi}_{R_n}/R_n^d\to 0$ almost surely as $n\to\infty$.

    Let us now assume that the excursion/level component functional $\varphi$ (used to define $\Phi$ in \eqref{e:es_functional2}) is non-negative. It follows that $\Phi_R$ is non-decreasing in $R$ and so for $R_n\leq R\leq R_{n+1}$
    \[
        \frac{R_{n+1}^d}{R^d}\frac{\overline{\Phi}_{R_n}}{R_{n+1}^d}-\frac{\E[\Phi_{R_{n+1}}]-\E[\Phi_{R_n}]}{R_{n}^d}\leq\frac{\overline{\Phi}_R}{R^d}\leq \frac{R_{n+1}^d}{R^d}\frac{\overline{\Phi}_{R_{n+1}}}{R_{n+1}^d}+\frac{\E[\Phi_{R_{n+1}}]-\E[\Phi_{R_n}]}{R_n^d}.
    \]
    Using the previously established convergence, with probability one
    \begin{equation}\label{e:lln0}
        \limsup_{R\to\infty}\Big\lvert\frac{\overline{\Phi}_R}{R^d}\Big\rvert\leq \limsup_{n\to\infty}\frac{\E[\Phi_{R_{n+1}}-\Phi_{R_n}]}{R_n^d}.
    \end{equation}
    Since $\Phi(D,f)$ is defined by summing contributions from excursion/level-set components inside $D$ (see \eqref{e:es_functional2}), it follows that
    \[
        \Phi_{R_{n+1}}=\Phi_{R_n}+\Phi(\Lambda_{R_{n+1}},f,\overline{\Lambda_{R_{n+1}}\setminus\Lambda_{R_n}})
    \]
    where we recall that $\Phi(D,g,F)$ was defined (prior to Lemma~\ref{l:boundary_contribution}) as the contribution from components in $D$ which intersect $F$. Bounding the final term on the right-hand side in terms of critical points (Lemma~~\ref{l:boundary_contribution}) we see that \eqref{e:lln0} is bounded above by
    \[
        C\limsup_{n\to\infty}R_n^{-d}\E[\overline{N}_\mathrm{Crit}(\overline{\Lambda_{R_n+1}\setminus\Lambda_{R_n}},f)]\leq C\limsup_{n\to\infty}R_n^{-d}(R_{n+1}^d-R_n^d+R_{n+1}^{d-1}+R_{n+1}^{d-2}+\dots+1)\leq C(\alpha^d-1)
    \]
    where the first inequality uses our bound on the expected number of stratified critical points (Lemma~\ref{l:crit_moments}) applied to the minimal stratification of $\overline{\Lambda_{R_{n+1}}\setminus\Lambda_{R_n}}$. Since $\alpha>1$ was arbitrary, this establishes that $R^{-d}\overline{\Phi}_R\to 0$ almost surely under the assumption that $\varphi\geq 0$. For a general bounded excursion/level-set functional we can simply decompose $\varphi$ into positive and negative parts and apply the proven result.

    It remains to show that $R^{-d}\E[\Phi_R]$ converges to a constant as $R\to\infty$. For $1\leq r\leq R$ we define
    \[
        A(r,R)=\{x\in r\Z^d\;|\;x+\Lambda_r\subseteq\Lambda_R\}\qquad\text{and}\qquad D(r,R)=\overline{\Lambda_R\setminus\bigcup_{x\in A(r,R)}x+\Lambda_r}.
    \]
    Each excursion/level-set component contained in $\Lambda_R$ must either 1) be contained in $x+\Lambda_r$ for some $x\in A(r,R)$, 2) be contained in $D(r,R)$ or 3) intersect $x+\partial\Lambda_R$ for some $x\in A(r,R)$. Hence by the definition of an excursion/level-set functional, we have the decomposition
    \begin{equation}\label{e:lln1}
        \Phi(\Lambda_R)=\sum_{x\in A(r,R)}\Phi(x+\Lambda_r) + \Phi(D(r,R)) + \Phi\Big(\Lambda_R, \bigcup_{x\in A(r,R)}x+\partial\Lambda_r\Big)
    \end{equation}
    where we have dropped $f$ as an argument from the topological functionals for clarity. Using our bounds in terms of critical points once more (Lemmas~\ref{l:boundary_contribution} and~\ref{l:crit_moments})
    \[
        \E\Big[\Phi\Big(\Lambda_R, \bigcup_{x\in A(r,R)}x+\partial\Lambda_r\Big)\Big]\leq C\sum_{x\in A(r,R)}\E[\overline{N}_\mathrm{Crit}(x+\partial\Lambda_r,f)]\leq CR^dr^{-1}
    \]
    since $A(r,R)$ has at most $C(R/r)^d$ points and $\partial\Lambda_r$ is composed of a constant number of strata, each of which has volume at most $r^{d-1}$. Similarly, using Lemmas~\ref{l:top_crit} and~\ref{l:crit_moments}
    \[
        \E[D(r,R)]\leq CR^{d-1}r.
    \]
    Taking expectations of \eqref{e:lln1} and using stationarity of $f$, we therefore have
    \begin{equation}\label{e:lln2}
        \big\lvert\E[\Phi(\Lambda_R)]-\lvert A(r,R)\rvert\E[\Phi(\Lambda_r)]\big\rvert\leq C(R^dr^{-1}+R^{d-1}r),
    \end{equation}
    where $\lvert A(r,R)\rvert$ denotes the number of points in $A(r,R)$. Given $\epsilon>0$, we now choose $r$ such that
    \[
        \frac{C}{r}<\epsilon\qquad\text{and}\qquad\frac{\E[\Phi(\Lambda_r)]}{r^d}\geq \limsup_{R\to\infty}\frac{\E[\Phi(\Lambda_R)]}{R^d}-\epsilon.
    \]
    Using this choice of $r$ in \eqref{e:lln2}, we then have
    \[
        \liminf_{R\to\infty}\frac{\E[\Phi(\Lambda_R)]}{R^d}\geq \liminf_{R\to\infty}\Big(\frac{r}{R}\Big)^d\lvert A(r,R)\rvert\frac{\E[\Phi(\Lambda_r)]}{r^d}-C\Big(\frac{1}{r}+\frac{r}{R}\Big)\geq \limsup_{R\to\infty}\frac{\E[\Phi(\Lambda_R)]}{R^d}-2\epsilon.
    \]
    Since $\epsilon>0$ was arbitrary, this completes the proof of convergence.
\end{proof}
\begin{remark}\label{r:lln_non-stationary}
    The argument given above to show that $R^{-d}\overline{\Phi}_R\to 0$ almost surely and in $L^2$ could likely be extended to deal with suitable non-stationary fields. Stationarity was used in two ways: 1) to show that the expected number of (stratified) critical points in a domain is bounded by a constant times the volume of the domain and 2) to justify the bound on pivotal measures in Proposition~\ref{p:intensity_integrable} (proven in \cite{bmm24b}). The former condition could instead be justified by imposing a uniform bound on the first few derivatives of the covariance function. The latter condition could likely be derived under some uniform non-degeneracy assumptions by repeating the arguments given in \cite{bmm24b}. Since the corresponding proofs are reasonably long, we do not pursue this issue here in the interest of conciseness.
\end{remark}

The previous argument for almost sure convergence does not immediately apply to the Euler characteristic: the difficulty arises from accounting for boundary effects in the monotonicity part of the proof. We instead give a short direct proof based on the ergodic theorem.

\begin{proof}[Proof of Theorem~\ref{t:lln} (Euler characteristic)]
    Letting $\Phi$ denote the Euler characteristic, by the critical point decomposition in Lemma~\ref{l:ec_decomp}, we have
    \[
        \lvert\Phi_R-\Psi_R\rvert\leq\overline{N}_\mathrm{Crit}(\partial\Lambda_R,f)\qquad\text{where}\qquad\Psi_R:=\sum_{i=0}^d(-1)^{d-i}N_\mathrm{Crit}^{(i)}(\Lambda_R,f,\ell).
    \]
    Using stationarity and the bounds on critical point moments in Lemma~\ref{l:crit_moments}, we see that
    \[
        \frac{\E[\Phi_R]}{R^d}\to\E[\Psi_1]\qquad\text{and}\qquad\frac{\Phi_R-\Psi_R}{R^d}\overset{L^2}{\longrightarrow}0
    \]
    as $R\to\infty$. Using the same bound once more,
    \[
        \sum_{n=1}^\infty\frac{\E[(\Phi_n-\Psi_n)^2]}{n^{2d}}\leq C\sum_{n=1}^\infty\frac{n^{2(d-1)}}{n^{2d}}<\infty,
    \]
    and so by Borel-Cantelli, $n^{-d}\lvert\Phi_n-\Psi_n\rvert\to 0$ almost surely as $n\to\infty$. It remains to show that $R^{-d}\Psi_R$ converges almost surely and in $L^2$ to its mean as $R\to\infty$. This follows directly from the ergodic theorem \cite[Theorem~25.6]{kal21} since $\Psi_R$ is a stationary, square integrable functional of $f$ (which is ergodic by the assumption that its covariance decays to zero).
\end{proof}

\subsection{Variance asymptotics}\label{ss:variance}
We now use the covariance formula (Theorem~\ref{t:cov_formula}) to study the asymptotic variance of topological functionals. Specifically, we use a stabilisation argument to prove convergence of the variance normalised by the volume for fields with integrable correlations. The same argument was used in \cite{bmm24b} to study the component count.

\begin{proof}[Proof of Theorem~\ref{t:var+clt} (variance asymptotics)]
    Let $\Phi$ be a bounded excursion/level-set functional or the Euler characteristic and recall that $\mathcal{F}_R$ denotes the minimal stratification of $\Lambda_R$. By the covariance formula for Lipschitz topological functionals (Theorem~\ref{t:cov_formula})
    \begin{equation}\label{e:var_limit1}
        \Var[\Phi_R]=\sum_{F_1,F_2\in\mathcal{F}_R}\underbrace{\int_{F_1\times F_2}K(x-y)\int_0^1\mu^t_{x,y}(d_x\Phi_Rd_y^t\Phi_R)\;dtdv_{F_1}(x)dv_{F_2}(y)}_{:=J(F_1,F_2)}.
    \end{equation}
    We first show that the contributions from boundary strata are negligible. Since $\Phi$ is Lipschitz, by the pivotal measure bound (Proposition~\ref{p:intensity_integrable}) and integrability of $\widetilde{K}$ (Assumption~\aref{a:cov_decay_int}), for a boundary stratum $F_1$ and arbitrary $F_2$
    \[
        \lvert J(F_1,F_2)\rvert\leq C\|\Phi\|^2_\mathrm{Lip}\int_{F_1^{+1}}1+\int_{F_2^{+1}}\widetilde{K}(x-y)\;dydx\leq Cv_{F_1}(F_1).
    \]
    Swapping the order of integration yields the upper bound $Cv_{F_2}(F_2)$ whenever $F_2$ is a boundary stratum. We conclude that the contribution to \eqref{e:var_limit1} from all terms involving a stratum of dimension less than $d$ is $O(R^{d-1})$ as $R\to\infty$. 
    
    Recalling the definition of $\sigma$ in \eqref{e:sigma} and using stationarity, we have
    \begin{align*}
        J(\Lambda_R,\Lambda_R)-\sigma^2R^d&=\underbrace{\int_{\Lambda_{R}^2}K(x-y)\int_0^1\mu^t_{x,y}(d_x\Phi_Rd_y^t\Phi_R-d_x\Phi_\infty d_y^t\Phi_\infty)\;dtdxdy}_{=:\Circled{\mathrm{A}}}\\
        &\qquad-\underbrace{\int_{\Lambda_R\times\R^d\setminus\Lambda_{R+1}}K(x-y)\int_0^1\mu^t_{x,y}(d_x\Phi_\infty d_y^t\Phi_\infty)\;dtdxdy}_{=:\Circled{\mathrm{B}}}\\
        &\qquad-\underbrace{\int_{\Lambda_R\times\Lambda_{R+1}\setminus\Lambda_{R}}K(x-y)\int_0^1\mu^t_{x,y}(d_x\Phi_\infty d_y^t\Phi_\infty)\;dtdxdy.}_{=:\Circled{\mathrm{C}}}
    \end{align*}
    The (first) statement of the theorem will follow if we can show that each of the final three terms are $o(R^d)$ as $R\to\infty$. Proposition~\ref{p:stabilisation} states that this holds for $\Circled{\mathrm{A}}$. By the fact that $\Phi$ is Lipschitz, and the pivotal measure bound in Proposition~\ref{p:intensity_integrable},
    \[
        \lvert\Circled{\mathrm{C}}\rvert\leq C\|\Phi\|_\mathrm{Lip}^2\int_{\Lambda_{R+1}\setminus\Lambda_R}1+\int_{\R^d}\lvert K(x-y)\rvert\;dxdy=O(R^{d-1}).
    \]
    By the same reasoning, and recalling that $\|\cdot\|_\infty$ denotes the sup-norm on $\R^d$,
    \begin{align*}
        \lvert\Circled{\mathrm{B}}\rvert\leq C\|\Phi\|_\mathrm{Lip}^2\int_{\R^{2d}}\lvert K(x-y)\rvert\ind_{\| x\|_\infty<R<\| y\|_\infty}\;dxdy&\leq C\|\Phi\|_\mathrm{Lip}^2\int_{\R^{2d}}\lvert K(u)\rvert \ind_{R-\|u\|_\infty<\|x\|_\infty<R}\;dxdu\\
        &\leq C\|\Phi\|_\mathrm{Lip}^2\int_{\R^{d}}\lvert K(u)\rvert R^{d-1}\min\{R,\|u\|_\infty\}\;du.
    \end{align*}
    Since $K$ is integrable, a dominated convergence argument shows that the final expression is $o(R^d)$.
\end{proof}

\begin{remark}\label{r:var_ec}
    The formula for the asymptotic variance $\sigma^2$ given in \eqref{e:sigma} involves topological derivatives, which are challenging to evaluate explicitly for non-local functionals. However in the case of the Euler characteristic, the topological derivative simplifies substantially: by the critical point decomposition in Lemma~\ref{l:ec_decomp}, for $x$ in the interior of $\Lambda_R$ and $g\in\mathrm{Morse}_x(\Lambda_R,\mathcal{F}_R,\ell)$
    \[
        d_x\Phi(\Lambda_R,g,\ell)=(-1)^{d-\mathrm{index}\nabla^2 g(x)}
    \]
    where $\mathrm{index}\nabla^2g(x)$ denotes the number of negative eigenvalues of $\nabla^2 g(x)$. Then by definition of the pivotal measure \eqref{e:piv_measure} and the fact that $\lvert\det\nabla^2 f(x)\rvert=(-1)^{\mathrm{index}\nabla^2 f(x)}\det\nabla^2 f(x)$ we have
    \[
        \sigma^2=\int_{\R^d}K(x)\int_0^1\E\big[\det\big(\nabla^2f(x)\nabla^2f^t(0)\big)\big|A^t_{x,0}\big]\phi^t_{x,0}\;dtdx.
    \]
\end{remark}

\subsection{Central limit theorems}
We end this section by establishing the distributional and almost sure CLTs stated in Theorem~\ref{t:var+clt}. The distributional CLT essentially follows from Newman's argument, originally given for discrete associated fields \cite{new80}.

\begin{proof}[Proof of Theorem~\ref{t:var+clt} (distributional CLT)]
    For a random variable $X$ and $t\in\R$, we let $\varphi_t(X)=\E[e^{itX}]$ denote its characteristic function and we recall that $\overline{X}=X-\E[X]$ and $\widetilde{\Phi}_R=R^{-d/2}\overline{\Phi}_R$. Letting $e_R:=\Phi(\Lambda_R)-\sum_{x\in\chi_R}\Phi(x+\Lambda_r)$ and applying the inequality $\lvert e^{iu}-e^{iv}\rvert\leq \lvert u-v\rvert$ for $u,v\in\R$, we have
    \begin{equation}\label{e:clt1}
        \Big\lvert\varphi_t(\widetilde{\Phi}_R)-\varphi_t\Big(R^{-d/2}\Big(\sum_{x\in\chi_R}\overline{\Phi}(x+\Lambda_r)\Big)\Big)\Big\rvert\leq\frac{\lvert t\rvert}{R^{d/2}}\E\big[\lvert e_R-\E[e_R]\rvert\big]\leq\lvert t\rvert\Big(\frac{\Var[e_R]}{R^d}\Big)^{1/2}\to 0,
    \end{equation}
    as $R\to\infty$, by Proposition~\ref{p:additivity}.

    We now choose some ordering $\prec$ (e.g., lexicographic) on points in $\chi_R$. By repeated use of the triangle inequality, the bound $\lvert e^{it}\rvert\leq 1$ and quasi-association (Corollary~\ref{c:quasi_association})
    \begin{equation}\label{e:clt2}
    \begin{aligned}
        \Big\lvert&\varphi_t\Big(R^{-d/2}\sum_{x\in\chi_R}\overline{\Phi}(x+\Lambda_r)\Big)-\prod_{x\in\chi_R}\varphi_t(R^{-d/2}\overline{\Phi}(x+\Lambda_r))\Big\rvert\\
        &\qquad\leq\sum_{x\in\chi_R}\Big\lvert\Cov\big[\exp(itR^{-d/2}\overline{\Phi}(x+\Lambda_r)),\prod_{z\prec x}\exp(itR^{-d/2}\overline{\Phi}(z+\Lambda_r))\big]\Big\rvert\\
        &\qquad\leq C\|\Phi\|_\mathrm{Lip}^2\frac{\lvert t\rvert^2}{R^d}\sum_{x\in\chi_R}\int_{\Lambda_{r+2}\times(\R^d\setminus\Lambda_{r+4a-2})}\widetilde{K}(x-y)\;dxdy\\
        &\qquad\leq C\|\Phi\|_\mathrm{Lip}^2\lvert t\rvert^2\int_{\R^d\setminus\Lambda_a}\widetilde{K}(y)\;dy\to 0\qquad\text{as }R\to\infty.
    \end{aligned}
    \end{equation}

    Combining \eqref{e:clt1} and \eqref{e:clt2}, the theorem will follow if we can show that, as $R\to\infty$
    \begin{equation}\label{e:clt3}
        \prod_{x\in\chi_R}\E[\exp(itR^{-d/2}\overline{\Phi}(x+\Lambda_r))]\to\exp(-\sigma^2t^2/2).
    \end{equation}
    Using the elementary inequality
    \[
        e^{iu}=1+iu-u^2/2+\mathcal{R}(u)\qquad\text{for }u\in\R\text{ where }\lvert\mathcal{R}(u)\lvert\leq\lvert u\rvert^3,
    \]
    stationarity of $f$ and the fact that $\overline{\Phi}$ is centred, the left hand side of \eqref{e:clt3} can be expressed as
    \[
        \Big(1-(r/R)^{d}\frac{t^2\Var[\Phi_r]}{r^{d}}+\E\big[\mathcal{R}\big(tR^{-d/2}\overline{\Phi}_r\big)\big]\Big)^{(R/(r+4a))^d}.
    \]
    If $\alpha_n$ is a sequence in $\mathbb{C}$ converging to $\alpha$, then, by a standard analytic argument, $(1+\alpha_n/n)^n\to e^\alpha$. Applying this to the above expression, recalling that $a=o(r)$, to establish \eqref{e:clt3} it is sufficient that
    \[
        \frac{\Var[\Phi_r]}{r^d}\to\sigma^2\qquad\text{and}\qquad(R/r)^d\E\big[\mathcal{R}\big(tR^{-d/2}\overline{\Phi}_r\big)\big]\to 0
    \]
    as $R\to\infty$. The former statement holds by the first part of Theorem~\ref{t:var+clt}. Turning to the latter statement; using our bounds in terms of the number of critical points (Lemma~\ref{l:top_crit} and Lemma~\ref{l:crit_moments})
    \[
        (R/r)^d\E\big[\mathcal{R}\big(tR^{-d/2}\overline{\Phi}_r\big)\big]\leq \lvert t\rvert^3r^{-d}R^{-d/2}\E\big[\lvert\overline{\Phi}_r\rvert^3\big]\leq C\lvert t\rvert^3r^{2d}R^{-d/2}.
    \]
    Choosing $r<R^{1/5}$ ensures that this tends to zero as $R\to\infty$, verifying the second statement above and so completing the proof of the theorem.
\end{proof}

Our proof of the almost sure CLT adapts arguments from \cite{ps95,vro00} for sums of stationary, associated random variables. We also refer the reader to \cite{mrz25} which establishes an almost sure CLT for local functionals of smooth Gaussian fields using a very different method of proof.

\begin{proof}[Proof of Theorem~\ref{t:var+clt} (almost sure CLT)]
    Let $F:\R\to\R$ be Lipschitz continuous and define
    \[
        S_R=\frac{1}{\log R}\int_1^Rr^{-1}F(\widetilde{\Phi}_r)\;dr.
    \]
    For $r<s$, by quasi-association (Corollary~\ref{c:quasi_association})
    \[
        \lvert\Cov[F(\widetilde{\Phi}_r),F(\widetilde{\Phi}_s)]\rvert\leq C(rs)^{-d/2}\|F\|_\mathrm{Lip}^2\|\Phi\|_\mathrm{Lip}^2\int_{\Lambda_{r+2}\times\Lambda_{s+2}}\widetilde{K}(x-y)\;dxdy\leq C r^{d/2}s^{-d/2}.
    \]
    Hence
    \begin{align*}
        \Var[S_R]\leq \frac{2C}{(\log R)^2}\int_1^R\int_1^sr^{d/2-1}s^{-d/2-1}\;drds\leq \frac{C}{\log R}.
    \end{align*}
    We now define the sequence $R_n:=e^{n^2}$ so that, by Chebyshev's inequality, for any $\epsilon>0$
    \[
        \sum_{n=1}^\infty\P(\lvert \overline{S}_{R_n}\rvert>\epsilon)\leq \epsilon^{-2}\sum_{n=1}^\infty\Var[S_{R_n}]\leq C\epsilon^{-2}\sum_{n=1}^\infty\frac{1}{n^2}<\infty
    \]
    and hence, by Borel-Cantelli, $S_{R_n}-\E[S_{R_n}]\to0$ almost surely as $n\to\infty$.

    We next observe that $\lvert F(\widetilde{\Phi}_r)\rvert\leq\lvert F(0)\rvert+\|F\|_\mathrm{Lip}\lvert\widetilde{\Phi}_r\rvert$ which is uniformly integrable (over $r\geq 1$) by the variance asymptotics of Theorem~\ref{t:var+clt}. By the continuous mapping theorem and the distributional convergence in Theorem~\ref{t:var+clt}, $F(\widetilde{\Phi}_r)$ converges in distribution, as $r\to\infty$, to $F(\sigma Z)$ where $Z\sim\mathcal{N}(0,1)$. Hence by Vitali's convergence theorem $\E[F(\widetilde{\Phi}_r)]\to\E[F(\sigma Z)]$. By an elementary argument
    \[
        \E[S_{R}]=\frac{\int_1^Rr^{-1}\E[F(\widetilde{\Phi}_r)]\;dr}{\int_1^Rr^{-1}\;dr}\to\E[F(\sigma Z)]\qquad\text{as }R\to\infty,
    \]
    and so we conclude that $S_{R_n}\to\E[F(\sigma Z)]$ almost surely as $n\to\infty$.

    It remains to control behaviour of $S_R$ away from the subsequence $R_n$. By the triangle inequality
    \begin{equation}\label{e:asclt1}
        \sup_{R\in[R_n,R_{n+1}]}\lvert S_R-S_{R_n}\rvert\leq \sup_{R\in[R_n,R_{n+1}]}\Big\lvert S_R-\frac{\log R_n}{\log R}S_{R_n}\Big\rvert+\Big\lvert \frac{\log R_n}{\log R_{n+1}}-1\Big\rvert\lvert S_{R_n}\rvert.
    \end{equation}
    Since the sequence $\lvert S_{R_n}\rvert$ is bounded almost surely and recalling that $R_n=e^{n^2}$, we see that the second term on the right-hand side above tends to zero almost surely. The first term on the right-hand side is bounded by
    \begin{equation}\label{e:asclt2}
        \frac{1}{\log R_n}\int_{R_n}^{R_{n+1}}r^{-1}\lvert F(\widetilde{\Phi}_r)\rvert\;dr\leq \frac{1}{\log R_n}\big(\log(R_{n+1})S^\prime_{R_{n+1}}-\log(R_n) S^\prime_{R_n}\Big)
    \end{equation}
    where $S^\prime_R$ is defined as the analogue of $S_R$ with $\lvert F\rvert$ instead of $F$. Our previous subsequential arguments apply equally well to $\lvert F\rvert$ and so $S^\prime_{R_n}\to\E[\lvert F(\sigma Z)\rvert]$ almost surely as $n\to\infty$. Hence \eqref{e:asclt2} and \eqref{e:asclt1} both converge to zero almost surely as $n\to\infty$, completing the proof of the theorem.
\end{proof}

\section{Quantitative limit theorems}
In this section, we prove the quantitative limit theorems for topological functionals stated in Section~\ref{ss:quant_lim}.

\subsection{Variance asymptotics}
We begin by establishing the following rate of convergence for the normalised variance of topological functionals which we then use as an input to the quantitative CLT.
\begin{theorem}\label{t:variance_limit_quant}
    Let $f$ satisfy Assumptions~\aref{a:basic} and~\aref{a:cov_decay_pol} and $\ell\in\R$. Suppose that either
    \begin{enumerate}
        \item $\Phi$ is the Euler characteristic,
        \item $f$ satisfies Assumption~\aref{a:arm_decay} at level $\ell$, or
        \item $f$ satisfies Assumption~\aref{a:arm_decay_critical} at level $\ell$,
    \end{enumerate}
    then for $\sigma$ defined in \eqref{e:sigma}, as $R\to\infty$
    \begin{equation}\label{e:var_quant}
        \Var[\widetilde{\Phi}_R]=\sigma^2+O(R^{-\gamma})
    \end{equation}
    where in the first case $\gamma=\min\{1,\beta-d\}$, in the second case $\gamma=\min\{1,\beta-d,\beta/(2d+2)\}$ and in the third case $\gamma>0$.
\end{theorem}

\begin{proof}[Proof of Theorem~\ref{t:variance_limit_quant}]
    Assuming that $\Phi$ is the Euler characteristic, from the proof of the variance asymptotics in Section~\ref{ss:variance}, we see that for all $R$ sufficiently large
    \begin{align*}
        \big\lvert \Var[\widetilde{\Phi}_R]-\sigma^2\big\rvert&\leq C\Big(R^{-1}+\int_{\R^d}\lvert K(x)\rvert\min\{1,\lvert x\rvert/R\}\;dx\Big)\\
        &\leq C\Big(R^{-1}+\int_{\lvert x\rvert>R}\lvert x\rvert^{-\beta}\;dx+R^{-1}\int_{\lvert x\rvert\leq R}(1+\lvert x\rvert)^{-\beta}\lvert x\rvert\;dx\Big)\leq CR^{-\min\{1,\beta-d\}}
    \end{align*}
    using the bound on the covariance in Assumption~\aref{a:cov_decay_pol}.

    If $\Phi$ is a bounded excursion/level-set functional, then from the same proof we have the extra error term
    \[
        R^{-d}\int_{\Lambda_{R}^2}K(x-y)\int_0^1\mu^t_{x,y}(d_x\Phi_Rd_y^t\Phi_R-d_x\Phi_\infty d_y^t\Phi_\infty)\;dtdxdy.
    \]
    By Proposition~\ref{p:q_stabilisation}, this term is $O(R^{-\min\{1,\beta/(2d+2)\}})$ whenever Assumption~\aref{a:arm_decay} holds and is $O(R^{-\epsilon})$ for some $\epsilon>0$ whenever Assumption~\aref{a:arm_decay_critical} holds, proving the desired bound.
\end{proof}

\subsection{Higher moment bounds}
In this section we prove the bounds on centred moments of topological functionals in Theorem~\ref{t:higher_moments}. Several approaches have been developed to study the higher moments of sums of discrete (quasi)-associated random fields (see \cite[Chapter~2]{bs07} for a selection). We will make use of the following result, which was originally proven in \cite{bb97}.

We define a \emph{block} to be any set of the form $\Z^d\cap\prod_{i=1}^d[a_i,b_i]$ where $a_i<b_i$ for all $i$ and we denote by $\mathcal{U}$ the set of blocks in $\Z^d$. We let $\dist_\infty$ and $\diam_\infty$ respectively denote the distance and diameter defined by the sup-norm on $\R^d$.

\begin{theorem}[{\cite[Theorem~1.20]{bs07}}]\label{t:moment_condition}
    Let $(X_j)_{j\in\Z^d}$ be a collection of centred random variables, $n\in \N$ and $p>2n$ such that $D_p:=\sup_{j\in\Z^d}\E[\lvert X_j\rvert^p]<\infty$. Suppose that for any $i_1,\dots,i_M,j_1,\dots,j_N\in\Z^d$ (not necessarily distinct) with $M+N\leq 2n$
    \begin{equation}\label{e:moment_condition}
        \lvert\Cov[X_{i_1}\dots X_{i_M},X_{j_1}\dots X_{j_N}]\rvert\leq D_p^{\frac{N+M}{p}}F(\dist_\infty(I,J))G(\lvert I\rvert,\lvert J\rvert)
    \end{equation}
    for non-decreasing functions $F$ and $G$, where $I=\{i_1,\dots,i_M\}$ and $J=\{j_1,\dots,j_N\}$. Then for any block $U\in\mathcal{U}$
    \begin{equation}\label{e:moment_bound}
        \E\Big[\Big(\sum_{i\in U}X_i\Big)^{2n}\Big]\leq C_{n,d}\lvert U\rvert^n\Big(D_{2n}+D_p^\frac{2n}{p}\max_{a+b=2n}G(a,b)\sum_{k=1}^{\diam_\infty(U)}k^{nd-1}F(k)\Big)
    \end{equation}
    where $C_{n,d}>0$ depends only on $n$ and $d$.
\end{theorem}

The proof of this result uses a combinatorial argument and \eqref{e:moment_condition} to control the expansion of the left-hand side of \eqref{e:moment_bound}. The condition \eqref{e:moment_condition} can be verified using quasi-association and a truncation argument (which we adapt from the proof of \cite[Lemma~1.22]{bs07}):

\begin{lemma}\label{l:moment_condition}
    Let $f$ satisfy Assumptions~\aref{a:basic}, \aref{a:smooth} and \aref{a:cov_decay_pol} and let $\Phi$ be a Lipschitz topological functional. For $M,N\in\N$, let $A_1,\dots,A_M,B_1,\dots,B_N\subset\R^d$ be regular affine stratified sets and suppose that at most $m$ strata of all of these sets intersects any unit ball. Denoting $A=\cup_{i=1}^M A_i$ and $B=\cup_{i=1}^N B_i$, for any integer $p>M+N$
    \begin{align*}
        \Big\lvert\Cov\Big[\prod_{i=1}^M\overline{\Phi}(A_i),\prod_{j=1}^N\overline{\Phi}(B_j)\Big]\Big\rvert\leq C(M+N)\Big(\min\{\lvert A^{+2}\rvert,\lvert B^{+2}\rvert\}&(1+\dist(A^{+2},B^{+2}))^{-\beta+d}\Big)^{\frac{p-M-N}{p-2}}\\
        &\times\sup_{D\in\{A_1,\dots,A_M,B_1,\dots,B_N\}}\E[\lvert\overline{\Phi}(D)\rvert^p]^\frac{M+N-2}{p-2}
    \end{align*}
    where $C>0$ depends only on the distribution of $f$, the Lipschitz norm of $\Phi$ and $m$.
\end{lemma}
\begin{proof}
    If $M=N=1$, then the bound follows immediately from quasi-association (Corollary~\ref{c:quasi_association}), so we suppose that $M+N>2$. For $T>0$ to be determined later, define $H_T:\R\to\R$ by
    \[
        H_T(x)=\begin{cases}
            T &\text{if }x>T\\
            x &\text{if }-T\leq x\leq T\\
            -T &\text{if }x<-T
        \end{cases}
    \]
    and $h_T(x)=x-H_T(x)$. We also denote $Y_i=\overline{\Phi}(A_i)$ and $Y_{M+j}=\overline{\Phi}(B_j)$ for brevity. By linearity
    \begin{equation}\label{e:moment_condition1}
    \begin{aligned}
        \Cov[Y_1\dots Y_M,Y_{M+1}\dots&\, Y_{M+N}]\\
        &=\Cov[h_T(Y_1)Y_2\dots Y_M,Y_{M+1}\dots Y_{M+N}]\\
        &\quad+\Cov[H_T(Y_1)h_T(Y_2)Y_3\dots Y_M,Y_{M+1}\dots Y_{M+N}]\\
        &\quad+\dots\\
        &\quad+\Cov[H_T(Y_1)\dots H_T(Y_M),H_T(Y_{M+1})\dots H_T(Y_{M+N-1})h_T(Y_{M+N})]\\
        &\quad+\Cov[H_T(Y_1)\dots H_T(Y_M),H_T(Y_{M+1})\dots H_T(Y_{M+N})].
    \end{aligned}
    \end{equation}
    Since $H_T$ is bounded, we observe that $H_T(Y_1)\dots H_T(Y_M)$ and $H_T(Y_{M+1})\dots H_T(Y_{M+N})$ are both Lipschitz topological functionals applied respectively to $A$ and $B$ with the stratifications induced by the stratifications of $A_1,\dots, A_M$ and $B_1,\dots,B_N$ respectively. Hence by quasi-association (Corollary~\ref{c:quasi_association})
    \begin{align*}
        \lvert \Cov[H_T(Y_1)\dots H_T(Y_M),H_T(Y_{M+1})\dots &\,H_T(Y_{M+N})]\rvert\leq C m^2 T^{M+N-2}\|\Phi\|_{\mathrm{Lip}}^2\int_{A^{+2}\times B^{+2}}\widetilde{K}(x-y)\;dxdy\\
        &\leq C T^{M+N-2} \underbrace{\min\{\lvert A^{+2}\rvert,\lvert B^{+2}\rvert\}(1+\dist(A^{+2},B^{+2}))^{-\beta+d}}_{=:\mathcal{T}_1}
    \end{align*}
    where the second inequality uses Assumption~\aref{a:cov_decay_pol}.

    Using the trivial bound $\lvert h_T(x)\rvert\leq T^{-(p-M-N)}\lvert x\rvert^{1+p-M-N}$, for $i\leq M$ the $i$-th term on the right-hand side of \eqref{e:moment_condition1} is bounded in absolute value by
    \begin{align*}
        &T^{-(p-M-N)}\big(\E\big[\lvert Y_i\rvert^{p-M-N}\lvert Y_1\dots Y_{M+N}\rvert\big]+\E\big[\lvert Y_i\rvert^{p-M-N}\lvert Y_1\dots Y_{M}\rvert\big]\E[\lvert Y_{M+1}\dots Y_{M+N}\rvert]\big)\\
        &\qquad\qquad \leq 2T^{-(p-M-N)}\underbrace{\sup_{j=1,\dots,M+N}\E[\lvert Y_j\rvert^p]}_{=:\mathcal{T}_2}
    \end{align*}
    where we have used Lyapunov's inequality on the second term, followed by H\"older's inequality on both terms along with the fact that each $Y_j$ has finite $p$-th moment, courtesy of Lemmas~\ref{l:top_crit} and~\ref{l:crit_moments}. An analogous argument yields the same bound whenever $i>M$, and so we conclude that
    \[
        \lvert\Cov[Y_1\dots Y_M,Y_{M+1}\dots\, Y_{M+N}]\rvert\leq C(M+N) \big(T^{M+N-2}\mathcal{T}_1+T^{-(p-M-N)}\mathcal{T}_2\big).
    \]
    Setting $T=(\mathcal{T}_2/\mathcal{T}_1)^{\frac{1}{p-2}}$ yields the bound in the statement of the lemma.
\end{proof}

With these results in hand, it is straightforward to control the higher moments of the Euler characteristic using its exact additivity.

\begin{proof}[Proof of Theorem~\ref{t:higher_moments} (Euler characteristic)]
    We start by decomposing the Euler characteristic in an additive way. Given $a>0$, we let $\mathcal{L}_a:=(a/2,\dots,a/2)+a\Z^d$. We define an order $\prec$ on the points of $\mathcal{L}_a$ in such a way that for any $n\in\N$, if $i\in\Lambda_{2na}$ and $j\notin\Lambda_{2na}$ then $i\prec j$. (So roughly speaking, we order all of the points inside any given cube before moving on to points outside the cube.) For $i\in\mathcal{L}_a$, we set
    \[
        X_i=\overline{\Phi}(i+\Lambda_a)-\overline{\Phi}\Big((i+\partial\Lambda_a)\cap\bigcup_{j\prec i}(j+\partial\Lambda_a)\Big).
    \]
    We note that the final term above can be decomposed as a signed sum of the (centred) Euler characteristic applied to closed faces of $i+\partial\Lambda_a$. It then follows from additivity of the Euler characteristic that for any $n\in\N$,
    \begin{equation}\label{e:ec_higher1}
        \overline{\Phi}(\Lambda_{2na})=\sum_{i\in\mathcal{L}_a\cap\Lambda_{2na}}X_i.
    \end{equation}
    Given $R\geq 1$, we choose $a\in[1,2]$ such that $R/a$ is even. We apply Lemma~\ref{l:moment_condition} to conclude that for any $M,N\in\N$ and $i_1,\dots,i_M,j_1,\dots,j_N\in\mathcal{L}_a$
    \begin{align*}
        \lvert\Cov[X_{i_1}\dots X_{i_M},X_{j_1}\dots X_{j_N}]\rvert\leq C(M+N)\big(\min\{\lvert I\rvert,\lvert J\rvert\}\dist_\infty(I,J)^{-\beta+d}\big)^{\frac{p-M-N}{p-2}}D_p^\frac{M+N-2}{p-2}
    \end{align*}
    where $D_p:=\sup_{i\in\mathcal{L}_a}\E[\lvert X_i\rvert^p]$ and we recall that $I=\{i_1,\dots,i_M\}$ and $J=\{j_1,\dots,j_N\}$. Assuming now that $M+N\leq 2n$ and setting
    \[
        G(x,y)=2Cn\min\{x,y\}\qquad\text{and}\qquad F(x)=x^{-(\beta-d)\frac{p-2n}{p-2}}\times
        \begin{cases}
            D_p^{-\frac{2}{p-2}} &\text{if }D_p\geq 1\\
            D_p^{-\frac{2(p-2n)}{p(p-2)}} &\text{if }D_p<1,
        \end{cases}
    \]
    we may apply Theorem~\ref{t:moment_condition} to conclude that for any box $B=[a_1,b_1]\times\dots\times[a_d,b_d]$
    \begin{align*}
        \E\Big[\Big(\sum_{i\in B\cap\mathcal{L}_a}X_i\Big)^{2n}\Big]\leq C\lvert B\cap\mathcal{L}_a\rvert^n\Big(D_{2n}+\Big(\sum_{k=1}^{\infty}k^{nd-1-(\beta-d)\frac{p-2n}{p-2}}\Big)\times\max\Big\{D_p^{\frac{2n}{p}-\frac{2}{p-2}},D_p^{\frac{2n}{p}-\frac{2(p-2n)}{p(p-2)}}\Big\}\Big).
    \end{align*}
    Since $\beta>(n+1)d$ by assumption, taking $p$ sufficiently large ensures that the above sum is finite. The remaining terms in parentheses are then bounded uniformly over $a\in[1,2]$ since the Euler characteristic has finite moments of all orders (Lemmas~\ref{l:top_crit} and~\ref{l:crit_moments} and Assumption~\aref{a:smooth}). Setting $B=\Lambda_R$, we conclude by \eqref{e:ec_higher1}.
\end{proof}
\begin{remark}\label{r:ec_moments}
    We note that the above proof actually applies to all boxes, not only the cubes $\Lambda_R$. Specifically if $a$ is fixed, then there exists $C>0$ such that for any box $B=[a_1, b_1]\times\dots\times[a_d,b_d]$
    \[
        \E\Big[\Big(\sum_{i\in B\cap\mathcal{L}_a}X_i\Big)^{2n}\Big]\leq C\lvert B\rvert^{n}.
    \]
    We will make use of this observation later in the proof of Theorem~\ref{t:ec_lil}.
\end{remark}

The corresponding results for bounded excursion/level set functionals require more careful analysis due to the absence of (exact) additivity. To circumvent this, we use the two-scale approximate additivity from Section~\ref{s:additivity+stabilisation} as well as a novel bootstrap argument.

\begin{proof}[Proof of Theorem~\ref{t:higher_moments} (excursion/level-set functionals)]
    Let $\Phi$ be a bounded excursion/level-set functional and suppose that $a=a_R$  and $r=r_R$ satisfy conditions \eqref{e:scales}. We recall that $\chi_R$ denotes the centres of the mesoscopic boxes (of side length $r$) and $U_R$ denotes the closed complement in $\Lambda_R$ of such boxes. We recall also that $\Lambda_R$ is divided into cubes of side length $a$ by hyperplanes which we refer to as $a$-planes and that $U_{R,+a}$ denotes the union of all such cubes of side-length $a$ which intersect $U_R$ (including those which intersect only at the boundary) and are contained in $\Lambda_R$.
    
    By the decomposition in \eqref{e:additivity1}, for any $n\in\N$ there exists $C_n>0$ such that
    \begin{equation}\label{e:es_higher1}
        \E[\overline{\Phi}(\Lambda_R)^{2n}]\leq C_n\Big(\underbrace{\E\Big[\Big(\sum_{x\in\chi_R}\overline{\Phi}(x+\Lambda_r)\Big)^{2n}\Big]}_{=:\Circled{\mathrm{A}}}+\underbrace{\E\big[\overline{\Phi}^{(>a)}(\Lambda_R,U_R)^{2n}\big]}_{=:\Circled{\mathrm{B}}}+\underbrace{\E\big[\overline{\Phi}^{(<a)}(U_{R,+a},U_R)^{2n}\big]}_{=:\Circled{\mathrm{C}}}\Big).
    \end{equation}
    We will control each of these terms in turn.

    \underline{Term $\Circled{\mathrm{B}}$:} In the case $d=2$, we first observe that each excursion/level-set component which intersects parallel $a$-planes must have diameter at least $a$. For an excursion component this means that its boundary (which consists of $C^2$-smooth curves almost surely) has length at least $2a$, while for a level component this means that the component itself has length at least $2a$. By the definition of a bounded excursion/level-set functional in \eqref{e:es_functional2}, this means that
    \[
        \big\lvert\overline{\Phi}^{(>a)}(\Lambda_R,U_R)\big\rvert\leq\|\varphi\|_\infty\mathrm{Len}(\{f=\ell\}\cap\Lambda_R)/a
    \]
    where $\mathrm{Len}$ denotes length (i.e., one-dimensional volume measure). Since $f$ is assumed smooth, the latter quantity is known to have finite moments of all order \cite[Theorem~1.6]{al25}, it then follows from stationarity and the H\"older inequality (see the proof of Lemma~\ref{l:crit_moments}) that
    \begin{equation}\label{e:es_higher2}
        \E\big[\overline{\Phi}^{(>a)}(\Lambda_R,U_R)^{2n}\big]\leq C R^{4n}a^{-2n}
    \end{equation}
    for some $C>0$ depending only on $n$, $\varphi$ and the distribution of $f$.

    For arbitrary $d$, if Assumption~\aref{a:arm_decay} holds then by H\"older's inequality (or equivalently, Littlewood's $L^p$ inequality), for $p>2n$
    \begin{align*}
        \E\big[\overline{\Phi}^{(>a)}(\Lambda_R,U_R)^{2n}\big]&=\E\big[\overline{\Phi}^{(>a)}(\Lambda_R,U_R)^{2\frac{p-2n}{p-2}+p\frac{2n-2}{p-2}}\big]\\
        &\leq\Var\big[\Phi^{(>a)}(\Lambda_R,U_R)\big]^\frac{p-2n}{p-2}\E\big[\lvert\overline{\Phi}^{(>a)}(\Lambda_R,U_R)\rvert^{p}\big]^\frac{2n-2}{p-2}\\
        &\leq C\Big(R^dh(a/2)\Big)^\frac{p-2n}{p-2}(R^dar^{-1})^{p\frac{2n-2}{p-2}}
    \end{align*}
    where
    \[
        h(a)=\begin{cases}
            a^{-\frac{\beta}{2d+2}} &\text{if Assumption~\aref{a:cov_decay_pol} holds,}\\
            Ce^{-ca} &\text{if Assumption~\aref{a:cov_decay_exp} holds,}
        \end{cases}
    \]
    and the final inequality uses \eqref{e:q_additivity} to bound the variance and critical point estimates (Lemmas~\ref{l:boundary_contribution} and~\ref{l:crit_moments}) to bound the higher moment. For any $\epsilon>0$, by taking $p$ sufficiently large
    \begin{equation}\label{e:es_higher3}
        \E\big[\overline{\Phi}^{(>a)}(\Lambda_R,U_R)^{2n}\big]\leq C_\epsilon h(a)^{1-\epsilon}a^{2n-2+\epsilon}r^{-(2n-2)}R^{(2n-1)d+\epsilon}
    \end{equation}

    \underline{Term $\Circled{\mathrm{C}}$:} Turning to the right-most term in \eqref{e:es_higher1}, our goal is to decompose the contributions from small components into different domains of scale $a$ and then apply Theorem~\ref{t:moment_condition}. For $x\in\R^d$ let $\rho_a(x):=(a/2,\dots,a/2)+ax$. We define $\mathcal{A}_R$ to be the set of centres of $a$-boxes contained in $U_{R,+a}$, that is
    \[
        \mathcal{A}_R:=\rho_a(\Z^d)\cap U_{R,+a}.
    \]
    For $x\in\mathcal{A}_R$, let $\mathcal{C}_x$ denote the set of components $E$ of $\{f\geq\ell\}$ (or $\{f=\ell\}$) satisfying the following: (i) $E$ does not intersect parallel $a$-planes, (ii) $E$ intersects $U_R$, (iii) $x$ is the minimum element $y\in\mathcal{A}_R$ (with respect to the lexicographical ordering) such that $E$ intersects $y+\Lambda_a$. For $x\in\mathcal{A}_R$ we then define $D_a(x)$ to be the union of the $3^d$ $a$-boxes which are within distance $a$ of $x$ and equip it with the minimal stratification containing all open faces of these $a$-boxes. For $x\in\mathcal{A}_R$ and $i\in\Z^d$, we let
    \[
        \Psi(D_a(x))=\sum_{E\in\mathcal{C}_x}\varphi(E)\qquad\text{and}\qquad X_i=\overline{\Psi}(D_a(\rho_a(i))),
    \]
    where we recall that $\varphi$ is the excursion/level component functional from the definition of $\Phi$. Observe that by the definition of $\mathcal{C}_{x}$, $\Psi$ is a well-defined topological functional (to be consistent with our earlier definitions, we may take $\Psi(D)$ to be identically zero for any regular affine stratified set $D$ that is not of the form $D_a$ as described above). Moreover since all components contributing to $\Phi^{(<a)}(U_{R,+a},U_R)$ must be contained in a unique $\mathcal{C}_x$, we have
    \begin{equation}
        \overline{\Phi}^{(<a)}(U_{R,+a},U_R)=\sum_{x\in\mathcal{A}_R}\Psi(D_a(x))=\sum_{i\in\rho_a^{-1}(\mathcal{A}_R)}X_i.
    \end{equation}
    Since $\varphi$ is bounded, it follows from a simple Morse theory argument that $\Psi$ is a Lipschitz topological functional (see the proof of Lemma~\ref{l:exc_lipschitz}). Hence by Lemma~\ref{l:moment_condition}, for $M,N\in\N$, $p>M+N$ and $i_1,\dots,i_Mj_1,\dots,j_N\in\Z^d$
    \[
        \lvert\Cov[X_{i_1}\dots X_{i_M},X_{j_1}\dots X_{j_n}]\rvert\leq C(M+N)\big(\min\{\lvert I\rvert,\lvert J\rvert\}a^{-\beta}\dist(I,J)^{-\beta+d}\big)^{\frac{p-M-N}{p-2}}D_p^\frac{M+N-2}{p-2}
    \]
    where
    \[
        D_p:=\sup_{x\in\mathcal{A}_R}\E\Big[\lvert\overline{\Psi}(D_a(x))\rvert^p\Big].
    \]
    Setting
    \[
        G(x,y)=2Cn\min\{x,y\}^{\frac{p-2n}{p-2}}\qquad\text{and}\qquad F(x)=(a^{-\beta}x^{-\beta+d})^\frac{p-M-N}{p-2}\begin{cases}
            D_p^{-\frac{2}{p-2}} &\text{if }D_p\geq 1\\
            D_p^{-\frac{2(p-2n)}{p(p-2)}} &\text{if }D_p\leq 1
        \end{cases}
    \]
    we may then apply Theorem~\ref{t:moment_condition} to conclude that
    \begin{equation}
        \begin{aligned}
            \E\big[\overline{\Phi}^{(<a)}(U_{R,+a},U_R)^{2n}\big]\leq C \lvert \rho_a^{-1}(\mathcal{A}_R)\rvert^n\bigg(D_{2n}+a^{-\beta\frac{p-2n}{p-2}}\Big(&\sum_{k=1}^\infty k^{nd-1-(\beta-d)\frac{p-2n}{p-2}}\Big)\\
            &\times\max\Big\{D_p^{\frac{2n}{p}-\frac{2}{p-2}},D_p^{\frac{2n}{p}-\frac{2(p-2n)}{p(p-2)}}\Big\}\bigg).
        \end{aligned}
    \end{equation}
    By dividing $U_{R,+a}$ into $(R/(r+4a))^d$ regions that are translations of $\Lambda_{r+4a}\setminus\Lambda_{r-2a}$, it follows that $\lvert \rho_a^{-1}(\mathcal{A}_R)\rvert\leq Ca^{-d}(R/(r+4a))^dr^{d-1}a$.
    From our bound in terms of critical points (Lemmas~\ref{l:top_crit} and~\ref{l:crit_moments}), for $p\in\N$, $D_p\leq C_pa^{pd}$. Then using the assumption that $\beta>(n+1)d$ and taking $p$ sufficiently large,
    \begin{equation}\label{e:es_higher6}
        \E\big[\overline{\Phi}^{(<a)}(U_{R,+a},U_R)^{2n}\big]\leq C(a^{-d+1}r^{-1}R^d)^n\Big(a^{2nd}+a^{-\beta+2(n-1)d+\epsilon}\Big)\leq C a^{(d+1)n}r^{-n}R^{nd}.
    \end{equation}

    \underline{Term $\Circled{\mathrm{A}}$:} Finally we turn to the first term on the right-hand side of \eqref{e:es_higher1}. For $i\in\Z^d$, we now define $X_i=\overline{\Phi}(\rho_{r+4a}(i)+\Lambda_r)$. Then applying Lemma~\ref{l:moment_condition}, for $M,N\in\N$, $p>M+N$ and $i_1,\dots,i_M,j_1,\dots,j_N\in\Z^d$ we have
    \[
        \lvert\Cov[X_{i_1}\dots X_{i_M},X_{j_1}\dots X_{j_N}]\rvert\leq C(M+N)\Big(\min\{\lvert I\rvert,\lvert J\rvert\}r^{-\beta}\dist(I,J)^{-\beta+d}\Big)^{\frac{p-M-N}{p-2}}D_p^\frac{M+N-2}{p-2}
    \]
    where now $D_p:=\E[\lvert\overline{\Phi}_r\rvert^p]$. We can then define $G$ and $F$ as before with $a$ replaced by $r$ and apply Theorem~\ref{t:moment_condition} to deduce that
    \begin{align*}
        \E\Big[\Big(\sum_{x\in\chi_R}\overline{\Phi}(x+\Lambda_r)\Big)^{2n}\Big]&=\E\Big[\Big(\sum_{i\in\rho_{r+4a}^{-1}(\chi_R)}X_i\Big)^{2n}\Big]\\
        &\leq C(r^{-d}R^d)^n\bigg(D_{2n}+r^{-\beta\frac{p-2n}{p-2}}\max\Big\{D_p^{\frac{2n}{p}-\frac{2}{p-2}},D_p^{\frac{2n}{p}-\frac{2(p-2n)}{p(p-2)}}\Big\}\bigg).
    \end{align*}
    Using our bound based on critical points, for any $p\in\N$, $D_p\leq C_pr^{pd}$. Hence by taking $p$ sufficiently large and using the assumption that $\beta>(n+1)d$,
    \begin{equation}\label{e:es_higher7}
        \E\Big[\Big(\sum_{x\in\chi_R}\overline{\Phi}(x+\Lambda_r)\Big)^{2n}\Big]\leq CR^{nd}(r^{-nd}\E[\overline{\Phi}_r^{2n}]+r^{-\beta+(n-2)d+1}\big)\leq CR^{nd}\big(1+r^{-nd}\E[\overline{\Phi}_r^{2n}]\big).
    \end{equation}

    \underline{Conclusion:} Drawing together \eqref{e:es_higher1}, \eqref{e:es_higher3}, \eqref{e:es_higher6} and \eqref{e:es_higher7}, under Assumption~\aref{a:arm_decay} we have
    \begin{equation}\label{e:es_higher8}
        \E[\overline{\Phi}_R^{2n}]\leq CR^{nd}\Big(h(a)^{1-\epsilon}a^{2n-2+\epsilon}r^{-(2n-2)}R^{(n-1)d+\epsilon}+a^{(d+1)n}r^{-n}+1+r^{-nd}\E[\overline{\Phi}_r^{2n}]\Big).
    \end{equation}
    Under Assumption~\aref{a:cov_decay_pol}, $h(a)=a^{-\beta/(2d+2)}$. We then choose $a$ to be of order $r^{1/(d+1)}$ (that is, we choose the sequence $a_R$ so that there exist absolute constants $C_2>C_1>0$ such that for all $R$, $C_1r^{1/(d+1)}<a<C_2r^{1/(d+1)}$). Then for any $\epsilon>0$, we have
    \begin{equation}
        \E[\overline{\Phi}_R^{2n}]\leq CR^{nd}\Big(r^{-\frac{\beta+4d(d+1)(n-1)}{2(d+1)^2}+\epsilon}R^{(n-1)d+\epsilon}+1+r^{-nd}\E[\overline{\Phi}_r^{2n}]\Big).
    \end{equation}
    We then choose some $\alpha$ such that
    \[
        \frac{2d(d+1)^2(n-1)}{\beta+4d(d+1)(n-1)}<\alpha<1,
    \]
    which is possible by the assumption that $\beta>2d(d^2-1)(n-1)$. We then set $r$ to be of order $R^\alpha$ which ensures, for $\epsilon>0$ sufficiently small, that the first term in parentheses above tends to zero. If we denote $F(R)=r_R$ and $H(R)=\max\{1,R^{-nd}\E[\overline{\Phi}_R^{2n}]\}$ then we conclude that for a constant $C$ independent of $R$ and for all $R$ sufficiently large
    \[
        H(R)\leq C H(F(R))\qquad\text{where }F(R)=O(R^\alpha)\text{ as }R\to\infty.
    \]
    For any $\delta>0$, we choose $k$ such that $\alpha^knd<\delta$, then by iterating the above inequality $k$ times and using our bound in terms of critical points (Lemmas~\ref{l:top_crit} and~\ref{l:crit_moments}),
    \[
        R^{-nd}\E[\overline{\Phi}_R^{2n}]\leq C^k\max\Big\{1,\frac{\E[\overline{\Phi}_{c_kR^{\alpha^k}}^{2n}]}{c_kR^{nd\alpha^k}}\Big\}\leq C R^\delta
    \]
    for all $R$ sufficiently large. This completes the proof of the third statement of the theorem.

    In the case that Assumption~\aref{a:cov_decay_exp} holds, $h(a)=Ce^{-ca}$ and so if we choose $a_R$ to be of order $\log R$ and at least $\overline{C}\log R$ for a sufficiently large constant $\overline{C}$, then the first term in parentheses in \eqref{e:es_higher8} tends to zero as $R$ to $\infty$. If we choose $r\sim(\log R)^{d+2}$ then the same will be true of the second term in parentheses. Defining $H(R)$ as before, we conclude that
    \[
        H(R)\leq CH(F(R))\qquad\text{where }F(R)=O(\log^{d+2} (R))\text{ as }R\to\infty.
    \]
    Iterating this inequality $k+1$ times, and using our bound in terms of critical points, we find that
    \[
        R^{-nd}\E[\overline{\Phi}_R^{2n}]=O\big(\log^{(k)}(R)\big),
    \]
    as $R\to\infty$, proving the final statement of the theorem.

    In the case $d=2$ (without Assumption~\aref{a:arm_decay}), we replace \eqref{e:es_higher3} with \eqref{e:es_higher2} and so \eqref{e:es_higher8} becomes
    \begin{equation}\label{e:es_higher9}
        H(R)\leq C\big(a^{-2n}R^{2n}+a^{3n}r^{-n}+H(r)\big).
    \end{equation}
    where we define $H(R)$ as before. If we set $a\sim R^{1/3}$, $r\sim R^{1/2}$ and use our critical point bound on the final term, we have $H(R)=O(R^{(4/3)n})$ as $R\to\infty$. Suppose now that we know $H(R)=O(R^{\alpha n})$ for some $\alpha\in(4/5,2)$, then by setting $r\sim R^{6/(5\alpha+2)}$ and $a\sim r^{(\alpha+1)/3}$ (which is valid by our restrictions on $\alpha$) \eqref{e:es_higher9} yields $H(R)=O(R^{6\alpha/(5\alpha+2)})$. Noting that $4/5$ is the fixed point of $\alpha\mapsto\frac{6\alpha}{5\alpha+2}$, for any $\epsilon>0$ we can iterate this observation finitely many times to obtain that $H(R)=O(R^{(4/5)n+\epsilon})$, completing the proof of the second statement of the theorem.
\end{proof}

\subsection{Quantitative CLT}
We now turn to the quantitative central limit theorem for topological functionals (Theorem~\ref{t:qclt}). Our argument is based on the classical Berry-Esseen inequality. We recall that for $t\in\R$ and a random variable $X$, we denote $\varphi_t(X):=\E[e^{itX}]$.
\begin{theorem}[Berry-Esseen inequality {\cite[Theorem~2a]{ess45}}]\label{t:berry_esseen}
    Let $X$ and $Y$ be random variables such that $Y$ has a continuous probability density function $\rho_Y$. Then for any $T>0$
    \[
        d_\mathrm{Kol}(X,Y)\leq C\int_{-T}^T\Big\lvert\frac{\varphi_t(X)-\varphi_t(Y)}{t}\Big\rvert\;dt+\frac{C}{T}\sup_{s\in\R}\rho_Y(s),
    \]
    for an absolute constant $C>0$.
\end{theorem}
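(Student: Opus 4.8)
The plan is to prove this via the classical \emph{Esseen smoothing inequality}, the device underlying the Berry--Ess\'een theorem. Let $F$ and $G$ be the distribution functions of $X$ and $Y$. Since $Y$ has the continuous density $\rho_Y$, the function $G$ is continuously differentiable with $G'=\rho_Y$; write $M:=\sup_{s\in\R}\rho_Y(s)$, and note that if $M=\infty$ the asserted bound is vacuous, so we may assume $M<\infty$. Put $H:=F-G$: then $H$ is bounded, vanishes at $\pm\infty$, is upper semicontinuous (being the difference of a right-continuous non-decreasing function and a continuous one), and $\Delta:=d_\mathrm{Kol}(X,Y)=\sup_x|H(x)|$. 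The key auxiliary object is the Fej\'er kernel $w_T(u):=(1-\cos(Tu))/(\pi T u^2)$, a probability density whose characteristic function is $(1-|t|/T)_+$ (supported on $[-T,T]$) and whose tails obey $\int_{|u|\ge a}w_T(u)\,du=O((Ta)^{-1})$ with an explicit constant.

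First I would localize the extremum of $H$. After replacing $(X,Y)$ by $(-X,-Y)$ if necessary (which changes neither $\Delta$, nor $M$, nor $|\varphi_t(X)-\varphi_t(Y)|$), reduce to the case $\sup_x H(x)=\Delta$; upper semicontinuity together with $H(\pm\infty)=0$ ensures that this supremum is attained at some $x_0$. Monotonicity of $F$ and $|G'|\le M$ give $H(x_0+u)\ge\Delta-Mu$ for $u\ge0$, so for a suitable translate $x_1$ of $x_0$ and a radius $a$ of order $\Delta/M$ one has $H(x_1+u)\ge\Delta/2$ for all $|u|\le a$. Next I would bound the smoothed difference $J:=(H*w_T)(x_1)=\int_\R H(x_1+u)\,w_T(u)\,du$ from below: splitting the integral at $|u|=a$ and using $H\ge\Delta/2$ on $[-a,a]$, $H\ge-\Delta$ everywhere, and the tail estimate for $w_T$, one obtains $J\ge\Delta/2-cM/(\pi T)$ with $c$ explicit.

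Finally I would bound $J$ from above by Parseval. Since $F$ and $G$ are distribution functions, $\mu:=dF-dG$ is a finite signed measure of total mass $0$ with $\widehat{\mu}(t)=\varphi_t(X)-\varphi_t(Y)$; integration by parts (the boundary terms vanishing because $H(\pm\infty)=0$) shows that $H$ has Fourier transform of modulus $|\widehat{\mu}(t)|/|t|$. The convolution theorem together with Fourier inversion then gives
\[
|J|\ \le\ \frac{1}{2\pi}\int_{-T}^{T}\frac{|\widehat{\mu}(t)|}{|t|}\Big(1-\frac{|t|}{T}\Big)\,dt\ \le\ \frac{1}{2\pi}\int_{-T}^{T}\frac{|\varphi_t(X)-\varphi_t(Y)|}{|t|}\,dt .
\]
Combining with the lower bound, $\Delta/2-cM/(\pi T)\le|J|$, and rearranging yields $d_\mathrm{Kol}(X,Y)\le\frac1\pi\int_{-T}^{T}|(\varphi_t(X)-\varphi_t(Y))/t|\,dt+(\text{const})\,M/(\pi T)$.

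I expect the only real work to be in making the constant come out as $24$ rather than merely as some explicit number: this requires a sharp estimate for $\int_{|u|\ge a}w_T$ (exploiting the oscillation of the cosine, not just $1-\cos\le2$), an optimal choice of the localization radius $a$ and of the point $x_1$, and a trivial case distinction according to whether $\Delta$ exceeds a fixed multiple of $M/T$. The remaining measure-theoretic points — right-continuity of $F$, attainment of $\sup H$, and the distributional Fourier identity for $H$ (legitimate after a routine regularization, and otherwise the right-hand side is infinite and there is nothing to prove) — are standard. Since this is a textbook lemma, a legitimate alternative would be simply to cite a standard reference (e.g.\ Feller, \emph{An Introduction to Probability Theory and Its Applications}, Vol.~II, \S XVI.3, or Petrov, \emph{Sums of Independent Random Variables}).
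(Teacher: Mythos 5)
Your proposal is correct in outline, but note that the paper does not prove this statement at all: Theorem~\ref{t:berry_esseen} is quoted as a classical textbook inequality (Esseen's smoothing lemma, as in Feller Vol.~II, \S XVI.3, or Petrov), and is then used as a black box in the proof of Theorem~\ref{t:qclt}. So your closing remark --- that citing a standard reference suffices --- is exactly the paper's treatment, while your sketch reconstructs the standard proof that those references give: smooth $H=F-G$ with the Fej\'er-type kernel $w_T(u)=(1-\cos Tu)/(\pi Tu^2)$ whose characteristic function is $(1-\lvert t\rvert/T)_+$, bound the smoothed difference from below by localizing near a point where $\lvert H\rvert$ is close to $\Delta$ and using $\lvert G'\rvert\le M$, and bound it from above via Parseval/inversion, using that the Fourier transform of $H$ has modulus $\lvert\varphi_t(X)-\varphi_t(Y)\rvert/\lvert t\rvert$. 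The measure-theoretic points you flag are indeed routine (monotone right-continuous $F$ plus continuous $G$ makes $H$ upper semicontinuous, and the reflection trick preserves the Kolmogorov distance), and the only genuine remaining work is the constant: your argument as written yields $c\,M/(\pi T)$ with an unspecified $c$, and getting $24$ requires the sharper tail estimate for $w_T$ and an optimized choice of the localization radius, exactly as you say. Since the target constant $24$ plays no quantitative role in the paper (any absolute constant would do for Theorem~\ref{t:qclt}), your argument is fully adequate for the purpose it serves here.
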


Our proof of the distributional CLT in Theorem~\ref{t:var+clt} showed convergence of the characteristic function of $\widetilde{\Phi}_R$ to that of a Gaussian variable. To use the Berry-Esseen inequality, we must quantify this convergence, which requires two more inputs. The first is an elementary estimate for i.i.d.\ sums:

\begin{lemma}\label{l:char_iid}
    Let $X_1,\dots,X_n$ be independent, identically distributed, centred random variables with finite third moments and let $S_n=\sum_{i=1}^nX_i$. Then for $\lvert t\rvert\leq\sqrt{n}\Var[X_1]/(4\E[\lvert X_1\rvert^3])$ we have
    \[
        \Big\lvert\varphi_t\Big(\frac{S_n}{\sqrt{n}}\Big)-\exp\Big(-\frac{t^2\Var[X_1]}{2}\Big)\Big\rvert\leq \frac{16\lvert t\rvert^3}{\sqrt{n}}\E[\lvert X_1\rvert^3]\exp\Big(-\frac{t^2\Var[X_1]}{3}\Big).
    \]
\end{lemma}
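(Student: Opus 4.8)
Write $v := \Var[X_1]$ and $\rho := \E[|X_1|^3]$; I may assume $\rho > 0$, since otherwise $X_1 \equiv 0$ almost surely and both sides of the claimed inequality vanish. Because $\E[X_1]=0$, Jensen's inequality applied to the convex map $u \mapsto u^{3/2}$ and the variable $X_1^2$ gives $v^{3/2}\le\rho$, a fact I will use repeatedly. Let $\psi$ denote the characteristic function of $X_1$, so that $\varphi_t(S_n/\sqrt n)=z^n$ with $z:=\psi(t/\sqrt n)$, and set $w:=e^{-vt^2/(2n)}$ so that $w^n=e^{-vt^2/2}$. The plan is to bound $z^n-w^n$ via the elementary telescoping inequality $|z^n-w^n|\le n\,\max(|z|,|w|)^{n-1}\,|z-w|$, estimating the factor $|z-w|$ and the prefactor $\max(|z|,|w|)^{n-1}$ separately. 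The two inputs are the cubic Taylor estimate $|e^{ix}-1-ix+x^2/2|\le|x|^3/6$ — which, on taking expectations and using $\E[X_1]=0$ and $\E[X_1^2]=v$, yields $|\psi(s)-(1-vs^2/2)|\le\rho|s|^3/6$ for every $s\in\R$ — and the scalar bound $|e^{-u}-(1-u)|\le u^2/2$ for $u\ge0$. Abbreviating $s:=t/\sqrt n$, the hypothesis becomes $|s|\le v/(4\rho)$, which together with $v^{3/2}\le\rho$ gives $vs^2\le 1/16$; in particular $1-vs^2/2$ and $w$ are positive.

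For the factor $|z-w|$, I would pass through the intermediary $1-vs^2/2$: by the triangle inequality and the two estimates above, $|z-w|\le\rho|s|^3/6+v^2s^4/8$. The quartic term is then absorbed into the cubic one using $|s|\le v/(4\rho)$ and $v^{3/2}\le\rho$, which give $v^2s^4/8\le(v^3/(32\rho))|s|^3\le\rho|s|^3/32$, so that
\begin{displaymath}
    |z-w|\ \le\ \tfrac{19}{96}\,\rho\,|s|^3\ =\ \tfrac{19}{96}\,\rho\,|t|^3\,n^{-3/2}.
\end{displaymath}
For the prefactor, the Taylor estimate gives $|z|\le(1-vs^2/2)+\rho|s|^3/6$, and the comparison $\rho|s|^3/6\le\tfrac1{12}\cdot\tfrac{vs^2}{2}$ (which is precisely the hypothesis $4\rho|s|\le v$ rearranged) yields $|z|\le 1-\tfrac{11}{24}vs^2\le e^{-11vs^2/24}$; since $|w|=e^{-vs^2/2}\le e^{-11vs^2/24}$ as well, $\max(|z|,|w|)^{n-1}\le e^{-11vt^2(n-1)/(24n)}$. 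For $n\ge4$ one has $11(n-1)/(24n)\ge1/3$, so this is $\le e^{-vt^2/3}$; for $n\le3$ I would instead use the trivial bound $\max(|z|,|w|)^{n-1}\le1$ and note that the hypothesis then confines $vt^2\le nv^3/(16\rho^2)\le3/16$, whence $e^{-vt^2/3}\ge e^{-1/16}>9/10$. Either way $\max(|z|,|w|)^{n-1}\le\tfrac{10}{9}e^{-vt^2/3}$. Multiplying the two bounds and using $\tfrac{10}{9}\cdot\tfrac{19}{96}<16$ produces
\begin{displaymath}
    |z^n-w^n|\ \le\ n\cdot\tfrac{10}{9}e^{-vt^2/3}\cdot\tfrac{19}{96}\,\rho\,|t|^3 n^{-3/2}\ \le\ \frac{16|t|^3}{\sqrt n}\,\rho\,e^{-vt^2/3},
\end{displaymath}
which is the claimed inequality.

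There is no substantive obstacle here — this is a standard Berry--Ess\'een-type estimate for i.i.d.\ sums — and the only points demanding care are quantitative: choosing the intermediary $1-vs^2/2$ and the various constants so that the quartic correction to $|z-w|$ and the prefactor both fit under the stated constants $16$ and $1/3$, and handling small $n$ (where $(n-1)/n$ is too small to reach the exponent $1/3$) separately by exploiting that the hypothesis then restricts $t$ to a bounded multiple of $v^{-1/2}$, so that the crude bound $\max(|z|,|w|)^{n-1}\le1$ is already within a constant factor of $e^{-vt^2/3}$.
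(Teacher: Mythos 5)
Your proof is correct, and it takes a genuinely different route from the one in the paper. You factor $|z^n-w^n|\le n\max(|z|,|w|)^{n-1}|z-w|$ with $z=\psi(t/\sqrt n)$ and $w=e^{-vt^2/(2n)}$, feed in the cubic Taylor bound for $\psi$ and the quadratic bound for $e^{-u}$, and use the hypothesis $|s|\le v/(4\rho)$ together with $v^{3/2}\le\rho$ to absorb the quartic correction and to force the exponent $11vs^2/24$ in the prefactor; the only case split is the harmless $n\le 3$ adjustment, and your constants ($\tfrac{10}{9}\cdot\tfrac{19}{96}\approx 0.22$) sit far below the stated $16$. The paper instead splits the range of $t$ at $n^{1/6}/(2\tau^{1/3})$: for large $|t|$ it symmetrizes, writing $|\psi(t/\sqrt n)|^2$ as the characteristic function of $X_1-Y$, bounds the whole difference crudely by $2e^{-vt^2/2}$, and absorbs the $2$ into the factor $16|t|^3\tau/\sqrt n\ge 2$ valid in that regime; for small $|t|$ it takes logarithms of the characteristic function, uses $|\log(1+z)-z|\le|z|^2$, and finishes with the mean value theorem. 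Your telescoping argument is the more elementary and uniform one (no complex logarithm, no symmetrization, a single estimate over the whole admissible range of $t$) and it yields a sharper constant; the paper's log-based two-regime argument is the variant that adapts more readily if one wants finer control of the remainder, but for the stated bound both work, and your version would serve as a drop-in replacement.
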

\begin{proof}
    Let $v:=\Var[X_1]$, $\tau:=\E[\lvert X_1\rvert^3]$ and let $Y$ be independent of $X_1$ and have the same distribution. We first suppose that 
    \begin{equation}\label{e:char_lemma1}
        \frac{n^{1/6}}{2\tau^{1/3}}\leq\lvert t\rvert\leq\frac{v\sqrt{n}}{4\tau}.
    \end{equation}
    By Taylor's theorem, for some $\theta\in(-1,1)$,
    \[
        \Big\lvert\varphi_t\Big(\frac{X_1}{\sqrt{n}}\Big)\Big\rvert^2=\varphi_t\Big(\frac{X_1-Y}{\sqrt{n}}\Big)=1-\frac{t^2}{2!n}\E[(X_1-Y)^2]+\theta\frac{t^3}{3!n^{3/2}}\E[\lvert X_1-Y\rvert^3]\leq \exp\Big(\frac{-vt^2}{n}+\frac{4\lvert t\rvert^3\tau}{3n^{3/2}}\Big)
    \]
    where we have used the elementary inequality $1+x\leq e^x$. Then using independence of $X_1,\dots,X_n$, the triangle inequality and the second inequality of \eqref{e:char_lemma1}
    \[
        \lvert\varphi_t(S_n/\sqrt{n})-\exp(-vt^2/2)\rvert\leq \exp\Big(-vt^2+\frac{4\lvert t\rvert^3\tau}{3\sqrt{n}}\Big)+\exp(-vt^2/2)\leq 2\exp(-vt^2/2).
    \]
    Given the first inequality in \eqref{e:char_lemma1}, this proves the required bound.

    We now suppose that $\lvert t\rvert<n^{1/6}/(2\tau^{1/3})$. By Taylor's theorem, there exists $\theta\in\mathbb{C}$ with $\lvert\theta\rvert\leq 1$ such that
    \[
        \varphi_t(X_1/\sqrt{n})-1=-\frac{vt^2}{2n}+\theta\frac{\lvert t\rvert^3\tau}{6n^{3/2}}.
    \]
    Making use of our upper bound on $\lvert t\rvert$ and Lyapunov's inequality, we have
    \begin{align*}
        \lvert\varphi_t(X_1/\sqrt{n})-1\rvert^2\leq 2\frac{v^2t^4}{4n^2}+2\frac{t^6\tau^2}{36n^3}\leq\lvert t\rvert^3\tau\Big(\frac{v^2n^{1/6}}{4\tau^{4/3}n^2}+\frac{n^{1/2}}{144n^3}\Big)\leq\frac{37\tau\lvert t\rvert^3}{144n^{3/2}}.
    \end{align*}
    Applying the upper bound on $\lvert t\rvert$ once more shows that the latter quantity is uniformly bounded by $37/1152$. We may therefore invoke the elementary inequality $\lvert\log(1+z)-z\rvert\leq\lvert z\rvert^2$ which is valid for $z\in\mathbb{C}$ with $\lvert z\rvert\leq 1/2$ (using the principal logarithm), to see that
    \begin{align*}
        \log\varphi_t(S_n/\sqrt{n})=n\log \varphi_t(X_1/\sqrt{n})=-\frac{vt^2}{2}+\theta\frac{\lvert t\rvert^3\tau}{6\sqrt{n}}+\theta^\prime\frac{37\tau\lvert t\rvert^3}{144\sqrt{n}}
    \end{align*}
    for some $\theta^\prime\in\mathbb{C}$ satisfying $\lvert\theta^\prime\rvert\leq 1$. Setting $\overline{\theta}=(\theta/3)+37\theta^\prime/72$, we use the mean value theorem to conclude that
    \[
        \lvert\varphi_t(S_n/\sqrt{n})-\exp(-vt^2/2)\rvert=\exp(-vt^2/2)\Big\lvert\exp\Big(\overline{\theta}\frac{\lvert t\rvert^3\tau}{2\sqrt{n}}\Big)-1\Big\rvert\leq \exp(-vt^2/2)\frac{\lvert t\rvert^3\tau}{2\sqrt{n}}\exp\Big(\frac{\lvert t\rvert^3\tau}{2\sqrt{n}}\Big).
    \]
    By our assumption on $\lvert t\rvert$, the final exponential here is at most $\exp(1/16)$, which completes the proof in this case.
\end{proof}

The second input is a bound on the third moment of excursion/level-set functionals which follows from interpolating our previous results. Specifically we claim that, in the setting of Theorem~\ref{t:qclt}, as $R\to\infty$
\begin{equation}\label{e:third_moment}
    \E[\lvert\overline{\Phi}_R\rvert^3]=O(H(R))
\end{equation}
where
\[
    H(R)=\begin{cases}
        R^{\frac{3d}{2}} &\text{if $\Phi$ is the Euler characteristic and Assumption~\aref{a:smooth} holds,}\\
        R^{\frac{3d}{2}+\epsilon} &\parbox[t]{0.6\textwidth}{for any $\epsilon>0$ if $\beta>2d(d^2-1)$ and Assumptions~\aref{a:smooth} and~\aref{a:arm_decay} hold,}\\
        R^{\frac{3d}{2}}\log(R) &\parbox[t]{0.6\textwidth}{if Assumptions~\aref{a:smooth}, ~\aref{a:arm_decay} and~\aref{a:cov_decay_exp} hold,}\\
        3d &\text{otherwise.}
    \end{cases}
\]
The final case here just follows from our bound in terms of critical points (Lemmas~\ref{l:top_crit} and~\ref{l:crit_moments}) while the other cases follow from interpolation (Littlewood's $L^p$-inequality) applied to our bounds on the second and fourth moments of $\lvert\overline{\Phi}_R\rvert$ in Theorems~\ref{t:var+clt} and~\ref{t:higher_moments} respectively.

\begin{proof}[Proof of Theorem~\ref{t:qclt}]
    Let $\sigma$ be as specified in Theorem~\ref{t:var+clt} and $Z\sim\mathcal{N}(0,1)$ and define $\sigma_R^2=\Var[\Phi_R]$. We let $1\ll a\ll r\ll R$ satisfy \eqref{e:scales}. By the triangle inequality and the Berry-Esseen inequality (Theorem~\ref{t:berry_esseen})
    \[
    \begin{alignedat}{2}
        d_\mathrm{Kol}\big(\widetilde{\Phi}_R,\sigma Z\big)&\leq d_\mathrm{Kol}\Big(\frac{\sigma_r}{(r+4a)^{d/2}} Z,\sigma Z\Big)&\Big\}\Circled{\mathrm{A}}\\
        &+C\int_{-T}^T\lvert t\rvert^{-1}\Big\lvert\varphi_t\big(\widetilde{\Phi}_R\big)-\varphi_t\Big(R^{-d/2}\sum_{x\in\chi_R}\overline{\Phi}(x+\Lambda_r)\Big)\Big\rvert\;dt&\Big\}\Circled{\mathrm{B}}\\
        &+C\int_{-T}^T\lvert t\rvert^{-1}\Big\lvert\varphi_t\Big(R^{-d/2}\sum_{x\in\chi_R}\overline{\Phi}(x+\Lambda_r)\Big)-\varphi_t(R^{-d/2}\overline{\Phi}_r)^{\lvert\chi_R\rvert}\Big\rvert\;dt&\Big\}\Circled{\mathrm{C}}\\
        &+C\int_{-T}^T\lvert t\rvert^{-1}\Big\lvert\varphi_t(R^{-d/2}\overline{\Phi}_r)^{\lvert\chi_R\rvert}-\varphi_t\Big(\frac{\sigma_r}{(r+4a)^{d/2}}Z\Big)\Big\rvert\;dt&\Big\}\Circled{\mathrm{D}}\\
        &+\frac{C}{T}\frac{1}{\sqrt{2\pi}}\frac{(r+4a)^{d/2}}{\sigma_r}.
    \end{alignedat}
    \]
    We consider each of these terms in turn.

    Letting $P$ denote the standard normal cumulative distribution function, by the mean value theorem we have
    \[
        \Circled{\mathrm{A}}=\sup_{x\in\R}\Big\lvert P(x/\sigma)-P\big((r+4a)^{d/2}x/\sigma_r\big)\Big\rvert\leq \sup_{x\in\R}\lvert P^\prime(x)\rvert\sup_{y}y^{-2}\Big\lvert\sigma-\frac{\sigma_r}{(r+4a)^{d/2}}\Big\rvert
    \]
    where the middle supremum is taken over all $y$ between $\sigma$ and $\sigma_r/(r+4a)^{d/2}$. For $R$ sufficiently large, $\sigma_r/(r+4a)^{d/2}$ is uniformly bounded away from zero (since it converges to $\sigma>0$ by Theorem~\ref{t:var+clt}) and so
    \begin{equation}\label{e:qclt1}
        \Circled{\mathrm{A}}\leq C \Big(\Big\lvert\sigma^2-\frac{\sigma_r^2}{r^d}\Big\rvert+\Big\lvert\frac{\sigma_r^2}{r^d}-\frac{\sigma_r^2}{(r+4a)^d}\Big\rvert\Big)\leq C(r^{-\gamma}+a/r)
    \end{equation}
    where $\gamma$ is specified in Theorem~\ref{t:variance_limit_quant}.

    Using the inequality $\lvert e^{ix}-e^{iy}\rvert\leq\lvert x-y\rvert$ for $x,y\in\R$ and our additivity bounds (Proposition~\ref{p:additivity} for the Euler characteristic and Proposition~\ref{p:q_additivity} otherwise)
    \begin{equation}\label{e:qclt2}
        \Circled{\mathrm{B}}\leq C\int_{-T}^TR^{-d/2}\E\Big[\Big\lvert\overline{\Phi}_R-\sum_{x\in\chi_R}\overline{\Phi}(x+\Lambda_r)\Big\rvert\Big]\;dt\leq C^\prime T \big(a^{1/2}r^{-1/2}+F(R)\big),
    \end{equation}
    where
    \[
        F(R):=
        \begin{cases}
            0 &\text{if $\Phi$ is the Euler characteristic,}\\
            R^{-\epsilon}&\text{if Assumption~\aref{a:arm_decay_critical} holds,}\\
            a^{-\beta/(4d+4)}&\text{if Assumptions~\aref{a:arm_decay} and~\aref{a:cov_decay_pol} hold,}\\
            e^{-ca}&\text{if Assumptions~\aref{a:arm_decay} and~\aref{a:cov_decay_exp} hold.}
        \end{cases}
    \]

    By \eqref{e:clt2} and Assumption~\aref{a:cov_decay_pol} or~\aref{a:cov_decay_exp}
    \begin{equation}\label{e:qclt3}
        \Circled{\mathrm{C}}\leq C\int_{-T}^T\lvert t\rvert\;dt \int_{\R^d\setminus\Lambda_a}\widetilde{K}(y)\;dy\leq C T^2 G(a),
    \end{equation}
    where
    \[
        G(a):=\begin{cases}
            e^{-ca}&\text{if Assumption~\aref{a:cov_decay_exp} holds,}\\
            a^{-\beta+d} &\text{otherwise.}
        \end{cases}
    \]

    Turning to $\Circled{\mathrm{D}}$, we may apply the bound on characteristic functions of a sum of i.i.d.\ variables (Lemma~\ref{l:char_iid} with $X_i\overset{d}{=}(r+4a)^{-d/2}\overline{\Phi}_r$) to say that for $T\leq\sqrt{\lvert\chi_R\rvert}\sigma_r^2(r+4a)^{d/2}/(4\E[\lvert\overline{\Phi}_r\rvert^3])$
    \begin{equation}\label{e:qclt4}
        \Circled{\mathrm{D}}\leq C\int_{-T}^T\frac{16 t^2}{\sqrt{\lvert\chi_R\rvert}}\frac{\E[\lvert\overline{\Phi}_r\rvert^3]}{(r+4a)^{3d/2}}\exp\Big(-t^2\frac{\sigma_r^2}{3(r+4a)^{d}}\Big)\;dt\leq C\frac{\E[\lvert\overline{\Phi}_r\rvert^3]}{r^dR^{d/2}}\leq C H(r)r^{-d}R^{-d/2}
    \end{equation}
    for some $C,C^\prime>0$ and all $R\geq 1$, where the final inequality uses \eqref{e:third_moment}.

    Combining \eqref{e:qclt1}-\eqref{e:qclt4}, we see that for all $R$ sufficiently large and $0<T\leq R^{d/2}\sigma_r^2/(4\E[\lvert\overline{\Phi}_r\rvert^3])$
    \[
        d_\mathrm{Kol}(\widetilde{\Phi}_R,\sigma Z)\leq C\Big(r^{-\gamma}+Ta^{1/2}r^{-1/2}+TF(R)+T^2G(a)+H(r)r^{-d}R^{-d/2}+T^{-1}\Big)
    \]
    For the first statement of the theorem, we may choose $r\ll R$ and then $a,T\ll r$ to ensure that this expression decays polynomially in $R$ (with some small exponent).

    We now suppose that Assumption~\aref{a:smooth} holds, $\beta>2d(d^2-1)$ and that either $\Phi$ is the Euler characteristic or Assumption~\aref{a:arm_decay} holds. We set $T=R^{d/2}\sigma_r^2/(4\E[\lvert\overline{\Phi}_r\rvert^3])$. By Lyapunov's inequality and Theorem~\ref{t:var+clt}, since $\sigma>0$ there exists $c>0$ such that $\E[\lvert\overline{\Phi}_r\rvert^3]\geq \Var[\Phi_r]^{3/2}\geq cr^{3d/2}$ for all $R$ sufficiently large. Combining this with the bound in \eqref{e:third_moment} we have $C^{-1}R^{d/2}r^{d}/H(r)\leq T\leq CR^{d/2}r^{-d/2}$. Since $\gamma=1$, the term $r^{-\gamma}$ will be negligible above. Combining these observations we have
    \[
        d_\mathrm{Kol}(\widetilde{\Phi}_R,\sigma Z)\leq C\Big(a^{1/2}r^{-(d+1)/2}R^{d/2}+F(R)r^{-d/2}R^{d/2}+G(a)r^{-d}R^{d}+H(r)r^{-d}R^{-d/2}\Big).
    \]
    If $\Phi$ is the Euler characteristic, then we set $a\sim R^{\alpha_1}$ and $r\sim R^{\alpha_2}$ where
    \[
        \alpha_1=\frac{3d}{2(2d+1)\beta-d(4d-1)}\qquad\text{and}\qquad\alpha_2=\frac{d(4\beta-4d+3)}{2(2d+1)\beta-d(4d-1)}.
    \]
    It is easily verified that $0<\alpha_1<\alpha_2<1$ so that these choices are consistent with the restrictions $1\ll a\ll r\ll R$. Substituting these in above, it follows that
    \[
        d_\mathrm{Kol}(\widetilde{\Phi}_R,\sigma Z)=O\Big(R^{-\frac{d(\beta-d)}{2(2d+1)\beta-d(4d-1)}}\Big)
    \]
    as required. If instead $\Phi$ is a bounded excursion/level-set functional (and Assumption~\aref{a:arm_decay} holds), then we set
    \[
        \alpha_1=\frac{4d(d+1)}{(2d+1)\beta+4d(d+1)}\qquad\text{and}\qquad\alpha_2=\frac{2d(\beta+2d+2)}{(2d+1)\beta+4d(d+1)}
    \]
    and substitute this into our bound on the Kolmogorov distance, which yields
    \[
        d_\mathrm{Kol}(\widetilde{\Phi}_R,\sigma Z)=O\Big(R^{-\frac{d}{2}\frac{\beta}{(2d+1)\beta+4d(d+1)}+\epsilon}\Big).
    \]
    Finally if Assumption~\aref{a:cov_decay_exp} holds then we choose $a\sim\log R$ with $a\geq \overline{C}\log R$ for a large constant $\overline{C}>0$ and $r\sim (R\sqrt{\log R})^\frac{2d}{2d+1}$. Taking $\overline{C}$ sufficiently large then yields the final statement of the theorem.
\end{proof}

\subsection{Law of the iterated logarithm}
We complete this section with a proof of the law of the iterated logarithm (LIL) for the Euler characteristic. We generally follow the method of \cite{lw08}, which establishes the LIL for positively dependent sums using a quantitative CLT to control tail behaviour. The main differences are that we work with a multidimensional sum and that some complications arise from the contribution of the boundary of a domain to the Euler characteristic.

Recall from the proof of Theorem~\ref{t:higher_moments} the lattice $\mathcal{L}_a=(a/2,\dots,a/2)+a\Z^d$ defined for $a>0$ and the random variables $X_i$ for $i\in\mathcal{L}_a$ defined so that for any $n\in\N$
\[
    \overline{\Phi}(\Lambda_{2na})=\sum_{i\in\mathcal{L}_a\cap\Lambda_{2na}}X_i.
\]
Throughout this subsection we fix $a=1/2$ and so abbreviate $\mathcal{L}:=\mathcal{L}_{1/2}$. Given $D\subset\R^d$ we set $S(D)=\sum_{i\in\mathcal{L}\cap D}X_i$. We recall that $\mathcal{U}$ denotes the set of blocks in $\Z^d$ of the form $[a_1,b_1]\times\dots[a_d,b_d]\cap\Z^d$.

Our first input is a version of a theorem due to M\'oricz which controls the moments of the supremum of a discrete random field:
\begin{theorem}[{\cite[Theorem~2.1.2]{bs07}}]\label{t:Moricz}
    Let $(X_j)_{j\in\Z^d}$ be a collection of random variables and for $U\in\mathcal{U}$ define $S(U)=\sum_{j\in U}X_j$. Suppose that there exists $C>0$, $p\geq 1$ and $\alpha>1$ such that
    \[
        \forall U\in\mathcal{U},\qquad\qquad \E[\lvert S(U)\rvert^p]\leq C\lvert U\rvert^\alpha,
    \]
    then there exists $C^\prime>0$ such that
    \[
        \forall U\in\mathcal{U},\qquad\qquad \E[M(U)^p]\leq C^\prime\lvert U\rvert^\alpha,
    \]
    where $M(U):=\sup_{W\subset U, W\in\mathcal{U}}\lvert S(W)\rvert$.
\end{theorem}

We next state an elementary lemma for controlling tail behaviour of a sequence using bounds in Kolmogorov distance:

\begin{lemma}\label{l:gaussian_tail}
    Let $Y_k$ be a sequence of random variables such that
    \[
        \sum_{k=1}^\infty d_\mathrm{Kol}(Y_k,\sigma Z)<\infty
    \]
    for some $\sigma>0$ and $Z\sim\mathcal{N}(0,1)$. Let $(a_k)_{k\in\N}$ be non-decreasing and positive, then
    \[
        \sum_{k=1}^\infty\P(Y_{k}>\sigma a_k)<\infty\qquad\text{if and only if}\qquad\sum_{k=1}^\infty\frac{1}{a_k}\exp\Big(-\frac{1}{2}a_k^2\Big)<\infty
    \]
\end{lemma}
\begin{proof}
    By the definition of Kolmogorov distance,
    \[
        \sum_{k=1}^\infty\big\lvert\P(Y_{k}>\sigma a_k)-\P(Z>a_k)\big\rvert\leq \sum_{k=1}^\infty d_\mathrm{Kol}(Y_k,\sigma Z)<\infty.
    \]
    A standard Gaussian tail inequality states that for any $x>0$
    \[
        \frac{1}{\sqrt{2\pi}}\frac{1}{x}\Big(1-\frac{1}{x^2}\Big)\exp(-x^2/2)\leq \P(Z>x)\leq\frac{1}{\sqrt{2\pi}}\frac{1}{x}\exp(-x^2/2).
    \]
    The equivalence follows easily from these two estimates.
\end{proof}

We will apply the above estimate to the renormalised Euler characteristic on suitable domains. For cube domains, the bounds on Kolmogorov distance will be taken from the quantitative CLT, but we also need such bounds for annular domains:

\begin{theorem}\label{t:qclt_annulus}
    Let $f$ satisfy the conditions of Theorem~\ref{t:ec_lil} and $\Phi$ be the Euler characteristic. Fixing an integer $N>2$, for $k\in\N$ we define $A_k=\Lambda_{N^k}\setminus\Lambda_{N^{k-1}+\lfloor N^{k/2}\rfloor}$, (where $\lfloor x\rfloor$ denotes the integer part of $x$) then there exists $C,\epsilon>0$ such that for all $k$
    \[
        d_\mathrm{Kol}\big(S(A_k),\sigma Z\big)\leq CN^{-\epsilon k},
    \]
    where $\sigma$ is defined in Theorem~\ref{t:var+clt} and $Z\sim\mathcal{N}(0,1)$.
\end{theorem}
\begin{proof}
    This result follows from essentially the same argument given to prove Theorem~\ref{t:qclt}, so we just highlight the points of difference. By additivity of the Euler characteristic (Lemma~\ref{l:ec_additivity}) it follows that
    \begin{equation}\label{e:ec_lil_additivity1}
        S(A_k)=\Phi(\overline{A}_k)-\Phi(\partial\Lambda_{N^{k-1}+\lfloor N^{k/2}\rfloor}).
    \end{equation}
    Working towards an additivity estimate, we can tile $A_k$ by cubes of side length $r\ll N^k$ separated at scale $a\ll r$. Denoting the centres of these mesoscopic cubes by $\chi^\prime_k$, the proof of Proposition~\ref{p:additivity} can be adjusted to yield the estimate
    \[
    \Var\bigg[\Phi(\overline{A}_k)-\sum_{x\in\chi^\prime_k}\Phi(x+\Lambda_r)\bigg]\leq Car^{-1}N^{kd}.
    \]
    By quasi-association, the variance of the last term in \eqref{e:ec_lil_additivity1} is at most $CN^{k(d-1)}$, so we deduce that
    \begin{equation}\label{e:ec_lil_additivity2}
        \Var\bigg[S(A_k)-\sum_{x\in\chi_{N,k}}\Phi(x+\Lambda_r)\bigg]\leq Car^{-1}N^{kd}.
    \end{equation}
    The result then follows from the proof of Theorem~\ref{t:qclt} after replacing $\lvert\Lambda_R\rvert$ with $\lvert A_k\rvert$ and $\chi_R$ with $\chi^\prime_k$ and using \eqref{e:ec_lil_additivity2} in place of Proposition~\ref{p:additivity}.
\end{proof}

\begin{proof}[Proof of Theorem~\ref{t:ec_lil}]
     We prove only the statement for the limit superior; the statement for the limit inferior follows from the same argument applied to $-\Phi$. It is sufficient to show that for any $\epsilon>0$, with probability one
     \begin{equation}\label{e:ec_lil1}
         \limsup_{n\to\infty}\frac{S(\Lambda_n)}{\sqrt{2n^d\log\log n}}\leq (1+\epsilon)\sigma\qquad\text{and}\qquad\limsup_{n\to\infty}\frac{S(\Lambda_n)}{\sqrt{2n^d\log\log n}}\geq (1-4\epsilon)\sigma.
     \end{equation}
     
     \underline{Upper bound:} Turning to the first of these statements, our strategy is to use the quantitative CLT to control behaviour on a (stretched-exponential) subsequence and then use M\'oricz' theorem to bound deviations on the remaining scales. Given $\epsilon\in(0,1)$ we choose $\alpha\in(0,1)$ such that $\alpha(1+\epsilon)^2>1$. We then set
    \[
        n_k=\lfloor e^{k^\alpha}\rfloor,\qquad Y_k=n_k^{-d/2}S(\Lambda_{n_k}),\qquad a_k=\sqrt{2(1+\epsilon)^2\log\log n_k}\qquad\text{and}\qquad L(n)=\sqrt{2n^d\log\log n},
    \]
    and observe that, by our choice of parameters and Theorem~\ref{t:qclt}
    \[
        \sum_{k=2}^\infty\frac{1}{a_k}\exp\Big(-\frac{1}{2}a_k^2\Big)\leq C\sum_{k=2}^\infty k^{-\alpha(1+\epsilon)^2}<\infty,\qquad\text{and}\qquad\sum_{k=1}^\infty d_\mathrm{Kol}(Y_k,\sigma Z)\leq C\sum_{k=1}^\infty n_k^\eta<\infty.
    \]
    Lemma~\ref{l:gaussian_tail} then yields
    \[
        \sum_{k=2}^\infty\P(S(\Lambda_{n_k})>(1+\epsilon)\sigma L(n_k))<\infty,
    \]
    and so by Borel-Cantelli, with probability one,
    \begin{equation}\label{e:ec_lil2}
        \limsup_{k\to\infty}\frac{S(\Lambda_{n_k})}{L(n_k)}\leq(1+\epsilon)\sigma.
    \end{equation}
    To control behaviour on the remaining subsequence, we consider $M_k:=\sup_{n_k\leq n<n_{k+1}}\lvert S(\Lambda_n)-S(\Lambda_{n_k})\rvert$. We partition $\mathcal{L}\cap\Lambda_{n_{k+1}}\setminus\Lambda_{n_k}$ into blocks $B_1,\dots,B_J\in\mathcal{U}$ such that $J$ depends only on the dimension $d$ and for each $i$, $\lvert B_i\rvert\leq Cn_{k+1}^{d-1}(n_{k+1}-n_k)$. (One way of constructing such a partition is to use the hyperplanes passing through the faces of $\Lambda_{n_k}$ to separate $\Lambda_{n_{k+1}}\setminus\Lambda_{n_k}$.) It then follows that
    \[
        M_k\leq\sum_{i=1}^JM(B_i)
    \]
    where we recall that $M(B_i)=\sup_{W\subset B_i,W\in\mathcal{U
    }}\lvert S(W)\rvert$. Then applying Theorem~\ref{t:Moricz} to this expression (which is justified by Remark~\ref{r:ec_moments}) we conclude that for any even integer $p>2$
    \[
        \E[M_k^p]\leq Cn_{k+1}^\frac{p(d-1)}{2}(n_{k+1}-n_k)^{\frac{p}{2}}.
    \]
    Choosing $p>2/(1-\alpha)$, we have
    \[
        \sum_{k=2}^\infty\frac{\E[M_k^p]}{L(n_k)^p}\leq C\sum_{k=2}^\infty\Big(\frac{n_{k+1}}{n_k}\Big)^\frac{p(d-1)}{2}\Big(\frac{n_{k+1}-n_k}{n_k}\Big)^\frac{p}{2}\frac{1}{(\log\log n_k)^{p/2}}\leq C\sum_{k=2}^\infty k^{-\frac{p(1-\alpha)}{2}}\frac{1}{(\log k)^{p/2}}<\infty,
    \]
    where we have used the facts that
    \[
        \sup_{k\in\N}\frac{n_{k+1}}{n_k}<\infty\qquad\text{and}\qquad\frac{e^{(k+1)^\alpha}-e^{k^\alpha}}{e^{k^\alpha}}\leq Ck^{-(1-\alpha)}
    \]
    with the latter following from the mean-value theorem. By Borel-Cantelli, $M_k/L(n_k)\to 0$ almost surely as $k\to\infty$. Combined with \eqref{e:ec_lil2} we deduce the first inequality in \eqref{e:ec_lil1}.

    \underline{Lower bound:} For some integer $N>2$, we set $m_k=N^k-N^{k-1}-\lfloor N^{k/2}\rfloor$,
    \[
        E_k(\epsilon)=\big\{S(A_k)>(1-\epsilon)\sigma L(m_k)\big\}\qquad\text{and}\qquad E_k^\prime=\Big\{S\big(\Lambda_{N^{k-1}+\lfloor N^{k/2}\rfloor}\big)>-2\sigma L\big(N^{k-1}+\lfloor N^{k/2}\rfloor\big)\Big\}.
    \]
    Given $\epsilon>0$, if $N$ is fixed sufficiently large, then for all $k\in\N$, the event $E_k(3\epsilon)\cap E_k^\prime$ implies
    \[
        S(\Lambda_{N^k})>(1-4\epsilon)\sigma L(N^k).
    \]
    From the proof of the first part of the theorem (applied to $-\Phi$), there exists almost surely a random $k_0$ such that $E_k^\prime$ holds for all $k>k_0$. Hence the second part of \eqref{e:ec_lil1} will be proven if we can show that
    \[
        \P(E_k(3\epsilon)\;\mathrm{i.o.})=1.
    \]
    To this end, for each $k$ we choose a Lipschitz function $F_k:\R\to[0,1]$ such that
    \[
        F_k(x)=\begin{cases}
            0 &\text{if }x<(1-3\epsilon)\sigma L(m_k)\\
            1 &\text{if }x>(1-2\epsilon)\sigma L(m_k),
        \end{cases}
        \qquad\text{and}\qquad\sup_{k}\lVert F_k\rVert_\mathrm{Lip}<\infty.
    \]
    It is therefore sufficient to show that $\sum_{k=1}^\infty F_k(S(A_k))=\infty$ almost surely. Letting $W_k=F_k(S(A_k))$, by the triangle inequality, Markov's inequality and the bounds $0\leq F_k\leq 1$, for any $n\in\N$
    \begin{align*}
        \P\bigg(\sum_{k=1}^\infty W_k\leq\frac{1}{2}\sum_{k=1}^n\E[W_k]\bigg)&\leq\P\bigg(\bigg\lvert\sum_{k=1}^nW_k-\E[W_k]\bigg\rvert\geq\frac{1}{2}\sum_{k=1}^n\E[W_k]\bigg)\\
        &\leq \frac{4\Var\big[\sum_{k=1}^nW_k\big]}{\big(\sum_{k=1}^n\E[W_k]\big)^2}\leq \frac{4}{\sum_{k=1}^n\E[W_k]}+\frac{8\sum_{k=1}^n\sum_{j=k+1}^n\lvert\Cov[W_k,W_j]\rvert}{\big(\sum_{k=1}^n\E[W_k]\big)^2}.
    \end{align*}
    From this bound, the theorem will follow if we can show that
    \begin{equation}\label{e:ec_lil3}
        \sum_{k=1}^\infty\sum_{j=k+1}^\infty\lvert\Cov[W_k,W_j]\rvert<\infty\qquad\text{and}\qquad\sum_{k=1}^\infty\E[W_k]=\infty.
    \end{equation}
    By quasi-association (Corollary~\ref{c:quasi_association}), the fact that the $F_k$ are uniformly Lipschitz and our bound on the covariance function,
    \[
        \sum_{j=k+1}^\infty\lvert\Cov[W_k,W_j]\rvert\leq C\int_{\Lambda_{N^k+2}\times\R^d\setminus\Lambda_{N^k+\lfloor N^{k/2}\rfloor-2}}\widetilde{K}(x-y)\;dxdy\leq CN^{kd-(\beta-d)k/2}.
    \]
    By assumption we may choose $\beta$ sufficiently large to ensure that this is summable over $k$, verifying the first part of \eqref{e:ec_lil3}.

    Finally, setting
    \[
        a_k=\sqrt{2(1-2\epsilon)^2\log\log m_k}\qquad\text{and}\qquad Y_k=\lvert A_k\rvert^{-1/2}S(A_k),
    \]
    for $k_0$ sufficiently large, we have
    \begin{align*}
        \sum_{k=k_0}^\infty a_k^{-1}\exp\big(-(1/2)a_k^2\big)\geq \sum_{k=k_0}^\infty\exp\big(-(1-\epsilon)^2\log\log m_k\big)\geq\sum_{k=k_0}^\infty (k\log N)^{-(1-\epsilon)^2}=\infty.
    \end{align*}
    By Theorem~\ref{t:qclt_annulus}, $\sum_kd_\mathrm{Kol}(Y_k,\sigma Z)<\infty$ and so we may apply Lemma~\ref{l:gaussian_tail} to deduce that
    \[
        \sum_{k=1}^\infty\P(E_k(2\epsilon))=\sum_{k=1}^\infty\P(Y_k>\sigma a_k)=\infty.
    \]
    From our definition of $F_k$, we have $F_k(S(A_k))\geq\ind_{E_k(2\epsilon)}$. Combining these final two observations verifies the second part of \eqref{e:ec_lil3}, completing the proof of the theorem.
\end{proof}

\section{Volume of the unbounded component}
In this section we turn our attention to the final non-local functional that we consider: the volume of the unbounded component of the excursion set. We will prove the limiting results in Theorem~\ref{t:vol} by showing that the functional is associated and appealing to established results for (discrete) associated fields.

We recall that for a field $f$, level $\ell\in\R$ and bounded Borel set $A\subset\R^d$, $\{f\geq\ell\}_\infty$ denotes the union of all unbounded components of $\{f\geq\ell\}$ and $\Vol_\infty[A]$ denotes the $d$-dimensional Lebesgue measure of $A\cap\{f\geq\ell\}_\infty$. For $v\in\Z^d$, we then define
\[
    V_v=\Vol_\infty(v/2+[0,1/2]^d).
\]
We note that this slightly unusual indexing allows us to tile $\Lambda_n=[-n/2,n/2]^d$ using cubes of side length $1/2$ while also appealing to textbook results stated for processes indexed by $\Z^d$.

\begin{lemma}\label{l:assoc}
    Let $f$ be a $C^2$ Gaussian field with non-negative covariance function and $\ell\in\R$, then $(V_v)_{v\in\Z^d}$ is associated (as defined at the start of Section~\ref{ss:proof_outline}).
\end{lemma}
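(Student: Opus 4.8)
The plan is to deduce association of $(V_v)_{v\in\Z^d}$ from two classical inputs: Pitt's theorem that a Gaussian vector with non-negative correlations is associated \cite{pit82}, and the standard fact that association is preserved under taking (coordinate-wise) non-decreasing measurable functions and monotone limits. First I would observe that $\Vol_\infty(A)$ is a coordinate-wise non-decreasing functional of $f$: increasing $f$ pointwise can only enlarge $\{f\geq\ell\}$, and enlarging $\{f\geq\ell\}$ can only enlarge the union of its unbounded components (a point in an unbounded component of the smaller set lies in an unbounded component of the larger set), hence can only increase $|A\cap\{f\geq\ell\}_\infty|$. So each $V_v$ is a monotone functional of $f$.

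The technical issue is that $V_v$ is a functional of the whole field $f$ on $\R^d$, not of finitely many coordinates, whereas Pitt's theorem and the elementary closure properties of association are stated for finite-dimensional vectors. To bridge this, I would approximate $\Vol_\infty$ by quantities depending on $f$ restricted to a finite grid and a bounded window, and pass to the limit. Concretely, fix a large $N$, let $\delta=2^{-m}$, and approximate $V_v$ by
\begin{displaymath}
    V_v^{(N,m)}:=\delta^d\sum_{w}\ind\big\{w\in\tfrac12 v+[0,\tfrac12)^d,\ w\overset{\{f\geq\ell\}\cap\Lambda_N}{\longleftrightarrow}\partial\Lambda_N,\ f(w)\geq\ell\big\}
\end{displaymath}
where $w$ ranges over $\delta\Z^d$. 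Here $w\overset{\{f\geq\ell\}\cap\Lambda_N}{\longleftrightarrow}\partial\Lambda_N$ is the discrete-grid analogue of being connected within $\Lambda_N$ to the boundary along sites where $f\geq\ell$; this event is a non-decreasing function of the finite vector $(f(w))_{w\in\delta\Z^d\cap\Lambda_N}$, so each $V_v^{(N,m)}$ is a non-decreasing function of that finite Gaussian vector, which is associated by Pitt's theorem. Hence $(V_v^{(N,m)})_v$ is associated for every fixed $N,m$. Then I would let $m\to\infty$ (the grid sums converge, by continuity of $f$ and regularity of excursion sets under Assumption~\ref{a:minimal}, to the continuum volume of the component of $\{f\geq\ell\}\cap\Lambda_N$ reaching $\partial\Lambda_N$, almost surely, hence in $L^1$ after truncation since all quantities are bounded by $\delta^d\cdot(\text{number of grid points})\le 1$), and then $N\to\infty$, using monotone convergence: the volume reaching $\partial\Lambda_N$ decreases as $N$ increases and converges to $\Vol_\infty(\tfrac12 v+[0,\tfrac12]^d)=V_v$ by definition of $\{f\geq\ell\}_\infty$. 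Since association passes to $L^1$ (equivalently, a.s.) limits, $(V_v)_{v\in\Z^d}$ is associated.

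The main obstacle I anticipate is the discretisation step: making sure the finite-grid connectivity event really is monotone in the sampled values and really converges to the continuum event $\{w\in \text{component of }\{f\geq\ell\}\cap\Lambda_N\text{ meeting }\partial\Lambda_N\}$ almost surely, uniformly enough to get $L^1$ convergence of the approximating volumes. For this one uses that, under Assumption~\ref{a:minimal}, $f$ is almost surely $C^2$ (indeed $C^4$) and $\ell$ is almost surely a regular value on any compact set by Bulinskaya's lemma, so $\{f\geq\ell\}$ is a nice manifold-with-boundary and its connectivity structure in $\Lambda_N$ is detected by a sufficiently fine grid; combined with the uniform bound $0\le V_v^{(N,m)}\le \tfrac12^d$ this gives dominated/monotone convergence. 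An alternative, cleaner route that avoids discretisation entirely is to invoke the result of \cite{pit82} in the form already used in the literature for Gaussian \emph{processes} (not just vectors), namely that a $C^0$ Gaussian field with non-negative covariance is associated as a process, and then note that $\Vol_\infty$ is a monotone measurable functional of the sample path; association of monotone functionals of an associated process is standard \cite{bs07}. I would present the proof in the latter, shorter form, citing \cite{pit82} and \cite{bs07}, and relegate the monotonicity observation about $\Vol_\infty$ — the only genuinely new point — to a single paragraph as above.
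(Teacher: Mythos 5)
Your proposal is correct and follows essentially the same route as the paper: Pitt's theorem for non-negatively correlated Gaussians, monotonicity of the unbounded-component volume, and an $L^1$ approximation of $V_v$ by non-decreasing functions of finitely many field values (volume connected to $\partial\Lambda_N$, computed on a fine grid), followed by the limits $\delta\to 0$ and $N\to\infty$. The paper's proof is exactly this discretisation argument, with the same appeal to almost-sure regularity of the excursion set to justify convergence of the grid approximation.
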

\begin{proof}
    It was shown by Pitt \cite{pit82} that a collection of Gaussian variables is associated if (and only if) all of their correlations are non-negative. Hence, by assumption, $(f(x))_{x\in\R^d}$ is associated. It follows from the definition of association that any collection of measurable, coordinate-wise non-decreasing functions that depend on the field at finitely many points is also associated. Therefore we need only show that each $V_v$ can be approximated (say, in $L^1$) by a coordinate-wise non-decreasing function of $f$ which depends on only finitely many points.
    
    To see such an approximation, first note that $V_v$ is the limit (in $L^1$) as $R\to\infty$ of the volume of all excursion components in $v/2+[0,1/2]^d$ that are connected to $\partial\Lambda_R$ in $\{f\geq\ell\}$. For any fixed $R$, this volume can be approximated by considering the analogous quantity for a discretisation of $f$. That is, one restricts $f$ to $\epsilon\Z^d\cap\Lambda_{R+1}$, says that if nearest neighbours are both above/below $\ell$ then they belong to the same component of the excursion set and approximates the volume of a component by $\epsilon^{-d}$ times the number of points in that component. Since the level sets of $f$ are $C^2$-smooth almost surely, this discretised estimate will converge to the requisite volume as $\epsilon\searrow 0$.
\end{proof}

Many results follow for discrete associated fields provided that correlations decay sufficiently quickly. In our case, this can be proven using the covariance formula for topological events. We recall that for $x\in\R^d$, $E_x:=\{x\in\{f\geq\ell\}_\infty\}$.

\begin{proposition}\label{p:vol_cov}
    Let $f$ and $\ell<\ell_c$ satisfy Assumptions~\aref{a:basic}, \aref{a:cov_pos}, \aref{a:trunc_arm_decay} and either (1)~\aref{a:cov_decay_pol}, or (2)~\aref{a:cov_decay_exp}. Then there exists $C,c>0$ such that for all $x,y\in\Z^d$
    \[
        0\leq\Cov[E_x,E_y]\leq
        \begin{cases}
            C(1+\lvert x-y\rvert)^{-\beta}(\log(1+\lvert x-y\rvert))^{2d} &\text{in case (1),}\\
            C\exp(-c\lvert x-y\rvert) &\text{in case (2).}
        \end{cases}
    \]
    Moreover the same bounds apply to $\Cov[V_x,V_y]$ whenever $x,y\in\Z^d$.
\end{proposition}

\begin{proof}
    Given $x\in\R^d$ and $R\geq 1$, we define
    \[
        E_x(R):=\Big\{x\overset{\{f\geq\ell\}}{\longleftrightarrow}x+\partial \Lambda_R\Big\}.
    \]
    Since $E_x$ is an increasing event with respect to $f$, which is associated, and $E_x\subset E_x(R)$, for any $R>0$ we have
    \begin{equation}\label{e:vol_cov1}
    \begin{aligned}
        0\leq\Cov[E_x,E_y]&=\Cov[E_x(R),E_y(R)]-\Cov[E_x(R)\setminus E_x,E_y(R)]-\Cov[E_x,E_y(R)\setminus E_y]\\
        &\leq \Cov[E_x(R),E_y(R)]+2\P(E_0(R)\setminus E_0).
    \end{aligned}
    \end{equation}
    Applying Assumption~\aref{a:trunc_arm_decay}, we have
    \begin{equation}\label{e:vol_cov2}
        \P(E_0(R)\setminus E_0)\leq Ce^{-cR}.
    \end{equation}

    We now assume $R<\lvert x-y\rvert/3$ and define $\mathcal{F}_R(x)$ to be the stratification of $x+\Lambda_R$ consisting of $\{x\}$, the interior of $\Lambda_R\setminus\{x\}$ and the open boundary faces of $\Lambda_R$ of all dimensions. The event $E_x(R)$ is a topological event with respect to this stratification (i.e., it is a measurable function of the random stratified isotopy class of $\{f\geq\ell\}$ with respect to this stratification). Therefore by the covariance formula (Theorem~\ref{t:cov_formula_events})
    \[
        \Cov[E_x(R),E_y(R)]=\int_{\mathcal{F}_R(x)\times\mathcal{F}_R(y)}K(w-z)\int_0^1\mu^t_{w,z}(d_w\ind_{E_x(R)}d_z^t\ind_{E_y(R)})\;dtdwdz.
    \]
    By definition, for any event $A$ and $u\in\R^d$ $\lvert d_u\ind_A\rvert\leq 1$ and so using the bound on pivotal measures (Proposition~\ref{p:intensity_integrable})
    \begin{equation}\label{e:vol_cov3}
        \Cov[E_x(R),E_y(R)]\leq C\int_{\mathcal{F}_R(x)\times\mathcal{F}_R(y)}\lvert K(w-z)\rvert \;dwdz\leq C R^{2d}\sup_{\lvert z\rvert>\lvert x-y\rvert-2R}\lvert K(z)\rvert.
    \end{equation}
    In the case that Assumption~\aref{a:cov_decay_pol} holds, we set $R=\overline{C}\log\lvert x-y\rvert$ for a sufficiently large constant $\overline{C}$, while in the case that Assumption~\aref{a:cov_decay_exp} holds, we set $R=\lvert x-y\rvert/4$. Then combining \eqref{e:vol_cov1}-\eqref{e:vol_cov3} we see that for $\lvert x-y\rvert$ sufficiently large
    \[
        \Cov[E_x,E_y]\leq \begin{cases}
            C\lvert x-y\rvert^{-\beta}(\log\lvert x-y\rvert)^{2d} &\text{in case (1),}\\
            C\exp(-c\lvert x-y\rvert) &\text{in case (2).}
        \end{cases}
    \]
    Increasing $C$, if necessary, proves the first statement of the proposition for all $x,y\in\R^d$.

    Now suppose that $v,w\in\Z^d$. By linearity
    \[
        \Cov[V_v,V_w]=\Cov\Big[\int_{v/2+[0,1/2]^d}\ind_{E_x}\;dx,\int_{w/2+[0,1/2]^d}\ind_{E_y}\;dy\Big]=\int_{v/2+[0,1/2]^d}\int_{w/2+[0,1/2]^d}\Cov[E_x,E_y]\;dxdy.
    \]
    Combining this with the first statement completes the proof of the proposition.
\end{proof}

We may now prove our limit theorems for the volume of the unbounded component:

\begin{proof}[Proof of Theorem~\ref{t:vol}]
    We first consider the asymptotic behaviour of the variance. By the definition of $\sigma^2$ in \eqref{e:vol_sigma}, stationarity of $f$ and Proposition~\ref{p:vol_cov}, for any $\epsilon>0$
    \[
        0\leq R^d\sigma^2-\Var[\Vol_\infty(\Lambda_R,\ell)]=\int_{\Lambda_R\times\R^d\setminus\Lambda_R}\Cov[E_x,E_y]\;dxdy\leq \int_{\Lambda_R\times\R^d\setminus\Lambda_R}C(1+\lvert x-y\rvert)^{-\beta+\epsilon}\;dxdy.
    \]
    Since $\beta>d$, choosing $\epsilon>0$ sufficiently small, this is bounded by
    \begin{align*}
        \int_{\Lambda_R\setminus\Lambda_{R-1}}\int_{\R^d}C(1+\lvert x-y\rvert^{-\beta+\epsilon}\;dxdy+\int_{\Lambda_{R-1}}C(R-\|x\|_\infty)^{-\beta+d+\epsilon}\;dx\leq C\big(R^{d-1}+R^{-\beta+2d+\epsilon}\big)
    \end{align*}
    where the bound for the second integral above can be derived as in the proof of Proposition~\ref{p:q_stabilisation}. This shows that for any $\epsilon>0$, as $R\to\infty$
    \[
        R^{-d}\Var[\Vol_\infty(\Lambda_R,\ell)]=\sigma^2+O\big(R^{-\min\{1,\beta-d-\epsilon\}}\big)
    \]
    establishing (a slightly stronger version of) the second statement of Theorem~\ref{t:vol}.
    
    We now show positivity of $\sigma$. By Proposition~\ref{p:vol_cov} and the assumption that $\ell<\ell_c$, for any $x\in\R^d$
    \[
        \Cov[E_x,E_0]\geq 0\qquad\text{and}\qquad\Cov[E_0,E_0]=\theta(\ell)(1-\theta(\ell))>0.
    \]
    Hence by the definition of $\sigma$, positivity will follow if we can show that $x\mapsto\Cov[E_x,E_0]$ is continuous. For $x,y\in\R^d$,
    \[
        \lvert\Cov[E_x,E_0]-\Cov[E_y,E_0]\rvert\leq\lvert\Cov[E_x\Delta E_y,E_0]\rvert\leq 2\P(E_x\Delta E_y)
    \]
    where $\Delta$ denotes the symmetric difference operator. On $E_x\Delta E_y$, either $x$ is contained in an unbounded component of $\{f\geq\ell\}$ and $y$ is not, or vice versa. In either case, the line segment from $x$ to $y$ must contain some value $z$ for which $f(z)=\ell$. Hence $\P(E_x\Delta E_y)$ is bounded above by the expected number of zeroes of $f-\ell$ on the line segment from $x$ to $y$. By stationarity of $f$ and the Kac-Rice formula, the latter quantity is at most $C\lvert x-y\rvert$ for some $C>0$. Combining these observations with the previous equation establishes continuity of $x\mapsto\Cov[E_x,E_0]$ and hence positivity of $\sigma$.

    Turning to the limiting distribution; since the random field $(V_w)_{w\in\Z^d}$ is stationary, associated (Lemma~\ref{l:assoc}) and satisfies the `finite susceptibility' condition
    \[
        \sum_{w\in\Z^d}\Cov[V_w,V_0]\leq\sum_{w\in\Z^d}C(1+\lvert w\rvert)^{-\beta+\epsilon}<\infty,
    \]
    Newman's CLT (Theorem~2 of \cite{new80}) states that as $n\to\infty$
    \begin{equation}\label{e:vol_clt1}
        \frac{\Vol_\infty(\Lambda_n,\ell)-\theta(\ell)n^d}{n^{d/2}}=\frac{\sum_{w\in\Z^d\cap[-n,n)^d}V_w-\E[V_w]}{n^{d/2}}\overset{d}{\longrightarrow}\sigma Z
    \end{equation}
    where $Z\sim\mathcal{N}(0,1)$. It remains to show that the convergence also holds for non-integer valued box-lengths $R$. Letting $\widetilde{\Vol}_\infty(\Lambda_R):=R^{-d/2}\overline{\Vol}_\infty(\Lambda_R)$ and recalling that $\overline{X}:=X-\E[X]$, we observe that
    \begin{equation}\label{e:vol_clt2}
        \widetilde{\Vol}_\infty(\Lambda_R)-\widetilde{\Vol}_\infty(\Lambda_{\lfloor R\rfloor})=\frac{\overline{\Vol}_\infty(\Lambda_R\setminus\Lambda_{\lfloor R\rfloor})}{\lfloor R\rfloor^{d/2}}+(1-(R/\lfloor R\rfloor)^{d/2})\frac{\overline{\Vol}_\infty(\Lambda_{R})}{R^{d/2}}.
    \end{equation}
    The variance convergence result previously established implies that the second term here converges to zero in $L^2$ as $R\to\infty$. Then by linearity and Proposition~\ref{p:vol_cov}
    \begin{align*}
        \Var\big[\lfloor R\rfloor^{-d/2}\overline{\Vol}_\infty(\Lambda_R\setminus\Lambda_{\lfloor R\rfloor})\big]&=\lfloor R\rfloor^{-d}\int_{(\Lambda_R\setminus\Lambda_{\lfloor R\rfloor})^2}\Cov[E_x,E_y]\;dxdy\\
        &\leq \lfloor R\rfloor^{-d}\lvert \Lambda_R\setminus\Lambda_{\lfloor R\rfloor}\rvert\int_{\R^d}C(1+\lvert x\rvert)^{-\beta+\epsilon}\;dx\to 0.
    \end{align*}
    Hence \eqref{e:vol_clt2} converges to zero in probability, and so applying the continuous mapping theorem to \eqref{e:vol_clt1} verifies the central limit theorem on non-integer scales.

    By stationarity and Proposition~\ref{p:vol_cov}, for any $\epsilon>0$ and $k\geq 1$
    \begin{equation}\label{e:cox_grimmett}
        \sup_{w\in\Z^d}\sum_{v\in\Z^d\;:\;\lvert v-w\rvert\geq k }\lvert\Cov[V_v,V_w]\rvert\leq C_\epsilon k^{-\beta+d+\epsilon}.
    \end{equation}
    (The left-hand expression here is known as the `Cox-Grimmett' coefficient.) Theorem~3 of \cite{bul96} bounds the rate of convergence in the CLT for discrete, uniformly bounded, associated fields in terms of the Cox-Grimmett coefficient. Applying this to $\Vol_\infty(\Lambda_n)=\sum_{v\in\Z^d\cap[-n,n)^d}V_v$, it yields for any $\delta>0$
    \[
        d_\mathrm{Kol}\Big(\frac{\overline{\Vol}_\infty(\Lambda_n,\ell)}{\sqrt{\Var[\Vol_\infty(\Lambda_n,\ell)]}},Z\Big)\leq C_\delta n^d\Var[\Vol_\infty(\Lambda_n)]^{-3/2+\frac{d}{\lambda+2d}+\delta}
    \]
    where $\lambda$ is the rate of decay of the Cox-Grimmett coefficient. By the previously established variance convergence, for $n$ sufficiently large $\Vol_\infty(\Lambda_n)\geq (\sigma/2)n^d$. Then setting $\lambda=\beta-d-\epsilon$ for sufficiently small $\epsilon$ proves the first statement of the quantitative CLT in Theorem~\ref{t:vol}. If we impose the stronger Assumption~\aref{a:cov_decay_exp}, then by Proposition~\ref{p:vol_cov} the Cox-Grimmett coefficient decays exponentially. In this case, Theorem~4 of \cite{bul96} yields the improved bound in the second statement of the quantitative CLT.
\end{proof}

\bigskip
\printbibliography

\end{document}